\documentclass[hidelinks,onefignum,onetabnum,oneside]{siamart251216}

\usepackage{mathtools}
\usepackage{amsmath,amssymb,amsfonts}
\usepackage{bm}

\usepackage{hyperref}
\usepackage{cleveref}
\usepackage{cancel}
\usepackage{array}
\usepackage{subcaption}
\usepackage{standalone}
\usepackage{tikz}
\usetikzlibrary{spy,shapes,arrows,positioning,calc,decorations.markings}
\usepackage{pgfplots}
\pgfplotsset{compat=1.18}
\usepgfplotslibrary{colorbrewer}
\usepackage{pgfplotstable}
\usepackage{booktabs}
\usepackage{colortbl}
\usepackage{makecell}
\usepackage{enumitem}
\newlist{identities}{enumerate}{1}
\setlist[identities,1]{
  label={\textbf{ID \arabic*}},
  ref=\arabic*, 
  wide,itemsep=0pt,topsep=0pt}
\creflabelformat{identitiesi}{#2\textup{#1}#3} 
\crefname{identitiesi}{ID}{IDs} 
\Crefname{identitiesi}{ID}{IDs}

\usepackage{lipsum}
\usepackage{graphicx}
\usepackage{epstopdf}
\usepackage{amsopn}
\usepackage{matlab-prettifier}

\usepackage{algorithm,algpseudocode}

\newcommand*\circled[1]{\tikz[baseline=(char.base)]{\node(char)[shape=rounded rectangle,draw,inner sep=0.6pt,minimum height=1.5ex]{#1};}} %
\newcommand\kronF[2]{{#1}^{\circled{\tiny{\ensuremath{#2}}}}} 
\renewcommand\Vec[1][]{\textnormal{vec}\ensuremath{\if$#1$ \else \left[#1\right]\fi}}

\newcommand{\real}{\mathbb{R}}

\newcommand{\rd}{\text{\upshape d}} 

\newcommand{\bzero}{\ensuremath{\mathbf{0}}} 

\newcommand{\bA}{\ensuremath{\mathbf{A}}}
\newcommand{\bB}{\ensuremath{\mathbf{B}}}
\newcommand{\bC}{\ensuremath{\mathbf{C}}}
\newcommand{\bD}{\ensuremath{\mathbf{D}}}

\newcommand{\bF}{\ensuremath{\mathbf{F}}}
\newcommand{\bG}{\ensuremath{\mathbf{G}}}
\newcommand{\bH}{\ensuremath{\mathbf{H}}}
\newcommand{\bI}{\ensuremath{\mathbf{I}}}
\newcommand{\bJ}{\ensuremath{\mathbf{J}}}
\newcommand{\bK}{\ensuremath{\mathbf{K}}}

\newcommand{\bM}{\ensuremath{\mathbf{M}}}

\newcommand{\bP}{\ensuremath{\mathbf{P}}}

\newcommand{\bT}{\ensuremath{\mathbf{T}}}

\renewcommand{\bf}{\ensuremath{\mathbf{f}}}
\newcommand{\bg}{\ensuremath{\mathbf{g}}}
\newcommand{\bh}{\ensuremath{\mathbf{h}}}

\newcommand{\bp}{\ensuremath{\mathbf{p}}}
\newcommand{\bq}{\ensuremath{\mathbf{q}}}

\newcommand{\bu}{\ensuremath{\mathbf{u}}}
\newcommand{\bv}{\ensuremath{\mathbf{v}}}
\newcommand{\bw}{\ensuremath{\mathbf{w}}}
\newcommand{\bx}{\ensuremath{\mathbf{x}}}
\newcommand{\by}{\ensuremath{\mathbf{y}}}
\newcommand{\bz}{\ensuremath{\mathbf{z}}}

\newcommand{\bPsi}{\mbox{\boldmath $\Psi$}}

\newcommand{\bPhi}{\mbox{\boldmath $\Phi$}}
\newcommand{\bvarphi}{\mbox{\boldmath $\varphi$}}

\newcommand{\bsigma}{\ensuremath{\boldsymbol{\sigma}}}

\newcommand{\cA}{\ensuremath{\mathcal{A}}}

\newcommand{\cE}{\ensuremath{\mathcal{E}}}

\newcommand{\cL}{\ensuremath{\mathcal{L}}}
\newcommand{\cM}{\ensuremath{\mathcal{M}}}

\newcommand{\cP}{\ensuremath{\mathcal{P}}}

\newcommand{\cT}{\ensuremath{\mathcal{T}}}

\newcommand{\cW}{\ensuremath{\mathcal{W}}}

\def\cool#1{\textcolor{blue}{#1}}

\usepackage{changepage}
\usepackage{mparhack}
\strictpagecheck 

\newsiamremark{remark}{Remark}
\newsiamremark{hypothesis}{Hypothesis}
\crefname{hypothesis}{Hypothesis}{Hypotheses}
\newsiamthm{claim}{Claim}
\newsiamremark{fact}{Fact}
\crefname{fact}{Fact}{Facts}

\ifpdf
  \DeclareGraphicsExtensions{.eps,.pdf,.png,.jpg}
\else
  \DeclareGraphicsExtensions{.eps}
\fi

\headers{Nonlinear Balanced Truncation}{N. A. Corbin and B. Kramer}

\title{Nonlinear balanced truncation model reduction through scalable Taylor series\thanks{Submitted to the editors April 23rd, 2026.
    \funding{This work was funded by AFOSR award no.~FA9550-24-1-0237.}}}

\author{Nicholas A. Corbin \and Boris Kramer\thanks{Department of Mechanical and Aerospace Engineering, University of California San Diego, La Jolla, CA
    (\email{ncorbin@ucsd.edu}, \email{bmkramer@ucsd.edu}, \url{https://kramer.ucsd.edu/}).}}

\graphicspath{{./figures/}}

\renewenvironment{quotation}
  {\list{}{\listparindent 0em%
    \itemindent    \listparindent
    \leftmargin    .5cm  
    \rightmargin   .5cm} 
    \item\relax}
  {\endlist}
\newcounter{algsubstate}
\renewcommand{\thealgsubstate}{\alph{algsubstate}}
\newenvironment{algsubstates}
  {\setcounter{algsubstate}{0}%
   \renewcommand{\State}{%
     \stepcounter{algsubstate}%
     \Statex {\footnotesize\thealgsubstate:}\space}}
  {}

\ifpdf
\hypersetup{
  pdftitle={Computation of Reduced-Order Models via Nonlinear Balanced Truncation},
  pdfauthor={N. A. Corbin and B. Kramer}
}
\fi

\begin{document}

\maketitle

\begin{abstract}
  The theory of nonlinear balanced truncation provides a system-theoretic framework for model reduction that preserves important properties such as stability, controllability, and observability.
  We present a scalable algorithm for computing reduced-order models
  based on the nonlinear balancing theory.
  The approach is based on polynomial approximations
  using the Kronecker product representation, building on recent numerical linear algebra advancements to enable scalability.
  We derive polynomial approximations for the balancing transformation and the explicit balanced realization of the full-order model, which yields true nonlinear reduced-order models upon truncation of redundant state components.
  The proposed tools are tested on various examples, demonstrating a nuanced perspective of the benefits and limitations of nonlinear balancing not shown in the existing literature.
\end{abstract}

\begin{keywords}
  Nonlinear balanced truncation, model order reduction, polynomial approximations
\end{keywords}

\begin{MSCcodes}
  93B11, 93B20, 93A15
\end{MSCcodes}

\section{Introduction}
With the advent of modern control theory in the 1960s by the pioneering works of Kalman \cite{Kalman1960,Kalman1960a,Kalman1960b,Kalman1960c,Kalman1960d,Kalman1961}, who recognized the importance of the preceding works of Lyapunov \cite{Lyapunov1892} nearly a hundred years prior,
the treatment of control systems underwent a shift from analysis and design in the frequency domain, characterized by the transfer function, to analysis and design in the time domain, characterized by the state-space representation.
Despite the strengths of classical techniques, modern techniques enable a more straightforward treatment of certain classes of systems, most famously multi-input/multi-output systems and nonlinear systems.
There is, however, a peculiarity regarding the time-domain approach: while the external description via the transfer function is unique, the internal description via a state-space representation is not.
Not only that: it is possible to have state-space realizations of differing orders that nonetheless share the same transfer function.
In subsequent papers \cite{Kalman1963a,Kalman1965}, Kalman addressed this issue with the topic of minimal realization theory, through which he introduced many of the foundational concepts still used to describe control systems, such as controllability and observability.
The introduction of minimal realizations, or ``irreducible realizations'' as Kalman originally called them, planted the seed for what would eventually form the field of model order reduction.
Almost twenty years later, in 1981, Moore formalized the theory of balanced truncation for linear time-invariant systems \cite{Moore1981}.
The opening of Moore's paper is particularly salient:
\begin{quotation}
  ``\textit{A common and quite legitimate complaint directed toward multivariable control literature is that the apparent strength of the theory is not accompanied by strong numerical tools. Kalman's minimal realization theory, for example, offers a beautifully clear picture of the structure of linear systems. Practically every linear systems text gives a discussion of controllability, observability, and minimal realization. The associated textbook algorithms are far from satisfactory, however, serving mainly to illustrate the theory with textbook examples.}''
\end{quotation}
Moore elaborated that the unsatisfactory nature of minimal realization theory and algorithms at the time stemmed from a rigid, binary perspective in which
model order was identified
as the number of non-zero singular values of the Hankel matrix.
Moore recognized that,
especially in terms of implementation on digital computers with finite precision,
it is not just \textit{zero} singular values, but also \textit{near zero} singular values, that indicate reducibility.
Moore's procedure for constructing a minimal realization by ``balancing'' the controllability and observability energies and then ``truncating'' the Hankel singular values below a specified tolerance, rather than just those that are zero,
became known as balanced truncation.

Following Moore's work, many other contributions were made to generalize and refine the balancing theory and algorithms for linear systems.
Notable contributions include the square-root balancing algorithm \cite{Laub1987,Tombs1987}, which enabled removing Moore's assumption that the full-order model already be minimal, and closed-loop balancing formulations \cite{Jonckheere1983,Mustafa1991}, which addressed unstable systems.
In 1993, Scherpen extended the balancing theory to nonlinear control-affine systems \cite{Scherpen1993,Scherpen1994a}.
Since then, the \textit{theory} of balancing for nonlinear systems has been studied thoroughly, with subsequent publications uncovering connections between the Hankel operator \cite{Gray1998}, minimality \cite{Gray2000}, different definitions of singular value functions \cite{Gray2001}, and their implications on nonlinear systems theory.
The issue of the development of \textit{numerical tools} to implement nonlinear balancing also arose at this time;
however, unlike the linear theory, which requires
the solutions of algebraic matrix equations for which solvers are readily available,
the nonlinear theory involves the solution of
notoriously challenging
nonlinear Hamilton-Jacobi-Bellman partial differential equations (HJB PDEs)\footnote{The complexity of solving HJB PDEs has even stimulated the development of various alternative balancing theories \cite{Condon2005,Gray2006,Besselink2014,Kawano2017,Benner2024} and of course many data-driven approaches \cite{Lall2002,Bouvrie2017} in order to entirely avoid solving these equations.}.
Efforts have been made towards the development of computational methods for nonlinear balancing, including discretized Monte Carlo-based optimization techniques \cite{Newman1998,Newman1999} and Taylor series-based methods \cite{Fujimoto2008a,Krener2008}.
The latter in particular showed promise on low-dimensional examples, the highest-order example shown being the triple pendulum of state dimension $n=6$ in \cite{Krener2008}.
However, the lack of an open-source, scalable, general purpose implementation of nonlinear balanced truncation for reproducibility and application to other higher-dimensional systems has left much to be desired.

Three main challenges must be overcome to derive a nonlinear balanced truncation model.
First, solutions must be computed to the HJB PDEs for the controllability and observability energy functions.
Second, a nonlinear coordinate transformation must be computed that puts the system in input-normal/output-diagonal form.
Third, an additional scaling transformation must be computed, after which the balanced realization and
the construction of the reduced-order model (ROM) must be numerically implemented.
The primary contribution of this work is a thorough treatment of this third challenge based on the Kronecker product representation of polynomials which provides the first scalable computation of its kind, building on our previous works \cite{Corbin2025a,Corbin2025b} which address the first two challenges.
The result is a scalable, general-purpose computational approach for nonlinear balancing applicable to a variety of problems.
Our open-source $\textsc{Matlab}$ software package is available at\footnote{\cool{The code will be archived in a Zenodo repository upon final submission}}
\begin{center}
  \texttt{\url{https://github.com/cnick1/NLbalancing}}
\end{center}

The paper is organized as follows.
We summarize the relevant portions of the nonlinear balancing theory in \cref{sec:nlbt-theory}.
Our main results are in \cref{sec:polynomial-balancing},
in which we derive the explicit formulae for polynomial approximations to the balancing transformation, the dynamics in the balanced realization, and an explicit reduced-order model.
We present several numerical examples in \cref{sec:examples}
to provide a clear perspective of the performance of the method, and the conclusions follow in
\cref{sec:conclusions}.

\section{Nonlinear balancing theory and Kronecker product preliminaries}\label{sec:nlbt-theory}
Consider the nonlinear control-affine dynamical system
\begin{equation}\label{eq:FOM-NL}
  \begin{split}
    \dot{\bx}(t) & = \bf(\bx(t))  + \bg(\bx(t)) \bu(t), \qquad \bx(0) = \bx_0 \in \real^n, \\
    \by(t)       & = \bh(\bx(t)),
  \end{split}
\end{equation}
where $\bx(t) \in \real^n$ is the state
and
$\bf \colon \real^n \mapsto \real^n$, $\bg \colon \real^n \mapsto \real^{n \times m}$, and $\bh \colon \real^n \mapsto \real^{p}$
represent the nonlinear drift, input, and output mappings.
The inputs to the system are $\bu(t) \in \real^m$, and the outputs are given by $\by(t) \in \real^p$.
We henceforth refer to the dynamical system \cref{eq:FOM-NL} as the full-order model (FOM).

We wish to derive a ROM approximation to \cref{eq:FOM-NL} as
\begin{equation}\label{eq:ROM-NL}
  \begin{split}
    \dot{\bx}_r(t) & = \bf_r(\bx_r(t))  + \bg_r(\bx_r(t)) \bu(t), \qquad \bx_r(0) = {\bx_r}_0  \in \real^r, \\
    \by(t)         & = \bh_r(\bx_r(t)),
  \end{split}
\end{equation}
where $\bx_r(t) \in \real^r$, $\bf_r:\real^r \mapsto \real^r$, $\bg_r:\real^r \mapsto \real^{r\times m}$, and $\bh_r:\real^r \mapsto \real^p$,
that preserves properties such as stability, controllability, observability \cite[Thm. 10]{Fujimoto2010}, and the input/output behavior of the FOM.
Scherpen's nonlinear balancing theory \cite{Scherpen1993,Fujimoto2010} achieves this via a nonlinear coordinate transformation, described in \cref{sec:balancing-transformation}, which puts the system in a balanced realization so that the reduced-order dynamics can be obtained via truncation of certain state components, as described in \cref{sec:balanced-realization}.
Next, we provide an abridged overview of the relevant theory to construct this truncated approximation; the interested reader should look to \cite{Fujimoto2010} and the previous works cited therein for a more thorough discussion of the theory.

\subsection{Observability and controllability energy functions}
The controllability and observability energy functions for an asymptotically stable nonlinear system \cref{eq:FOM-NL} are defined in \cite{Scherpen1993} as
\begin{align*}
  \cE_c(\bx_0) & \coloneqq
  \!\!\!\! \min_{\substack{\bu \in L_{2}
      (-\infty, 0)
  \\ \bx(-\infty) = \bzero,\,\,   \bx(0) = \bx_0}}
  \!\!
  \frac{1}{2} \int\displaylimits_{-\infty}^{0} \Vert \bu(t) \Vert^2 \rd t,
  \hspace{-2cm}
  \\
  \cE_o(\bx_0) & \coloneqq \frac{1}{2} \int\displaylimits_{0}^{\infty} \Vert \by(t) \Vert^2  \rd t,
  \begin{split}
     & \qquad\bx(0) = \bx_0, \\&\bu(t) \equiv \bzero, \,\,0 \leq t < \infty.
  \end{split}
\end{align*}
We assume that these functions exist, are finite\footnote{These assumptions are equivalent to assuming controllability and observability.},
and are smooth in a neighborhood of the origin.
Under the assumption that $0$ is an asymptotically stable equilibrium of $-\left(\bf(\bx)+\bg(\bx)\bg(\bx)^\top\frac{\partial^\top\cE_c(\bx)}{\partial \bx}\right)$,
the controllability energy
$\cE_c(\bx)$
is the unique solution to the
Hamilton-Jacobi equation
\begin{equation}\label{eq:HJ-Equation}
  \begin{split}
     & 0 =  \frac{\partial \cE_c(\bx)}{\partial \bx} \bf(\bx) + \frac{1}{2}  \frac{\partial \cE_c(\bx)}{\partial \bx} \bg(\bx) \bg(\bx)^\top \frac{\partial^\top \cE_c(\bx)}{\partial \bx},
    \qquad \cE_c(\bzero) = 0,
  \end{split}
\end{equation}
and the observability energy $\cE_o(\bx)$ is the unique solution to the nonlinear Lyapunov-like equation
\begin{equation}\label{eq:Nonlin-Lyap}
  \begin{split}
     & 0 =  \frac{\partial \cE_o(\bx)}{\partial \bx} \bf(\bx) + \frac{1}{2}\bh(\bx)^\top \bh(\bx),
    \qquad \cE_o(\bzero) = 0.
  \end{split}
\end{equation}
In \cite{Corbin2025a}, we developed solutions to these PDEs via series expansions using scalable tensor-based algorithms for models with state dimensions as high as $n=1080$.

\subsection{Balancing transformation}\label{sec:balancing-transformation}
In order to facilitate model order reduction,
the nonlinear balancing theory describes how to achieve a Kalman-like decomposition via a nonlinear coordinate transformation.
This enables truncating
the least observable/controllable states as determined by the magnitude of the singular value functions.
Similar to the linear balancing theory, the nonlinear balancing transformation is constructed as the composition of two transformations: the input-normal/output-diagonal transformation, and a subsequent scaling transformation.
The theory for these two transformations is described next.
\begin{theorem}\cite[Thm. 8]{Fujimoto2010} \label{thm:inputnormaloutputdiagonal}
  Suppose the Jacobian linearization of the nonlinear system \cref{eq:FOM-NL} is minimal, asymptotically stable, and has distinct Hankel singular values.
  Then there is a neighborhood $\cW$ of the origin and a smooth coordinate transformation $\bx = \bPhi(\bz)$ on $\cW$ with $\bz = [z_1, z_2, \dots, z_n]$ such that the controllability and observability energy functions have \emph{input-normal/output-diagonal} form:
  \begin{align}
    \cE_c(\bPhi(\bz)) & = \frac{1}{2} \sum_{i=1}^n  z_i^2,
                      &
    \cE_o(\bPhi(\bz)) & = \frac{1}{2} \sum_{i=1}^n  z_i^2 \sigma_i^2(z_i). \label{eq:in-od-definition}
  \end{align}
\end{theorem}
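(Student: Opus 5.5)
We will build $\bPhi$ as a composition of three smooth coordinate changes, following Scherpen's construction as refined by Fujimoto and Scherpen. Each step will normalize more of the structure of the pair $(\cE_c,\cE_o)$.

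\textbf{Step 1: input-normalize $\cE_c$ by a Morse-lemma argument.} By assumption the linearization is minimal and asymptotically stable. Hence $\cE_c$ is smooth near $\bzero$, with $\cE_c(\bx)=\tfrac12\bx^\top \bP^{-1}\bx+O(\Vert\bx\Vert^3)$, where $\bP\succ 0$ is the controllability Gramian. Similarly $\cE_o(\bx)=\tfrac12\bx^\top\bM\bx+O(\Vert\bx\Vert^3)$, where $\bM\succ0$ is the observability Gramian. Since $\bzero$ is a nondegenerate minimum of $\cE_c$, the Morse lemma gives a local diffeomorphism $\bx=\bPhi_1(\bar\bx)$ with $\cE_c(\bPhi_1(\bar\bx))=\tfrac12\Vert\bar\bx\Vert^2$. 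Its linear part can be chosen so that it also diagonalizes the linear part of $\cE_o$, by taking the linear input-normal/output-diagonal balancing transformation. In the new coordinates the observability energy is $\tilde\cE_o(\bar\bx)=\tfrac12\bar\bx^\top\mathrm{diag}(\sigma_1^2,\dots,\sigma_n^2)\bar\bx+O(\Vert\bar\bx\Vert^3)$, where $\sigma_i$ are the Hankel singular values.

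\textbf{Step 2: diagonalize $\tilde\cE_o$ on spheres while preserving $\cE_c$.} Write $\tilde\cE_o(\bar\bx)=\tfrac12\bar\bx^\top \bJ(\bar\bx)\bar\bx$ with $\bJ$ smooth and symmetric and $\bJ(\bzero)=\mathrm{diag}(\sigma_i^2)$; Hadamard's lemma applied twice supplies such a $\bJ$. Because the $\sigma_i$ are distinct, the eigenvalues $\tau_i(\bar\bx)$ of $\bJ(\bar\bx)$ stay distinct near the origin. Consequently the eigenvectors depend smoothly on $\bar\bx$ and give an orthogonal matrix field $\bT(\bar\bx)$. Set $\bar\bx=\bT(\bar\bx)^\top\hat\bx$ and invert this locally by the implicit function theorem. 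Since $\bT$ is orthogonal, $\Vert\bar\bx\Vert=\Vert\hat\bx\Vert$, so $\cE_c$ stays $\tfrac12\Vert\hat\bx\Vert^2$. Meanwhile $\tilde\cE_o=\tfrac12\sum_i \hat x_i^2\,\tau_i(\hat\bx)$, but each $\tau_i$ still depends on all the coordinates.

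\textbf{Step 3: make $\sigma_i$ depend only on $z_i$.} This is the main obstacle. The plan is the Fujimoto–Scherpen approach of working with the restrictions of $\cE_o$ to the level sets of $\cE_c$. On the sphere $\tfrac12\Vert\hat\bx\Vert^2=c$, I would study the critical points of $\tilde\cE_o$. This amounts to the nonlinear eigenvalue problem
\begin{equation*}
\frac{\partial\tilde\cE_o}{\partial\hat\bx}=\lambda\,\hat\bx^\top,
\end{equation*}
whose solutions near each axis form smooth curves $\hat\bx=\xi_i(s)$ with $\xi_i(s)=s\,\mathbf e_i+O(s^2)$. These curves exist by the implicit function theorem, using the distinctness of the $\sigma_i$; nondegeneracy follows because the linearized Hessians restricted to the sphere have simple spectra. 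Along each curve define the singular value function $\sigma_i(s)$ by $\tilde\cE_o(\xi_i(s))=\tfrac12 s^2\sigma_i^2(s)$.

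I would then construct a further $\cE_c$-preserving transformation, a smooth family of rotations, that maps each curve $\xi_i$ onto the $z_i$ axis, and that aligns the remaining critical directions by induction on the dimension. This is done by restricting to the invariant submanifolds orthogonal to the curves already straightened, and using the distinctness hypothesis again at each stage. Showing that the resulting $\cE_o$ takes exactly the separated form $\tfrac12\sum z_i^2\sigma_i^2(z_i)$, and not merely agreement along the axes, is the delicate point. In the original theorem the conclusion in this exact form rests on the axis-wise characterization of the singular value functions, so I would verify the identity \cref{eq:in-od-definition} by expanding in the normal form and checking term by term against the critical-point equations. The resulting composition $\bPhi$ is smooth on a neighborhood $\cW$, shrunk finitely many times.
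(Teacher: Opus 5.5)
The paper gives no proof of this statement. It is quoted from \cite{Fujimoto2010} and serves only as the basis for the polynomial approximation of $\bPhi$ taken from the authors' earlier work. Judged on its own, your proposal has a genuine gap, exactly at the ``delicate point'' you flag.

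Steps 1 and 2 are fine, but they only reproduce Scherpen's original form $\cE_o=\frac12\sum_i z_i^2\tau_i(\bz)$. In Step 3, straightening the critical curves $\xi_i$ of $\cE_o$ on the spheres $\cE_c=c$ onto the coordinate axes controls $\cE_o$ only \emph{on the axes}. You obtain $\cE_o(0,\dots,0,z_i,0,\dots,0)=\frac12 z_i^2\sigma_i^2(z_i)$, and the axes are critical on every sphere. The separated form \cref{eq:in-od-definition}, however, is a condition on an open set, and the critical-point equations cannot see terms that vanish to second order along every axis.

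Here is a concrete counterexample. Take $n=2$, $\cE_c=\frac12(z_1^2+z_2^2)$ and $\cE_o=\frac12(\sigma_1^2z_1^2+\sigma_2^2z_2^2)+z_1^2z_2^2$. The axes are exactly the critical curves on every circle, so all of your Steps 1--3 can be taken to be the identity. Yet $\cE_o$ is not of the form $\frac12\sum_i z_i^2\sigma_i^2(z_i)$. In general your construction stops at $\cE_o=\frac12\sum_i z_i^2\sigma_i^2(z_i)+(\text{mixed terms})$; for $n=2$ this remainder is $z_1^2z_2^2\,d(\bz)$. No term-by-term comparison with the critical-point equations can remove it. Your induction over coordinate subspaces has the same defect, since it again constrains $\cE_o$ only on lower-dimensional sets.

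What is missing is a further $\cE_c$-preserving change of variables that removes the mixed terms, together with the fact that its homological equation is solvable. Take $\bz=\exp(\mathbf S(\boldsymbol\zeta))\boldsymbol\zeta$ with $\mathbf S$ skew-symmetric and entries homogeneous of degree $k-1\geq 1$. This preserves $\Vert\bz\Vert$ exactly and leaves all terms of degree at most $k$ unchanged. It changes the degree-$(k+1)$ part of $\cE_o$ only by
\[
  \boldsymbol\zeta^\top\mathrm{diag}(\sigma_1^2,\dots,\sigma_n^2)\,\mathbf S(\boldsymbol\zeta)\,\boldsymbol\zeta=\sum_{i<j}(\sigma_i^2-\sigma_j^2)\,S_{ij}(\boldsymbol\zeta)\,\zeta_i\zeta_j .
\]
Because the Hankel singular values are distinct, these corrections span exactly the degree-$(k+1)$ monomials involving at least two different variables. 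The pure powers $\zeta_i^{k+1}$ are left unchanged; they become the Taylor coefficients of $\frac12 z_i^2\sigma_i^2(z_i)$. Inducting on $k$ gives the separated form to every order. This is where distinctness is really needed, not merely for the existence of the curves $\xi_i$.

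To pass from this formal statement to an actual smooth $\bPhi$ on a neighborhood $\cW$, you still need an extra argument. One option is a Borel realization followed by removal of a flat remainder through a Moser-type path argument on the level sets of $\cE_c$, with care for smoothness at the origin. Your straightening of the $\xi_i$, with $\sigma_i$ defined by the exact critical values, is a useful preliminary step there.
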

The input-normal/output-diagonal transformation shifts all of the observability and controllability information into the observability energy function in the transformed coordinates.
In this way, it transforms the observability and controllability information, which are coordinate dependent quantities, into the system invariant-like\footnote{For linear systems these are unique; for nonlinear systems, uniqueness breaks down \cite{Corbin2025b}.} quantities $\sigma_i(z_i)$ known as the singular value functions.
In \cite{Corbin2025b}, we expand on the approach proposed in \cite{Corbin2025a} with a procedure for computing the input-normal/output-diagonal transformation via polynomial approximation.
The input-normal/output-diagonal transformation is followed by a scaling transformation based on the singular value functions in order to put the system in the balanced form, as described next.
\begin{theorem}\cite[Thm. 9]{Fujimoto2010} \label{thm:balancing-transformation}
  Suppose the Jacobian linearization of the nonlinear system \cref{eq:FOM-NL} is minimal, asymptotically stable, and has distinct Hankel singular values.
  Then there is a neighborhood $\cW$ of the origin and a smooth coordinate transformation $\bx = \bar\bPhi(\bar\bz)$ on $\cW$ with $\bar\bz = [\bar z_1, \bar z_2, \dots, \bar z_n]$ such that the controllability and observability energy functions have \emph{balanced} form:
  \begin{align}
    \cE_c(\bar\bPhi(\bar\bz)) & = \frac{1}{2} \sum_{i=1}^n  \frac{\bar z_i^2}{\bar\sigma_i(\bar z_i)},
                              &
    \cE_o(\bar\bPhi(\bar\bz)) & = \frac{1}{2} \sum_{i=1}^n  \bar z_i^2 \bar\sigma_i(\bar z_i). \label{eq:balanced-definition}
  \end{align}
\end{theorem}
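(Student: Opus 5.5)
The plan is to build the balancing transformation on top of \cref{thm:inputnormaloutputdiagonal}, composing it with a componentwise scaling, exactly as in the linear case where balanced coordinates are obtained from input-normal ones by scaling with $\sigma_i^{1/2}$. Since the hypotheses coincide with those of \cref{thm:inputnormaloutputdiagonal}, we first obtain a neighborhood $\cW$ and a smooth $\bx = \bPhi(\bz)$ giving the form \cref{eq:in-od-definition}. Then we look for $\bz = \bPsi(\bar\bz)$ of the decoupled form $z_i = \psi_i(\bar z_i)$, $i=1,\dots,n$, and set $\bar\bPhi = \bPhi\circ\bPsi$. Because each component depends on one variable only, the energies in the new coordinates remain sums of one-dimensional terms:
\begin{equation*}
  \cE_c(\bar\bPhi(\bar\bz)) = \frac{1}{2}\sum_{i=1}^n \psi_i(\bar z_i)^2, \qquad
  \cE_o(\bar\bPhi(\bar\bz)) = \frac{1}{2}\sum_{i=1}^n \psi_i(\bar z_i)^2\sigma_i^2(\psi_i(\bar z_i)).
\end{equation*}

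Matching this with \cref{eq:balanced-definition} term by term, we need a function $\bar\sigma_i$ with $\psi_i(\bar z_i)^2 = \bar z_i^2/\bar\sigma_i(\bar z_i)$ and $\psi_i(\bar z_i)^2\sigma_i^2(\psi_i(\bar z_i)) = \bar z_i^2\bar\sigma_i(\bar z_i)$. Multiplying the two relations eliminates $\bar\sigma_i$ and gives the scalar condition $\bar z_i^2 = \psi_i(\bar z_i)^2\sigma_i(\psi_i(\bar z_i))$. Taking the sign-preserving square root, it is enough to solve
\begin{equation*}
  \bar z_i = \psi_i(\bar z_i)\sqrt{\sigma_i(\psi_i(\bar z_i))}.
\end{equation*}
Equivalently, $\psi_i$ is the inverse of $\chi_i(z_i) \coloneqq z_i\sqrt{\sigma_i(z_i)}$. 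Once $\psi_i$ is found, we define $\bar\sigma_i(\bar z_i) \coloneqq \sigma_i(\psi_i(\bar z_i))$. Both balanced identities then follow directly: the first from $\psi_i^2 = \bar z_i^2/\sigma_i(\psi_i)$, and the second from multiplying that identity by $\sigma_i^2(\psi_i)$.

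The main obstacle is showing that $\chi_i$ is a smooth local diffeomorphism near $0$. Three facts are needed, and I will check them in order.
\begin{itemize}
  \item $\sigma_i$ is smooth near $0$. This should come from the smoothness of $\cE_o\circ\bPhi$ together with the diagonal structure: $z_i^2\sigma_i^2(z_i)$ is smooth and even-order vanishing, so $\sigma_i^2$ is smooth.
  \item $\sigma_i(0) > 0$. At the origin, $\sigma_i(0)$ coincides with the $i$th Hankel singular value of the Jacobian linearization, which is positive by minimality.
  \item Smoothness of the square root. Since $\sigma_i$ is positive near $0$, $\sqrt{\sigma_i}$ is smooth there.
\end{itemize}
With these in hand, $\chi_i'(0) = \sqrt{\sigma_i(0)} \neq 0$, so the one-dimensional inverse function theorem gives a smooth inverse $\psi_i$ on a neighborhood of $0$.

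Finally, we shrink $\cW$ so that the product of the intervals where each $\psi_i$ is defined is mapped by $\bPsi$ into the domain of $\bPhi$. Then $\bar\bPhi = \bPhi\circ\bPsi$ is a smooth coordinate transformation on this smaller neighborhood of the origin. The distinct Hankel singular values are used only through \cref{thm:inputnormaloutputdiagonal}, which guarantees that the decomposition into the $\sigma_i$ is well defined.
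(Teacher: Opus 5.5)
Your proposal is correct and follows the same route as the paper. The paper takes this argument from the proof of \cite[Thm.~9]{Fujimoto2010}, summarized in \cref{cor:scaling-1}: compose the input-normal/output-diagonal map $\bPhi$ with the decoupled scaling whose inverse is $\bar z_i = z_i\sqrt{\sigma_i(z_i)}$, which is locally invertible because $\sigma_i(0)>0$, and set $\bar\sigma_i(\bar z_i)=\sigma_i(z_i)$.
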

The full balancing transformation is the composition of the input-normal/output-diagonal and scaling transformations.
To reiterate, this coordinate transformation is important because it provides a partitioning for the dynamics that facilitates model order reduction by truncating the least observable/controllable states.
The next corollary gives the explicit form for the balancing transformation, which is presented in the proof of \cite[Thm. 9]{Fujimoto2010}.
\begin{corollary}\cite{Fujimoto2010}\label{cor:scaling-1}
  Consider the input-normal/output-diagonal transformation described in \cref{thm:inputnormaloutputdiagonal}, along with the scaling transformation $\bz = \bvarphi(\bar\bz)$ given by
  \begin{align*}
    \varphi_i(\bar \bz) = \frac{1}{\sqrt{\bar\sigma_i (\bar z_i)}} \bar z_i \quad (= z_i), \qquad\qquad
    \varphi_i^{-1}(\bz) = z_i \sqrt{\sigma_i (z_i)} \quad (= \bar z_i),
  \end{align*}
  where $\bar\sigma_i (\bar z_i) = \sigma_i ( z_i)$.
  The balancing transformation $\bar{\bPhi} = \bPhi \circ \bvarphi: \real^n \mapsto \real^n$ is given by the composition of the input-normal/output-diagonal transformation and the scaling transformation as
  $
    \bx = \bar\bPhi(\bar\bz) = \bPhi(\bvarphi(\bar\bz)).
  $
\end{corollary}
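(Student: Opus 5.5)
Starting from the input-normal/output-diagonal form of \cref{thm:inputnormaloutputdiagonal}, I would check that composing with the scaling map $\bvarphi$ turns \cref{eq:in-od-definition} into \cref{eq:balanced-definition}. The first step is to show $\bvarphi$ is a well-defined local diffeomorphism. Under the hypotheses (minimal, asymptotically stable Jacobian linearization with distinct Hankel singular values), the singular value functions satisfy $\sigma_i(0)=\sigma_i>0$, the linear Hankel singular values, and are smooth near the origin. Hence the scalar map $\psi_i(z_i)\coloneqq z_i\sqrt{\sigma_i(z_i)}$ is smooth on a neighborhood of $0$, with $\psi_i(0)=0$ and $\psi_i'(0)=\sqrt{\sigma_i(0)}>0$. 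By the inverse function theorem, applied separately to each scalar map, $\psi_i$ has a smooth local inverse. The map $\bz\mapsto\bar\bz$ acts componentwise, so its inverse $\bvarphi$ is also componentwise, $\varphi_i(\bar\bz)=\psi_i^{-1}(\bar z_i)$, and depends only on $\bar z_i$.

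Next I would define $\bar\sigma_i(\bar z_i)\coloneqq\sigma_i(\psi_i^{-1}(\bar z_i))=\sigma_i(z_i)$. This is a smooth, positive function of $\bar z_i$ alone near $0$. The relation $\bar z_i=z_i\sqrt{\sigma_i(z_i)}$ then rearranges to $z_i=\bar z_i/\sqrt{\bar\sigma_i(\bar z_i)}$, which is exactly the stated formula for $\varphi_i$. Shrinking $\cW$ to the intersection of the domains so that $\bvarphi$ maps into the neighborhood from \cref{thm:inputnormaloutputdiagonal}, the composition $\bar\bPhi=\bPhi\circ\bvarphi$ is a smooth local diffeomorphism.

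The final step is direct substitution into \cref{eq:in-od-definition}. For the controllability energy, $\cE_c(\bar\bPhi(\bar\bz))=\frac12\sum_i z_i^2=\frac12\sum_i \bar z_i^2/\bar\sigma_i(\bar z_i)$. For the observability energy, $\cE_o(\bar\bPhi(\bar\bz))=\frac12\sum_i z_i^2\sigma_i^2(z_i)=\frac12\sum_i \frac{\bar z_i^2}{\bar\sigma_i}\bar\sigma_i^2=\frac12\sum_i\bar z_i^2\bar\sigma_i(\bar z_i)$. This is \cref{eq:balanced-definition}.

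The main obstacle is the first step: justifying smoothness and positivity of $\sigma_i$ at the origin, so that the square root is smooth and $\psi_i'(0)\neq0$. I would argue this from the linearization. The quadratic parts of the energy functions are governed by the Gramians, and in input-normal/output-diagonal coordinates they reduce to $\frac12\sum z_i^2$ and $\frac12\sum\sigma_i^2 z_i^2$ with $\sigma_i>0$, by minimality. Smoothness of the $\sigma_i(z_i)$ can be taken from the construction in \cref{thm:inputnormaloutputdiagonal}, where $z_i^2\sigma_i^2(z_i)$ is a smooth even-order expansion beginning at $\sigma_i^2 z_i^2$.
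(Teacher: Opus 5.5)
Your proposal is correct and follows essentially the argument the paper defers to, namely the proof of \cite[Thm.~9]{Fujimoto2010}: use $\sigma_i(0)>0$ to invert $z_i\mapsto z_i\sqrt{\sigma_i(z_i)}$ locally, define $\bar\sigma_i(\bar z_i)\coloneqq\sigma_i(z_i)$, and substitute into \cref{eq:in-od-definition} to obtain \cref{eq:balanced-definition}. One harmless slip is that $z_i^2\sigma_i^2(z_i)$ is not an even-order expansion in general, since odd terms such as $c_1^{(i)}z_i^3$ appear (cf.\ \cref{eq:squared-singular-value-functions}); all you need is that it starts at order two, and this also follows from Hadamard's lemma applied to $t\mapsto 2\cE_o(\bPhi(t\mathbf{e}_i))=t^2\sigma_i^2(t)$.
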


\subsection{Balanced realization and model order reduction by truncation}\label{sec:balanced-realization}
Inserting the coordinate transformation $\bx = \bar\bPhi(\bar\bz)$ into the FOM~\cref{eq:FOM-NL},
and assuming
that the Jacobian of the balancing transformation is invertible, the balanced realization can be written in control-affine form
in the transformed coordinates $\bar{\bz}$ as
\begin{equation}\label{eq:FOM-balanced}
  \begin{split}
    \dot{\bar\bz} & = \bar\bf(\bar\bz) + \bar\bg(\bar\bz) \bu(t), \qquad \bar\bz(0) = \bar\bz_0 = \bar\bPhi^{-1}(\bx_0) \in \real^n,
    \\
    \by           & = \bar\bh(\bar\bz),
  \end{split}
\end{equation}
where
\begin{align}\label{eq:balanced-realization-operators}
  \bar\bf(\bar\bz) & \coloneqq \left[\frac{\partial \bar\bPhi(\bar\bz)}{\partial \bar\bz}\right]^{-1}  \bf(\bar\bPhi(\bar\bz)),
                   &
  \bar\bg(\bar\bz) & \coloneqq \left[\frac{\partial \bar\bPhi(\bar\bz)}{\partial \bar\bz}\right]^{-1} \bg(\bar\bPhi(\bar\bz)),
                   &
  \bar\bh(\bar\bz) & \coloneqq  \bh(\bar\bPhi(\bar\bz)).
\end{align}
\Cref{eq:FOM-balanced} represents the balanced model, but the state dimension still matches that of the FOM.
To obtain a ROM, the balancing procedure is followed by a truncation of unimportant states.
The states and model operators in the balanced realization can be partitioned as
\begin{align*}
  \bar\bz = \begin{bmatrix}
              \bar\bz^a \\
              \bar\bz^b
            \end{bmatrix} \in \real^n, \qquad
  \bar\bz_0 = \begin{bmatrix}
                \bar\bz_0^a \\
                \bar\bz_0^b
              \end{bmatrix},  \qquad
  \bar\bf(\bar\bz) = \begin{bmatrix}
                       \bar\bf^a(\bar\bz) \\
                       \bar\bf^b(\bar\bz)
                     \end{bmatrix}, \qquad \bar\bg(\bar\bz) = \begin{bmatrix}
                                                                \bar\bg^a(\bar\bz) \\
                                                                \bar\bg^b(\bar\bz)
                                                              \end{bmatrix},
\end{align*}
after which a choice can be made to truncate states corresponding to small singular value functions $\bar\sigma_i(\bar z_i)$.
The truncated model is given by the dynamics of the leading states $\bar\bz^a \in \real^r$ as
\begin{align}\label{eq:ROM}
  \dot{\bar\bz}^a & = \bar\bf^a\left(\begin{bmatrix}
                                         \bar\bz^a \\
                                         \bzero
                                       \end{bmatrix}\right) + \bar\bg^a\left(\begin{bmatrix}
                                                                               \bar\bz^a \\
                                                                               \bzero
                                                                             \end{bmatrix}\right) \bu(t) ,
                  &
  \by             & = \bar\bh\left(\begin{bmatrix}
                                       \bar\bz^a \\
                                       \bzero
                                     \end{bmatrix}\right),
\end{align}
with the initial condition $\bar\bz^a(0) = \bar\bz_0^a$.
The remaining $n-r$ states were truncated by setting $\bar\bz^b \equiv \bzero$.
\Cref{eq:ROM} represents the ROM, yet the model operators $\bar\bf^a : \real^n \mapsto \real^r$, $\bar\bg^a : \real^n \mapsto \real^{r\times m}$, and $\bar\bh : \real^n \mapsto \real^p$ operate on vectors of the full-order dimension $n$.
Defining $\bx_r \coloneqq \bar\bz^a$,
$\bf_r(\bx_r) \coloneqq \bar\bf^a\left(
  [\bx_r^\top \, \bzero^\top]^\top
  \right)$,
$\bg_r(\bx_r) \coloneqq \bar\bg^a\left(
  [\bx_r^\top \, \bzero^\top]^\top
  \right)$,
$\bh_r(\bx_r) \coloneqq \bar\bh\left(
  [\bx_r^\top \, \bzero^\top]^\top
  \right)$,
and
${\bx_r}_0 \coloneqq \bar\bz_0^a$,
the nonlinear balanced ROM can be written more compactly
in the form \cref{eq:ROM-NL},
an $r$-dimensional approximation to the corresponding $n$-dimensional FOM \cref{eq:FOM-NL}.
The challenge is therefore to obtain the explicit and efficient to evaluate expressions for
the functions $\bf_r:\real^r \mapsto \real^r$, $\bg_r:\real^r \mapsto \real^{r\times m}$, and $\bh_r:\real^r \mapsto \real^p$, and to compute the reduced-order initial condition ${\bx_r}_0 \in \real^r$ in \cref{eq:ROM-NL}.

\subsection{Kronecker product notation}\label{sec:kronecker-preliminaries}
In the interest of developing scalable algorithms, we continue the precedent set by our previous works \cite{Corbin2024a,Corbin2024e,Corbin2025a,Corbin2025b} and related works of \cite{Breiten2018,Borggaard2020,Borggaard2021,Kramer2024} in adopting the Kronecker product representation of polynomials \cite[Sect. 1.3.6 \& 12.3]{Golub2013}.
The Kronecker product of two matrices $\bA \in \real^{p \times q}$ and $\bB \in \real^{s \times t}$ is the $ps \times qt$ block matrix
\begin{align*}
  \bA \otimes \bB \coloneqq \begin{bmatrix} a_{11}\bB & \cdots & a_{1q}\bB \\
                \vdots    & \ddots & \vdots    \\
                a_{p1}\bB & \cdots & a_{pq}\bB
                            \end{bmatrix},
\end{align*}
where $a_{ij}$ denotes the $(i,j)$th entry of $\bA$.
We denote $k$-times repeated Kronecker products as $\kronF{(\cdot)}{k}$,
so that
for a variable $\bx \in \real^n$,
the vector $\kronF{\bx}{k} \coloneqq \bx \otimes \dots \otimes \bx \in \real^{n^k}$
compactly\footnote{The Kronecker approach introduces some redundancy in representing terms like $x_1 x_2 = x_2 x_1$, though these symmetries can be handled efficiently in computations.} represents all monomials of degree $k$.
Kronecker product algebra is a rich topic with a wide range of practical applications in numerical methods; some important properties include $(\bA \otimes \bB)^\top = \bA^\top \otimes \bB^\top$ and $(\bA \otimes \bB)^{-1} = \bA^{-1} \otimes \bB^{-1}$, but notably in general $\bA \otimes \bB \neq \bB \otimes \bA$.
For more details on Kronecker product computations, we refer the reader to \cite[Sect. 1.3.6 \& 12.3]{Golub2013}, and further information can be found in \cite{VanLoan2000,Brewer1978,Henderson1981,Magnus2019}.
Relevant to this work, we include a few special notations based on the Kronecker product, following the convention set in \cite{Borggaard2020,Kramer2023}.
\begin{definition}\label{def:k-way-Lyapunov-matrix}
  For $\bA \in \real^{p \times q}$, the
  \emph{$k$-way Lyapunov matrix} is defined as
  \begin{equation*}
    \cL_k(\bA) \coloneqq \sum_{i=1}^k\underbrace{\bI_p \otimes \bA \otimes \bI_p \otimes \dots \otimes \bI_p}_{\textup{$k$ factors, $\bA$ in the $i$th position}} \in \real^{p^k \times p^{k-1}q}.
  \end{equation*}
\end{definition}
\begin{definition}\label{def:tensor-sum}
  For a set of $\bT_i \in \real^{n \times m^i}$,
  we denote the sum of all unique tensor products with $p$ factors and $m^q$ columns as
  \begin{equation*}
    \cT_{p,q} \coloneqq \sum_{\sum{i_j}=q} \bT_{i_1}\otimes \dots \otimes \bT_{i_p} \in \real^{n^p\times m^q}.
  \end{equation*}
\end{definition}
For example, if the set of coefficients is given by $\bP_i \in \real^{n \times n^i}$, the notation yields $\cP_{3,3} = \bP_1 \otimes \bP_1 \otimes \bP_1 = \kronF{\bP_1}{3} \in \real^{n^3 \times n^3}$, and for a set $\bar\bT_i \in \real^{n \times r^i}$, the notation yields $\bar\cT_{2,3} = \bar\bT_1 \otimes \bar\bT_2 + \bar\bT_2 \otimes \bar\bT_1 \in \real^{n^2 \times r^3}$.

The $k$-way Lyapunov structure often arises when taking derivatives of functions in Kronecker polynomial form, whereas the sum of unique tensor products arises when evaluating the composition of two Kronecker polynomials.
Since the composition of Kronecker polynomials is central to the present work, we present the following Lemma describing the explicit form for the coefficients of the result of composing two Kronecker polynomials,
as worked out in \cite{Corbin2025b}.
\begin{lemma}\label{lem:poly-transformation}
  Let $\bp : \real^n \mapsto \real^n$
  and $\bPhi: \real^r \mapsto \real^n$
  be vector-valued polynomial functions, which we write as
  \begin{align*}
    \bp(\bx)          & =  \sum_{i=1}^\infty \bP_i \kronF{\bx}{i},  &
    \bx  = \bPhi(\bz) & = \sum_{i=1}^\infty \bT_{i} \kronF{\bz}{i},
  \end{align*}
  where $\bP_i \in \real^{n \times n^i}$ and
  $\bT_i \in \real^{n \times r^i}$.
  Then the composition of the two polynomials, $\bp(\bPhi(\bz))$, can be written as
  \begin{align*}
    \bp(\bPhi(\bz)) & =  \sum_{i=1}^\infty \tilde\bP_i \kronF{\bz}{i}
    , \quad \text{where } \quad \tilde\bP_i  =  \sum_{j=1}^i  \bP_j \cT_{j,i},
  \end{align*}
  where
  $\cT_{m,\ell}$
  is the sum of all unique tensor products of $\bT_i$ with $m$ factors and $r^\ell$ columns of \cref{def:tensor-sum}.
\end{lemma}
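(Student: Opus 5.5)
The plan is to substitute the expansion of $\bPhi$ directly into each term $\bP_j \kronF{\bx}{j}$ of $\bp$, expand the Kronecker power with the mixed-product property, and then collect terms by total degree in $\bz$. Everything is treated as formal power series, or as polynomials when the sums are finite, so the rearrangements below are legitimate. Where convergence matters, absolute convergence near the origin justifies the same steps.

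\textbf{Step 1 (expand the Kronecker power).} For fixed $j$, write
\begin{equation*}
  \kronF{\bx}{j} = \Bigl(\sum_{i_1} \bT_{i_1}\kronF{\bz}{i_1}\Bigr)\otimes\cdots\otimes\Bigl(\sum_{i_j} \bT_{i_j}\kronF{\bz}{i_j}\Bigr)
  = \sum_{i_1,\dots,i_j\ge 1} \bigl(\bT_{i_1}\kronF{\bz}{i_1}\bigr)\otimes\cdots\otimes\bigl(\bT_{i_j}\kronF{\bz}{i_j}\bigr).
\end{equation*}
The second equality is bilinearity of $\otimes$.

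\textbf{Step 2 (mixed-product property).} Apply $(\bA\otimes\bB)(\bC\otimes\bD)=\bA\bC\otimes\bB\bD$ repeatedly to each summand. This gives
\begin{equation*}
  (\bT_{i_1}\otimes\cdots\otimes\bT_{i_j})\bigl(\kronF{\bz}{i_1}\otimes\cdots\otimes\kronF{\bz}{i_j}\bigr)
  = (\bT_{i_1}\otimes\cdots\otimes\bT_{i_j})\,\kronF{\bz}{i_1+\dots+i_j},
\end{equation*}
where we use associativity of $\otimes$ to identify $\kronF{\bz}{i_1}\otimes\cdots\otimes\kronF{\bz}{i_j}$ with $\kronF{\bz}{i}$ for $i=\sum_k i_k$.

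\textbf{Step 3 (group by degree).} Grouping the tuples $(i_1,\dots,i_j)$ with $\sum_k i_k = i$ gives
\begin{equation*}
  \kronF{\bx}{j}=\sum_{i\ge j}\cT_{j,i}\kronF{\bz}{i}.
\end{equation*}
Here $\cT_{j,i}$ is exactly the sum in \cref{def:tensor-sum} over ordered tuples. The word ``unique'' there means distinct ordered products, each counted once, which matches the ordered expansion. The restriction $i\ge j$ holds because every $i_k\ge 1$.

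\textbf{Step 4 (swap the sums).} Multiply by $\bP_j$ and sum over $j\ge1$. Exchanging the order of summation over the region $1\le j\le i$ yields
\begin{equation*}
  \tilde\bP_i=\sum_{j=1}^i\bP_j\cT_{j,i}.
\end{equation*}

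The main obstacle is bookkeeping rather than anything deep. One point is justifying the identification of nested Kronecker powers in Step 2, which follows from associativity. The other is making sure that ``unique'' in \cref{def:tensor-sum} is read as the sum over ordered compositions of $i$ into $j$ positive parts. That reading is consistent with the example $\bar\cT_{2,3}=\bar\bT_1\otimes\bar\bT_2+\bar\bT_2\otimes\bar\bT_1$. Once these two points are settled, the interchange of sums in Step 4 is justified, since for each fixed degree $i$ only finitely many $j$ contribute.
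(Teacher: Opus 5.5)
Your proposal is correct and follows the same route as the paper's proof: substitute $\bx=\bPhi(\bz)$ into $\bp$ and collect the terms of degree $i$. You make explicit the steps the paper leaves to inspection. These are the multilinearity and mixed-product property of $\otimes$, the reading of $\cT_{j,i}$ as a sum over ordered compositions of $i$ into $j$ positive parts, and the exchange of summation, which is valid because only finitely many $j$ contribute to each degree.
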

\begin{proof}
  Substituting $\bx  = \bPhi(\bz)$ into $\bp(\bx)$ yields
  \begin{align*}
     & \bp(\bPhi(\bz))  =  \sum_{i=1}^\infty \bP_i \kronF{(\bPhi(\bz))}{i}
    \\
     & =  \bP_1 (\bT_1 \bz + \bT_2 \kronF{\bz}{2} + \dots)                                                                      +  \bP_2 \left((\bT_1 \bz + \bT_2 \kronF{\bz}{2} + \dots) \otimes (\bT_1 \bz + \bT_2 \kronF{\bz}{2} + \dots )\right)
    + \dots
  \end{align*}
  Collecting the terms of degree $i$, one can see that the coefficient of $\kronF{\bz}{i}$ is given by $\tilde\bP_i  =  \sum_{j=1}^i  \bP_j \cT_{j,i}$.
\end{proof}
Throughout this work, equality will be used in reference to all polynomial expansions of quantities, assuming that the series expansions are taken to infinity.
In practice, when these series expansions are truncated, it is understood that the equality is replaced by an approximation.

\section{Nonlinear balanced truncation via polynomial approximations}\label{sec:polynomial-balancing}
In this section,
we present new mathematical results and algorithms based on Taylor-series expansions to enable scalable, general purpose implementation of the theory of nonlinear balanced truncation.
The proposed approach is based entirely on polynomial approximations, and
an important contribution of the proposed approach is that it permits computing an explicit representation of the balanced realization,
which is used to produce a true ROM.
In appendix \ref{sec:Newton-balancing},
we also describe an alternate way to implicitly \textit{evaluate} the balanced realization using Newton iterations.
In \cref{sec:polynomial-vs-Newton}, we show that the two approaches yield similar results locally;
however, the Newton iteration approach is implicit:
instead of directly evaluating the ROM in \cref{eq:ROM-NL}, the Newton iteration approach evaluates \cref{eq:ROM} via the full-order balanced realization \cref{eq:balanced-realization-operators} as an intermediate step.
This involves projection of the reduced-order state back to the full-order dimension, where the nonlinearity needs to be evaluated, commonly known as a lifting bottleneck.
This issue is exacerbated by the need to perform these evaluations multiple times during each Newton iteration, making the approach significantly more expensive than the proposed polynomial approach.
Nonetheless, it can be used to validate the accuracy of the proposed polynomial approach.

As the starting point,
we assume access to polynomial approximations for the input-normal/output-diagonal transformation
\begin{align}\label{eq:input-normal}
  \bx = \bPhi(\bz) =\bT_1 \bz + \bT_2 \kronF{\bz}{2} + \bT_3 \kronF{\bz}{3} + \dots
\end{align}
and the squared singular value functions
\begin{align}\label{eq:squared-singular-value-functions}
  \sigma^2_i(z_i) = c_0^{(i)} + c_1^{(i)} z_i + c_2^{(i)} z_i^2 + \dots
\end{align}
for $i=1,\dots,n$.
These can be obtained using the methods and associated open-source software described in our previous work~\cite{Corbin2025b}.
In \cref{sec:polynomial-balancing-transformation}, we detail the procedure for obtaining the polynomial expression for the balancing transformation $\bx = \bar\bPhi(\bar\bz)$ given polynomial approximations for the input-normal/output-diagonal transformation
\cref{eq:input-normal}
and the squared singular value functions
\cref{eq:squared-singular-value-functions}.
Subsequently in \cref{sec:polynomial-balanced-realization}, we describe the
procedure to obtain the polynomial expressions for
$\bar\bf(\bar\bz)$,
$\bar\bg(\bar\bz)$, and
$\bar\bh(\bar\bz)$, which form the balanced realization.
Crucially, our method avoids forming and inverting the nonlinear Jacobian $\left[\partial \bar\bPhi(\bar\bz)/\partial \bar\bz\right]^{-1}$, instead only requiring invertibility of the linear component of the balancing transformation.
Then, in \cref{sec:polynomial-balanced-ROM}, we present a new balance-\textit{then}-truncate procedure for obtaining an explicit ROM in polynomial form.

\subsection{Polynomial approximation to the balancing transformation}\label{sec:polynomial-balancing-transformation}
As described in \cref{cor:scaling-1},
the balancing transformation is a composition of two intermediate nonlinear transformations, namely the input-normal/output-diagonal transformation $\bx=\bPhi(\bz)$ and a subsequent scaling transformation $\bz=\bvarphi(\bar\bz)$.
A polynomial expression for the input-normal/output-diagonal transformation is assumed to be given,
but the scaling transformation is not yet known explicitly.
As described in \cref{cor:scaling-1}, the
scaling transformation and its inverse can be summarized as
\begin{align*}
  \bar\bz & = \bvarphi^{-1}(\bz) = \bz \odot \sqrt{\bsigma(\bz)}                  & \text{and} &  &
  \bz     & = \bvarphi(\bar\bz) = \bar\bz \odot \sqrt{\bar\bsigma(\bar\bz)}^{-1},
\end{align*}
where $\odot$ is the Hadamard product,
$\sqrt{\bsigma(\bz)}$ is the vector whose elements are $\sqrt{\sigma_i (z_i)}$,
$\bsigma(\bz) = \bar\bsigma(\bar\bz)$, and
$\sqrt{\bar\bsigma(\bar\bz)}^{-1}$
is the vector whose elements are
$\sqrt{\bar\sigma_i (\bar z_i)}^{-1}$.
To obtain the balancing transformation,
we must first compute a polynomial approximation to the scaling transformation $\bvarphi(\bar\bz)$ using the polynomial expression for the \textit{squared} singular value functions $\bsigma^2(\bz)$.
The main issue is the subtle detail that we have access to the \textit{squared} singular value functions $\bsigma^2(\bz)$ in the input-normal/output-diagonal coordinate system rather than the singular value functions $\bar\bsigma(\bar\bz)$ themselves in the balanced coordinates.
Furthermore, despite the fact that $\bsigma(\bz) = \bar\bsigma(\bar\bz)$, the polynomial expansions for the two are not the same.

Let the $i$th squared singular value function be given by the expansion
\cref{eq:squared-singular-value-functions}.
Then, from \cref{cor:scaling-1}, the $i$th component of the inverse scaling transformation is
\begin{align*}
  \varphi_i^{-1}(z_i) & = z_i \cdot \left( c_0^{(i)} + c_1^{(i)} z_i + c_2^{(i)} z_i^2 + \dots  \right)^{1/4}.
\end{align*}
Assuming that $c_0^{(i)} \neq 0$, as is required in nonlinear balancing,
this expression is analytic, so $\varphi^{-1}(z_i)$ can be locally approximated by a convergent Taylor expansion
\begin{align}
  \varphi_i^{-1}(z_i) & = a_1^{(i)} z_i + a_2^{(i)} z_i^2 + a_3^{(i)} z_i^3 + \dots
\end{align}
where the coefficients are given by the formula
\begin{align} \label{eq:inverse-scaling-coefficients}
  a_k^{(i)} & = \frac{1}{k!} \frac{\rd^k \varphi_i^{-1}(z_i)}{\rd z_i^k } .
\end{align}
The coefficients \cref{eq:inverse-scaling-coefficients} can be pre-computed offline using symbolic computations.
The first five coefficients are
\begin{align*}
  {a_1^{(i)}} & = {c_1^{(i)}}^{1/4},                                                                                                                                      \qquad \qquad\qquad\qquad\quad
  {a_2^{(i)}}  = \frac{{c_2^{(i)}}}{4 {c_1^{(i)}}^{3/4}},                                                                                                                        \qquad\qquad
  {a_3^{(i)}}  = \frac{-3 {c_2^{(i)}}^2 + 8 {c_1^{(i)}} {c_3^{(i)}}}{32 {c_1^{(i)}}^{7/4}},                                                                                                                                    \\
  {a_4^{(i)}} & = \frac{7 {c_2^{(i)}}^3 - 24 {c_1^{(i)}} {c_2^{(i)}} {c_3^{(i)}} + 32 {c_1^{(i)}}^2 {c_4^{(i)}}}{128 {c_1^{(i)}}^{11/4}},                                                                                      \\
  {a_5^{(i)}} & = \frac{-77 {c_2^{(i)}}^4 + 336 {c_1^{(i)}} {c_2^{(i)}}^2 {c_3^{(i)}} - 384 {c_1^{(i)}}^2 {c_2^{(i)}} {c_4^{(i)}} + 64 {c_1^{(i)}}^2 (-3 {c_3^{(i)}}^2 + 8 {c_1^{(i)}} {c_5^{(i)}})}{2048 {c_1^{(i)}}^{15/4}}.
\end{align*}
Given
the expansion for the inverse scaling transformation,
we can construct an expansion for the scaling transformation as
\begin{align*}
  \varphi_i(\bar z_i) = A_1^{(i)} \bar z_i + A_2^{(i)} \bar z_i^2 + A_3^{(i)} \bar z_i^3 + \dots                                                     \end{align*}
using series reversion \cite{Abramowitz2013,Weisstein2025}.
The coefficients can again be pre-computed offline using symbolic computations, with
the first five coefficients being
\begin{align*}
  {A_1^{(i)}} & =\frac{1}{{a_1^{(i)}}},                                                                                                      \qquad\qquad\qquad\qquad\quad
  {A_2^{(i)}}                =-\frac{{a_2^{(i)}}}{{a_1^{(i)}}^3},                                                                                                \qquad\qquad
  {A_3^{(i)}}                =\frac{2{a_2^{(i)}}^2-{a_1^{(i)}}{a_3^{(i)}}}{{a_1^{(i)}}^5},                                                                                              \\
  {A_4^{(i)}} & =-\frac{5{a_2^{(i)}}^3-5{a_1^{(i)}}{a_3^{(i)}}{a_2^{(i)}}+{a_1^{(i)}}^2{a_4^{(i)}}}{{a_1^{(i)}}^7},                                                                     \\
  {A_5^{(i)}} & =\frac{14{a_2^{(i)}}^4-21{a_1^{(i)}}{a_3^{(i)}}{a_2^{(i)}}^2+6{a_1^{(i)}}^2{a_4^{(i)}}{a_2^{(i)}}+3{a_1^{(i)}}^2{a_3^{(i)}}^2-{a_1^{(i)}}^3{a_5^{(i)}}}{{a_1^{(i)}}^9}.
\end{align*}
With these coefficients, a polynomial expansion for the scaling transformation can be constructed in Kronecker product form as
\begin{align}
  \bvarphi(\bar \bz) & = \bA_1 \bar \bz
  + \bA_2 \kronF{\bar \bz}{2}
  + \bA_3 \kronF{\bar \bz}{3}
  + \dots,
  \label{eq:scaling-transformation}
\end{align}
where $\bA_k \in \real^{n \times n^k}$ are sparse polynomial coefficient matrices with exactly $n$ nonzero components given by
\begin{align}
  \bA_k(i,j) = \begin{cases}
                 A^{(m)}_k, & \text{if } i=m \text{ and } j=\frac{m-1}{n-1}(n^k-1) + 1 \\
                 0,         & \text{otherwise}
               \end{cases}.
\end{align}

Upon carrying out the series reversion procedure,
we have the two portions of the balancing transformation in polynomial form: the input-normal/output-diagonal transformation $\bx = \bPhi(\bz)$, and the scaling transformation $\bz = \bvarphi(\bar\bz)$.
\Cref{thm:poly-balancing-transformation} shows how the polynomial approximation to the balancing transformation is obtained from these two polynomial approximations.
\begin{proposition}\label{thm:poly-balancing-transformation}
  Let the input-normal/output-diagonal and scaling transformations for the nonlinear system be expanded in polynomial form
  as in \cref{eq:input-normal,eq:scaling-transformation}
  with coefficients $\bT_k,\bA_k \in \real^{n \times n^k}$ for $k=1,\dots,\infty$, respectively.
  Then the balancing transformation is locally approximated as
  \begin{align}
    \bx = \bar\bPhi(\bar\bz)= \bPhi(\bvarphi(\bar\bz)) = \bar\bT_1 \bar\bz + \bar\bT_2 \kronF{\bar\bz}{2} + \bar\bT_3 \kronF{\bar\bz}{3} + \dots, \label{eq:balancing-transformation}
  \end{align}
  where the balancing transformation coefficients are
  \begin{align}
    \bar\bT_i & = \sum_{j=1}^i  \bT_j \cA_{j,i}, \label{eq:balancing-transformation-coefficients}
  \end{align}
  and
  $\cA_{m,\ell}$
  denotes the sum of all unique tensor products of $\bA_k$ with $m$ factors and $n^\ell$ columns of \cref{def:tensor-sum}.
\end{proposition}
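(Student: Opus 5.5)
The plan is to recognize the statement as a direct instance of \cref{lem:poly-transformation}, with the outer polynomial $\bp = \bPhi$ and the inner map $\bvarphi$. By \cref{cor:scaling-1}, the balancing transformation is the composition $\bar\bPhi = \bPhi \circ \bvarphi$. We substitute the expansion \cref{eq:scaling-transformation} for $\bz = \bvarphi(\bar\bz)$ into \cref{eq:input-normal}:
\begin{align*}
  \bar\bPhi(\bar\bz) = \sum_{j=1}^\infty \bT_j \kronF{\left(\sum_{k=1}^\infty \bA_k \kronF{\bar\bz}{k}\right)}{j}.
\end{align*}
\Cref{lem:poly-transformation} applies with the identifications $\bP_j \leftarrow \bT_j \in \real^{n\times n^j}$ and $\bT_k \leftarrow \bA_k \in \real^{n \times n^k}$, taking $r = n$. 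Here the inner map goes from $\real^n$ to $\real^n$, so the dimensions in the lemma match.

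The steps, in order, are as follows.
\begin{enumerate}
  \item Check that both expansions have no constant term: $\bPhi(\bzero)=\bzero$ and $\varphi_i(0)=0$. This ensures that only finitely many terms contribute to each degree.
  \item Expand each $j$-fold Kronecker power by multilinearity of $\otimes$. The term $\bA_{k_1}\kronF{\bar\bz}{k_1}\otimes\dots\otimes \bA_{k_j}\kronF{\bar\bz}{k_j}$ equals $(\bA_{k_1}\otimes\dots\otimes\bA_{k_j})\kronF{\bar\bz}{k_1+\dots+k_j}$, by the mixed-product property and associativity of $\kronF{\bar\bz}{k}$.
  \item Collect the terms of total degree $i$. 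The contributions from $\bT_j$ are exactly $\bT_j \sum_{k_1+\dots+k_j=i}\bA_{k_1}\otimes\dots\otimes\bA_{k_j} = \bT_j\cA_{j,i}$, by \cref{def:tensor-sum}.
  \item Note that $j$ ranges only from $1$ to $i$, since each $k_\ell\ge 1$. This gives \cref{eq:balancing-transformation-coefficients}.
\end{enumerate}

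The only point needing care is the word \emph{locally}, i.e.\ justifying the rearrangement of a double infinite series. For this I would invoke analyticity near the origin. $\bvarphi$ is analytic, since $c_0^{(i)}\neq 0$ makes $\varphi_i^{-1}$ analytic with nonzero derivative at $0$, and the analytic inverse function theorem then gives analyticity of $\varphi_i$. $\bPhi$ is assumed analytic (or given as a convergent series). The composition of convergent power series with $\bvarphi(\bzero)=\bzero$ is absolutely convergent on a small enough neighborhood, which permits regrouping by degree. If only formal or truncated series are intended, consistent with the paper's convention on equality, this step reduces to matching coefficients of formal power series.

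I expect no genuine obstacle beyond this convergence remark. The combinatorial bookkeeping is already encapsulated in \cref{lem:poly-transformation}.
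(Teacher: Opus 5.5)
Your proposal is correct and follows the same approach as the paper. The paper's proof simply observes that $\bar\bPhi = \bPhi\circ\bvarphi$ and applies \cref{lem:poly-transformation} with $\bP_j \leftarrow \bT_j$ and $\bT_k \leftarrow \bA_k$; your unpacking of the degree-collection bookkeeping and your analyticity remark justifying ``locally'' are extra detail that the paper leaves implicit.
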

\begin{proof}
  Since $\bx = \bar\bPhi(\bar\bz) = \bPhi(\bvarphi(\bar\bz))$,
  the result follows from the composition of polynomials.
  Application of \cref{lem:poly-transformation} leads to the expression \cref{eq:balancing-transformation-coefficients}.
\end{proof}

\subsection{Polynomial approximation of the dynamics in the balanced realization}\label{sec:polynomial-balanced-realization}
With the explicit polynomial expression for the balancing transformation~\cref{eq:balancing-transformation},
we proceed to
derive expansions for
$\bar\bf(\bar\bz)$,
$\bar\bg(\bar\bz)$, and
$\bar\bh(\bar\bz)$, which are the drift, input, and output maps in the balanced realization \cref{eq:FOM-balanced}.
\begin{theorem}\label{thm:poly-balanced-realization}
  Let the nonlinear system \cref{eq:FOM-NL} be in polynomial form (or be approximated as such) with drift, input, and output maps given by
  \begin{align*}
    \bf(\bx)
     & = \sum_{p=1}^\infty  \bF_p \kronF{\bx}{p},                             &
    \bg(\bx)
     & =  \sum_{p=0}^\infty  \bG_p \left(\bI_m \otimes \kronF{\bx}{p}\right), &
    \bh(\bx)
     & = \sum_{p=1}^\infty  \bH_p \kronF{\bx}{p},
  \end{align*}
  and let the
  nonlinear balancing transformation be approximated in polynomial form as in \cref{eq:balancing-transformation}
  such that in the $\bar{\bz}$ coordinates, the balanced model realization is
  \cref{eq:FOM-balanced}.
  Then a local polynomial approximation to the balanced realization is given by
  \begin{align*}
    \bar\bf(\bar\bz)
     & = \sum_{p=1}^\infty  \bar\bF_p \kronF{\bar\bz}{p},                             &
    \bar\bg(\bar\bz)
     & =  \sum_{p=0}^\infty  \bar\bG_p \left(\bI_m \otimes \kronF{\bar\bz}{p}\right), &
    \bar\bh(\bar\bz)
     & = \sum_{p=1}^\infty  \bar\bH_p \kronF{\bar\bz}{p}.
  \end{align*}
  The explicit formula for the drift coefficients of the balanced realization is
  \begin{align}
    \bar\bF_k
    =  \bar\bT_1^{-1} \left[ \sum_{j=1}^k  \bF_j \bar\cT_{j,k}  - \sum_{i=2}^k
      \bar\bT_i \cL_i \left(\bar\bF_{k-i+1}\right) \right], \label{eq:poly-balanced-drift}
  \end{align}
  where $\cL_i(\cdot)$ is the $i$-way Lyapunov matrix of \cref{def:k-way-Lyapunov-matrix} and $\bar\cT_{j,k}$ is the sum of all unique tensor products of $\bar\bT_i \in \real^{n \times n^i}$ with $j$ factors and $n^k$ columns of \cref{def:tensor-sum}.
  The explicit formula for the input map coefficients of the balanced realization is given by
  $\bar\bG_k = [\bar\bG^{(1)}_k \,\, \bar\bG^{(2)}_k \,\, \dots \,\, \bar\bG^{(m)}_k]$,
  where
  \begin{align*}
    \bar\bG^{(\ell)}_k
    =  \bar\bT_1^{-1} \left[ \sum_{j=1}^k  \bG^{(\ell)}_j \bar\cT_{j,k}  - \sum_{i=2}^k
      \bar\bT_i \cL_i \left(\bar\bG^{(\ell)}_{k-i+1}\right) \right].
  \end{align*}
  The explicit formula for the output coefficients of the balanced realization is
  \begin{align}
    \bar\bH_k
    =   \sum_{j=1}^k  \bH_j \bar\cT_{j,k}. \label{eq:poly-balanced-output}
  \end{align}

\end{theorem}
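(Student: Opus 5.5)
The plan avoids inverting the nonlinear Jacobian. Multiply the definition of $\bar\bf$ in \cref{eq:balanced-realization-operators} through by the Jacobian to get the equivalent identity
\begin{align*}
  \frac{\partial \bar\bPhi(\bar\bz)}{\partial \bar\bz}\, \bar\bf(\bar\bz) = \bf(\bar\bPhi(\bar\bz)),
\end{align*}
and do the same for each column $\bar\bg^{(\ell)}$ of $\bar\bg$. Then match Kronecker coefficients degree by degree on both sides. The output map needs no such step: $\bar\bh = \bh\circ\bar\bPhi$ is a plain composition, so \cref{eq:poly-balanced-output} follows immediately from \cref{lem:poly-transformation} with $\bT_i$ replaced by $\bar\bT_i$.

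\textbf{Right-hand side.} By \cref{lem:poly-transformation}, the coefficient of $\kronF{\bar\bz}{k}$ in $\bf(\bar\bPhi(\bar\bz))$ is $\sum_{j=1}^k \bF_j \bar\cT_{j,k}$.

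\textbf{Left-hand side.} Differentiate the series \cref{eq:balancing-transformation}. For any vector $\bv$, the product rule for Kronecker powers gives
\begin{align*}
  \frac{\partial \kronF{\bar\bz}{i}}{\partial \bar\bz}\bv = \sum_{j=1}^i \bar\bz\otimes\cdots\otimes \bv \otimes\cdots\otimes\bar\bz ,
\end{align*}
with $\bv$ in the $j$th slot. Substitute $\bv=\bar\bf(\bar\bz)=\sum_p \bar\bF_p \kronF{\bar\bz}{p}$. Using the mixed-product property, each summand becomes $(\bI_n\otimes\cdots\otimes\bar\bF_p\otimes\cdots\otimes\bI_n)\kronF{\bar\bz}{i+p-1}$. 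Summing over slots gives $\cL_i(\bar\bF_p)\kronF{\bar\bz}{i+p-1}$, by \cref{def:k-way-Lyapunov-matrix}.

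So the left-hand side is $\sum_{i,p}\bar\bT_i\cL_i(\bar\bF_p)\kronF{\bar\bz}{i+p-1}$. Its degree-$k$ coefficient is $\sum_{i=1}^k \bar\bT_i\cL_i(\bar\bF_{k-i+1})$. Since $\cL_1(\bar\bF_k)=\bar\bF_k$, the $i=1$ term is $\bar\bT_1\bar\bF_k$.

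\textbf{Solving for $\bar\bF_k$.} Equate the two degree-$k$ coefficients and move the $i\ge2$ terms to the right. This is legitimate because those terms involve only $\bar\bF_1,\dots,\bar\bF_{k-1}$, so the resulting equations are triangular. Invert $\bar\bT_1$ to obtain \cref{eq:poly-balanced-drift}. The inverse exists because $\bar\bT_1=\bT_1\bA_1$, where $\bT_1$ is the invertible linear input-normal/output-diagonal transformation and $\bA_1$ is diagonal with entries $1/a_1^{(i)}\neq0$. The base case is $\bar\bF_1=\bar\bT_1^{-1}\bF_1\bar\bT_1$.

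\textbf{Input map.} Apply the identical argument to each column $\bar\bg^{(\ell)}(\bar\bz)=\sum_p\bar\bG_p^{(\ell)}\kronF{\bar\bz}{p}$, a vector polynomial whose expansion includes a constant term. Here $\bG^{(\ell)}_p$ is the block of $\bG_p$ acting on $\bu_\ell$, using $\bG_p(\bI_m\otimes\kronF{\bx}{p})\bu=\sum_\ell u_\ell\bG_p^{(\ell)}\kronF{\bx}{p}$.

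\textbf{Main obstacle: indexing with a degree-zero term.} The delicate part is keeping degree indices consistent once $\bar\bg^{(\ell)}$ has a constant term $\bar\bG_0^{(\ell)}$. Then $\bar\bT_i\cL_i(\bar\bG_0^{(\ell)})$ contributes at degree $i-1$, so the degree-$k$ equation involves $\bar\bG^{(\ell)}_{k-i+1}$ for $i$ up to $k+1$. Likewise, \cref{lem:poly-transformation} must be extended with a $j=0$ term, $\bG_0^{(\ell)}$ times the empty product. I would first fix the indexing convention: treat $\cL_i$ of an $n\times 1$ matrix as the correct $n^i\times n^{i-1}$ object, and check that the stated formula's summation ranges correspond to this shifted bookkeeping. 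Then I would verify the recursion on the lowest orders, e.g.\ $\bar\bG_0^{(\ell)}=\bar\bT_1^{-1}\bG_0^{(\ell)}$, before asserting the general pattern.
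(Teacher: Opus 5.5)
Your treatment of the drift and output maps is the paper's argument. You multiply the definition of $\bar\bf$ through by the Jacobian, expand both sides, read off the degree-$k$ left-hand coefficient $\sum_{i=1}^k \bar\bT_i\cL_i(\bar\bF_{k-i+1})$, and invert $\bar\bT_1$. Your slot-by-slot product rule is slightly cleaner than the paper's. The paper writes the Jacobian as $\sum_p p\bar\bT_p(\bI_n\otimes\kronF{\bar\bz}{p-1})$, which tacitly assumes symmetrized $\bar\bT_p$. Your derivation yields $\bar\bT_i\cL_i(\bar\bF_p)$ with no symmetry assumption.

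The genuine issue is the input map, and the check you defer would fail. Carrying out your own bookkeeping (a $j=0$ term in the composition, and $i$ running up to $k+1$ on the left) gives
\begin{align*}
  \bar\bG^{(\ell)}_0 = \bar\bT_1^{-1}\bG^{(\ell)}_0, \qquad
  \bar\bG^{(\ell)}_k = \bar\bT_1^{-1}\left[\sum_{j=1}^k \bG^{(\ell)}_j\bar\cT_{j,k} - \sum_{i=2}^{k+1}\bar\bT_i\cL_i\left(\bar\bG^{(\ell)}_{k-i+1}\right)\right], \quad k\geq 1.
\end{align*}
The stated recursion differs in two ways. At $k=0$ it returns $\bar\bG^{(\ell)}_0=\bzero$, because the bracket is empty. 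For $k\geq 1$ it omits the term $\bar\bT_{k+1}\cL_{k+1}(\bar\bG^{(\ell)}_0)$. For example, take constant $\bg$ and quadratic $\bar\bPhi$. The true coefficient is $\bar\bG^{(\ell)}_1=-\bar\bT_1^{-1}\bar\bT_2\cL_2(\bar\bT_1^{-1}\bG^{(\ell)}_0)$, while the stated formula gives $\bzero$.

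No index shift reconciles the two. Reading every $\bG$-index as one less than the degree fixes the $\cL$-sum. But then the composition term would have to be $\bG^{(\ell)}_j\bar\cT_{j-1,k-1}$, and $\bG^{(\ell)}_j\bar\cT_{j,k}$ is not even dimensionally compatible.

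The paper's proof does not resolve this either. It says the drift argument carries over and writes $\bar\bg^{(i)}(\bar\bz)=\sum_{p=1}^\infty\bar\bG^{(i)}_p\kronF{\bar\bz}{p}$. That silently drops the constant term, which the statement includes and which a controllable linearization needs.

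So do not try to match the stated summation ranges. Complete your proof by establishing the corrected recursion above. Note that it agrees with the stated one exactly when $\bG^{(\ell)}_0=\bzero$.
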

\begin{proof}
  Let us begin with the proof of the output map coefficients.
  The output map in the balanced coordinates is
  \begin{align*}
    \by = \bh(\bar\bPhi(\bar\bz)),
  \end{align*}
  which is the composition of
  $\bh(\bx) = \sum_{p=1}^\infty  \bH_p \kronF{\bx}{p}$
  and
  $\bar\bPhi(\bar\bz) = \sum_{p=1}^\infty  \bar\bT_p \kronF{\bar\bz}{p}$.
  The result
  can be computed straightforwardly
  using
  \cref{lem:poly-transformation}, yielding the formula in \cref{eq:poly-balanced-output}.

  The transformed state equation in the balanced coordinates is given by
  \begin{equation}
    \frac{\partial \bar\bPhi(\bar\bz)}{\partial \bar\bz} \dot{\bar\bz} = \bf(\bar\bPhi(\bar\bz)) + \bg(\bar\bPhi(\bar\bz)) \bu(t). \label{eq:4}
  \end{equation}
  Inserting
  \cref{eq:FOM-balanced} into \cref{eq:4} gives
  \begin{align}
    \frac{\partial \bar\bPhi(\bar\bz)}{\partial \bar\bz} \left(\bar\bf(\bar\bz) + \bar\bg(\bar\bz) \bu(t)\right)
     & = \bf(\bar\bPhi(\bar\bz)) + \bg(\bar\bPhi(\bar\bz))\bu(t) , \label{eq:5}
  \end{align}
  and rather than invert the nonlinear Jacobian, we proceed to insert the polynomial expressions for each of these quantities and match coefficients for terms of the same polynomial degree on the left and right sides.

  Starting with the drift terms, we seek to match the degree $k$ terms on the left and right of \cref{eq:5}.
  On the right, the quantity
  $\bf(\bar\bPhi(\bar\bz))$ is a composition of two polynomials, and by \cref{lem:poly-transformation}, the $k$th coefficient can be expressed as
  $\sum_{j=1}^k \bF_j \bar\cT_{j,k}$.
  On the left,
  the Jacobian of the polynomial balancing transformation
  can be written without loss of generality as
  \begin{align*}
    \frac{\partial \bar\bPhi(\bar\bz)}{\partial \bz}
     & = \sum_{p=1}^\infty p\bar\bT_p \left( \bI_n \otimes \kronF{\bar\bz}{p-1}\right).
  \end{align*}
  Inserting these expressions, the drift terms are matched as
  \begin{align}
    \sum_{i=1}^\infty i\bar\bT_i \left( \bI_n \otimes \kronF{\bar\bz}{i-1}\right)
    \sum_{j=1}^\infty  \bar\bF_j \kronF{\bar\bz}{j} & = \sum_{k=1}^\infty  \sum_{j=1}^k \bF_j \bar\cT_{j,k} \kronF{\bar\bz}{k}. \label{eq:20}
  \end{align}
  From here, the terms on the left must be manipulated
  to match the terms on the right.
  Using Kronecker product identities, it is possible to write an arbitrary degree~$k$ term on the left using the $i$-way Lyapunov matrix of \cref{def:k-way-Lyapunov-matrix} as
  \begin{align*}
    i\bar\bT_i \left(\bI_n \otimes \kronF{\bar\bz}{i-1} \right)
    \bar\bF_j \kronF{\bar\bz}{j}
     & =  \bar\bT_i \cL_i(\bar\bF_j) \kronF{\bar\bz}{k}
  \end{align*}
  for $i+j-1=k$.
  With this substitution,
  the collection of degree $k$ terms on left and right in \cref{eq:20} can be rewritten as
  \begin{align*}
    \bar\bT_1\bar\bF_k \kronF{\bar\bz}{k} + \sum_{i=2}^k
    \bar\bT_i \cL_i \left(\bar\bF_{k-i+1}\right) \kronF{\bar\bz}{k}
    =   \sum_{j=1}^k \bF_j \bar\cT_{j,k} \kronF{\bar\bz}{k},
  \end{align*}
  where the unknown coefficient $\bar\bF_k$ has been isolated from the other terms.
  Rearranging only requires the inversion of $\bar\bT_1$, whose inverse is analytically known, yielding the expression in \cref{eq:poly-balanced-drift}.

  For the input map coefficients, we consider each input separately.
  The state equation of the dynamics can be rewritten as
  \begin{align*}
    \dot{\bx}
     & = \bf(\bx) + \sum_{i=1}^{m} \bg^{(i)}(\bx) u_i(t),
  \end{align*}
  where $\bg^{(i)}(\bx) =  \sum_{p=0}^\infty  \bG^{(i)}_p  \kronF{\bx}{p} \in \real^n$ denotes the $i$th column of the matrix
  $\bg(\bx)=  \sum_{p=0}^\infty  \bG_p \left(\bI_m \otimes \kronF{\bx}{p}\right) \in \real^{n\times m}$.
  The coefficients of $\bg(\bx)$ and $\bg^{(i)}(\bx)$ are related
  by
  $\bG_p = [ \bG^{(1)}_p \,\, \bG^{(2)}_p \,\, \dots \,\, \bG^{(m)}_p ]$.
  Concerning the $i$th input $u_i(t)$, the same procedure as was used for the drift terms can be used.
  Writing the balanced realization as
  \begin{align*}
    \dot{\bar\bz} & = \bar\bf(\bar\bz) + \sum_{i=1}^{m} \bar\bg^{(i)}(\bar\bz) u_i(t),
  \end{align*}
  the polynomial expansion for the $i$th input vector field is
  \begin{align*}
    \bar\bg^{(i)}(\bar\bz) & = \sum_{p=1}^\infty  \bar\bG^{(i)}_p \kronF{\bar\bz}{p},
                           & \bar\bG^{(\ell)}_k
                           & =  \bar\bT_1^{-1} \left[ \sum_{j=1}^k  \bG^{(\ell)}_j \bar\cT_{j,k}  - \sum_{i=2}^k
      \bar\bT_i \cL_i \left(\bar\bG^{(\ell)}_{k-i+1}\right) \right].
  \end{align*}
  To write the dynamics in the original control-affine form, the individual input vector field polynomial coefficients are stacked in block form as
  $\bar\bG_k = [\bar\bG^{(1)}_k \,\, \bar\bG^{(2)}_k \,\, \dots \,\, \bar\bG^{(m)}_k]$.
\end{proof}

The final requirement for the balanced realization is the transformed initial condition,
which is provided by the next theorem.

\begin{theorem}\label{thm:poly-inverse}
  Let $\bx(0) = \bx_0$ be the initial condition of the control-affine dynamical system \cref{eq:FOM-NL},
  and let the system's nonlinear balancing transformation be approximated in polynomial form as in \cref{eq:balancing-transformation}.
  Then the initial condition in the balanced realization is $\bar\bz(0) = \bar\bz_0 = \bar\bPhi^{-1}(\bx_0)$, where the inverse balancing transformation is locally approximated by
  \begin{align}\label{eq:inverse-balancing-transformation}
    \bar\bz = \bar\bPhi^{-1}(\bx)
     & = \bP_1 \bx + \bP_2 \kronF{\bx}{2} + \bP_3 \kronF{\bx}{3} +\dots
  \end{align}
  where $\bP_1 = \bar\bT_1^{-1}$ and the remaining coefficients $\bP_i$ for $i \geq 2$ are given by
  \begin{align*}
    \bP_i = \left(-\sum_{j=1}^{i-1} \bP_j \bar\cT_{j,i}\right)
    \kronF{\bP_1}{i},
  \end{align*}
  where $\bar\cT_{j,i}$ is the sum of all unique tensor products of $\bar\bT_k \in \real^{n \times n^k}$ with $j$ factors and $n^i$ columns of \cref{def:tensor-sum}.
\end{theorem}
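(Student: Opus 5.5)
We will prove this by composing the two series and matching coefficients of like degree, exactly as in the proof of \cref{thm:poly-balanced-realization}. We make the ansatz that $\bar\bPhi^{-1}$ has a local power series $\bar\bz = \sum_{i\ge1}\bP_i \kronF{\bx}{i}$. This is justified by the inverse function theorem: $\bar\bPhi$ is smooth, analytic when given as a convergent polynomial series, and has invertible linearization $\bar\bT_1$ at the origin. The defining identity is then $\bar\bPhi^{-1}(\bar\bPhi(\bar\bz)) = \bar\bz$ for all $\bar\bz$ in a neighborhood of the origin.

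Applying \cref{lem:poly-transformation} with outer polynomial $\bp = \bar\bPhi^{-1}$ (coefficients $\bP_j$) and inner polynomial $\bar\bPhi$ (coefficients $\bar\bT_i$), the left side becomes $\sum_{i\ge1}\big(\sum_{j=1}^i \bP_j \bar\cT_{j,i}\big)\kronF{\bar\bz}{i}$. We then match this against the right side $\bI_n\bar\bz$.

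\begin{itemize}
\item At degree one, $\bar\cT_{1,1}=\bar\bT_1$, so $\bP_1\bar\bT_1 = \bI_n$. Hence $\bP_1=\bar\bT_1^{-1}$.
\item At degree $i\ge2$, the identity requires $\big(\sum_{j=1}^{i}\bP_j\bar\cT_{j,i}\big)\kronF{\bar\bz}{i}=\bzero$ for all $\bar\bz$. The $j=i$ term is $\bP_i\bar\cT_{i,i}$, and $\bar\cT_{i,i}=\kronF{\bar\bT_1}{i}$, since the only way to reach total degree $i$ with $i$ factors is to take every factor of degree one. We therefore impose the sufficient coefficient equation
\begin{align*}
\bP_i \kronF{\bar\bT_1}{i} = -\sum_{j=1}^{i-1}\bP_j\bar\cT_{j,i}.
\end{align*}
Right-multiplying by $(\kronF{\bar\bT_1}{i})^{-1}=\kronF{(\bar\bT_1^{-1})}{i}=\kronF{\bP_1}{i}$, using the Kronecker inverse property from \cref{sec:kronecker-preliminaries}, gives the stated formula.
\end{itemize}

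The recursion is well defined because the right-hand side at degree $i$ involves only $\bP_1,\dots,\bP_{i-1}$, which are already determined.

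The main subtlety is the non-uniqueness of Kronecker coefficients. The vector $\kronF{\bar\bz}{i}$ contains repeated monomials, so vanishing of the polynomial only forces vanishing of the symmetrized coefficient, not of the raw matrix. We handle this by noting that the matrix equation above is sufficient: any $\bP_i$ satisfying it makes the degree-$i$ term vanish identically. Since the inverse map is unique as a function, the resulting series represents $\bar\bPhi^{-1}$ regardless of which coefficient representative is chosen.

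Finally, we verify that $\bP_1=\bar\bT_1^{-1}$ exists. From \cref{thm:poly-balancing-transformation}, $\bar\bT_1=\bT_1\bA_1$, where $\bT_1$ is the invertible linear input-normal/output-diagonal transformation and $\bA_1$ is the diagonal matrix of the $1/a_1^{(i)}$, which are nonzero. So $\bar\bT_1$ is invertible. Evaluating the series at $\bx=\bx_0$ then gives $\bar\bz_0=\bar\bPhi^{-1}(\bx_0)$, with the series truncated in practice as usual.
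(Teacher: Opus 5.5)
Your proposal is correct and follows essentially the same route as the paper: compose $\bar\bPhi^{-1}\circ\bar\bPhi=\mathrm{id}$, apply \cref{lem:poly-transformation}, match terms of equal degree, identify $\bar\cT_{i,i}=\kronF{\bar\bT_1}{i}$, and invert via $(\kronF{\bar\bT_1}{i})^{-1}=\kronF{\bP_1}{i}$. Your additional remarks are sound and make explicit points the paper's proof leaves implicit: existence of the series via the analytic inverse function theorem, treating the degree-$i$ coefficient equation as a sufficient condition given the non-uniqueness of Kronecker coefficients, and invertibility of $\bar\bT_1=\bT_1\bA_1$.
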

\begin{proof}
  The inverse transformation locally satisfies
  $\bar\bPhi^{-1}(\bar\bPhi(\bar\bz)) = \bar\bz$, which is recognized again as the composition of two polynomials.
  Therefore, applying \cref{lem:poly-transformation} and then equating terms of the same polynomial degree yields equations for each inverse transformation coefficient $\bP_i$.
  The first coefficient equation shows that $\bP_1$ is the inverse of the linear balancing coefficient $\bar\bT_1$:
  \begin{align*}
    \bP_1 \bar\bT_1 \bar\bz =  \bar\bz \qquad \to \qquad \bP_1 = \bar\bT_1^{-1}.
  \end{align*}
  The remaining equations for $\bP_i$ can be derived using \cref{lem:poly-transformation} as
  \begin{align*}
    \sum_{j=1}^i \bP_j \bar\cT_{j,i} = \sum_{j=1}^{i-1} \bP_j \bar\cT_{j,i} + \bP_i \bar\cT_{i,i} & = 0  \qquad \to \qquad
    \bP_i = \left(-\sum_{j=1}^{i-1} \bP_j \bar\cT_{j,i}\right)\bar\cT_{i,i}^{-1}.
  \end{align*}
  According to the properties of the Kronecker product,  $\bar\cT_{i,i}^{-1} = (\bar\bT_1 \otimes \dots \otimes \bar\bT_1)^{-1}$ can be expanded and replaced with $(\bar\bT_1^{-1} \otimes \dots \otimes \bar\bT_1^{-1}) = (\bP_1 \otimes \dots \otimes \bP_1) =
    \kronF{\bP_1}{i}$,
  completing the proof.
\end{proof}
\begin{remark}
  The linear balancing transformation coefficient $\bar\bT_1$ is typically obtained using the square-root balancing formulation \cite[Sect. 7.3]{Antoulas2005}, in which case its inverse is analytically known.
  Therefore, $\bP_1 = \bar\bT_1^{-1}$ can be obtained without needing to explicitly compute the inverse of $\bar\bT_1$.
\end{remark}

\subsection{Model order reduction by truncation}\label{sec:polynomial-balanced-ROM}
\Cref{thm:poly-balanced-realization,thm:poly-inverse} describe how to transform the original dynamics in \cref{eq:FOM-NL} into a balanced realization \cref{eq:FOM-balanced} using polynomial approximations of the balancing transformation \cref{eq:balancing-transformation}.
In order to obtain the ROM \cref{eq:ROM-NL},
we truncate the trailing $n-r$ states, leading to
a balance-then-reduce transformation of the form
$\bar\bPhi_r : \real^r \mapsto \real^n$.
If the full-order balancing transformation is given in polynomial form \cref{eq:balancing-transformation}
with coefficients $\bar\bT_k \in \real^{n \times n^k}$,
then the reduced-order transformation can be written to degree $d_\text{transf}$ as
\begin{align}
  \bar\bPhi_r(\bx_r) & \approx \bT^{(r)}_1 \bx_r + \bT^{(r)}_2 \kronF{\bx_r}{2} + \dots  +  \bT^{(r)}_{d_\text{transf}} \kronF{\bx_r}{d_\text{transf}}, \label{eq:rom-transformation}
\end{align}
where the coefficients $\bT^{(r)}_k \in \real^{n\times r^k}$ are obtained by simply eliminating the columns of $\bar\bT_k \in \real^{n\times n^k}$ that correspond to any of the truncated states.
This transformation converts the original dynamics in \cref{eq:FOM-NL} into a degree $d_\text{ROM}$ approximation of the ROM \cref{eq:ROM-NL} given by
$\bf_r(\bx_r) = \sum_{p=1}^{d_\text{ROM}}  \bF^{(r)}_p \kronF{\bx_r}{p}$,
$\bg_r(\bx_r) =  \sum_{p=0}^{d_\text{ROM}-1}  \bG^{(r)}_p \left(\bI_m \otimes \kronF{\bx_r}{p}\right)$,
and
$\bh_r(\bx_r) = \sum_{p=1}^{d_\text{ROM}}  \bH^{(r)}_p \kronF{\bx_r}{p}$.
We typically set $d_\text{ROM} = d_\text{transf}$, but they can also be set independently.
The procedure for obtaining a polynomial ROM using nonlinear balanced truncation is summarized in \cref{alg:alg1}, which is implemented in \textsc{Matlab} in the function \texttt{getBalanceThenReduceRealization()} in the \texttt{NLbalancing} repository \cite{Corbin2025c}.

\begin{algorithm}[h!]
  \caption{Computation of ROMs via nonlinear balanced truncation}\label{alg:alg1}
  \begin{algorithmic}[1]
    \Require
    Polynomial coefficients for $\bf(\bx)$, $\bg(\bx)$, $\bh(\bx)$ in \cref{eq:FOM-NL};
    initial condition $\bx_0$;
    desired transformation degree $d_\text{transf}$,
    ROM degree $d_\text{ROM}$,
    ROM dimension $r$.
    \Ensure
    Polynomial coefficients for $\bf_r(\bx_r)$, $\bg_r(\bx_r)$, $\bh_r(\bx_r)$ in \cref{eq:ROM-NL};
    initial condition $\bx_{r0}$  in \cref{eq:ROM-NL}.
    \State Compute degree $d_\text{transf}+1$ approximations to the controllability and observability energy functions $\cE_c(\bx)$ and $\cE_o(\bx)$ using the methods and open-source software proposed in \cite[Thms. 1 \& 2]{Corbin2025a}.
    \State Compute a degree $d_\text{transf}$ approximation to the input-normal/output-diagonal transformation $\bPhi(\bz)$ and the squared singular value functions $\bsigma^2(\bz)$ using the methods and open-source software proposed in \cite[Thm. 2]{Corbin2025b}.
    \State Compute a degree $d_\text{transf}$ approximation to the full-order balancing transformation $\bar\bPhi(\bar\bz)$ using \cref{thm:poly-balancing-transformation}.
    \State Truncate the full-order balancing transformation $\bar\bPhi(\bar\bz)$ to obtain the balance-\textit{then}-truncate transformation $\bar\bPhi_r(\bx_r)$ in \cref{eq:rom-transformation}.
    \State Compute a degree $d_\text{ROM}$ ROM and initial condition using \cref{thm:poly-balanced-realization,thm:poly-inverse}.
  \end{algorithmic}
\end{algorithm}

We refer to this as a balance-\textit{then}-truncate procedure because the full balancing transformation must be computed before truncation can take place.
Somewhat paradoxically then,
the propagation of small Hankel singular values (which make the system amenable to model order reduction) throughout the calculations can make this approach numerically ill-conditioned, analogous to the linear case \cite[Sect. 7.3]{Antoulas2005}.
The square-root balancing implementation of linear balanced truncation, sometimes referred to as a balance-\textit{and}-truncate, circumvents this ill-conditioning by reformulating the manner in which the transformation is computed to avoid the inversion of small Hankel singular values.

It is not immediately clear how a balance-\textit{and}-truncate procedure for the nonlinear case could be constructed.
We compute the linear transformation coefficient using the square-root balancing algorithm,
but the computation of the higher-order terms in the balancing transformation still requires the inversion of the small Hankel singular values.
One might naively hope that truncating $\bar\bT_1$ immediately to  $\bT_1^{(r)}$ and then carrying on with the rest of the nonlinear balancing computations would produce the desired balance-and-truncate nonlinear transformation, but it does not.
Similarly, we attempted to modify the computations for example in the input-normal/output-diagonal transformation to target only the first $r$ states, but again this does not produce the same result as the proper balance-then-truncate transformation.

\section{Numerical examples}\label{sec:examples}
We illustrate the proposed algorithms on a variety of examples to demonstrate the characteristics of the balancing transformations and the resulting ROMs.
The results are obtained with a Dell XPS 15 laptop featuring an Intel i7-13700H CPU and 64 GB of RAM using \textsc{Matlab} 2024a.
All examples are provided with the implementation in the \texttt{cnick1/NLbalancing} repository \cite{Corbin2025c}.

The first two examples in \cref{sec:example1,sec:example2}, respectively, illustrate features of the nonlinear balancing transformation without truncation.
The nonlinear balancing transformation is a change of coordinates characterized by a state-dependent rotation and stretching of the coordinate grid.
Therefore, if computed correctly, the differential equation solutions in the original coordinates should match the solutions in the transformed coordinates, otherwise a ROM based on that transformation may not be trustworthy.

In \cref{sec:example3,sec:example4}, we proceed to truncate the balanced realization to illustrate the model reduction procedure, first on an academic 3D model, and then on a double pendulum.
Finally, in \cref{sec:example5}, we demonstrate the scalability of the proposed approach on a finite element nonlinear beam example.
We show that nonlinear balancing ROMs are capable of capturing behaviors that linear balancing ROMs cannot qualitatively capture.
We then show that the Kronecker product-based computations we propose enable scalability into the high hundreds of state dimensions.

\subsection{2D illustrative example}\label{sec:example1}
The first model we consider is an academic example originating in \cite{Gray2001}.
The control-affine dynamics are given by
\begin{align*}
  \bf(\bx) & = -\begin{pmatrix}
                  \alpha^2 x_1 + 2 \alpha x_2 + (\alpha^2 - 2)x_2^2 \\
                  x_2
                \end{pmatrix}    ,                                &
  \bg(\bx) & = \sqrt{2}\left(\begin{matrix}
                                 \alpha - 2 x_2 \\
                                 1
                               \end{matrix}\right),                                                \\
  \bh(\bx) & = \frac{1}{\sqrt{3}} \left(3 \alpha (x_1 + x_2^2) + (\alpha - 2\sqrt{2})x_2\right),
\end{align*}
where $\alpha = \left(\sqrt{3} + \sqrt{2}\right)\left(\sqrt{3} + 2\right)$.
Using our software package, we compute the controllability and observability energy functions as
\begin{align*}
  \cE_c(\bx) & =  \frac{1}{2} (x_1^2 + 2.0 x_1 x_2^2 + x_2^4 + x_2^2) ,                             \\
  \cE_o(\bx) & = \frac{1}{2} (1.5 x_1^2 + x_1 x_2 + 1.5 x_2^2 + 3.0 x_1 x_2^2 + x_2^3 + 1.5 x_2^4).
\end{align*}
The computed energy functions are
consistent with those shown
in \cite{Gray2001}, in which they were shown to be exactly quartic.
The system is placed in input-normal/output-diagonal form by the transformation
\begin{align*}
  \bPhi(\bz) = \begin{bmatrix}
                 - 0.5 z_1^2 + z_1 z_2 - 0.707 z_1 - 0.5 z_2^2 - 0.707 z_2 \\
                 0.707 z_2 - 0.707 z_1
               \end{bmatrix},
\end{align*}
revealing the squared singular value functions to be $\sigma_1^2(z_1) = 2$ and $\sigma_2^2(z_2) =  1$ within $O(10^{-14})$.
Using the singular value functions to compute the scaling transformation, the full nonlinear balancing transformation computed to degree 7 is
\begin{align*}
  \bar\bPhi(\bar\bz) = \begin{bmatrix}
                         - 0.354\bar{z}_1^2 + 0.841\bar{z}_1\bar{z}_2 - 0.595\bar{z}_1 - 0.5\bar{z}_2^2 - 0.707\bar{z}_2 \\
                         0.707\bar{z}_2 - 0.595\bar{z}_1
                       \end{bmatrix},
\end{align*}
which reveals the following balanced realization:
\begin{align*}
  \dot{\bar z}_1 & = - 81.2 \bar{z}_1 - 67.4 \bar{z}_2 -15.2 u(t), \\
  \dot{\bar z}_2 & = - 67.4 \bar{z}_1 - 57.7 \bar{z}_2 -10.7 u(t), \\
  y              & = - 15.2 \bar{z}_1 - 10.7 \bar{z}_2.
\end{align*}
Curiously, the balanced realization is linear, which suggests
that this nonlinear model was generated by applying a quadratic transformation to a linear model.
This is consistent with the singular value functions being constant.
Thus, the balancing transformation \textit{undoes} the nonlinear transformation to recover the linear balanced realization, and in this case the balancing transformation is exactly quadratic.

In \cref{fig:example13_gridTransformations}, we illustrate the action of the balancing coordinate transformation.
Taking a grid in the balanced $\bar\bz$ coordinates, we apply the transformation $\bx = \bar\bPhi(\bar\bz)$ to each point on the grid to map to the original $\bx$ coordinates.
\begin{figure}[htb]
  \centering
  \includegraphics[width=\textwidth]{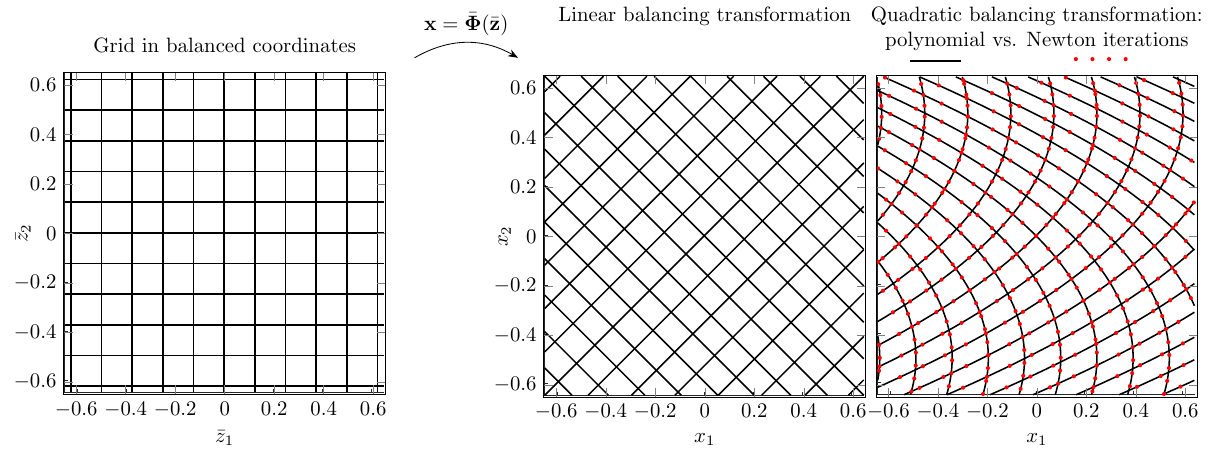}
  \caption{2D illustrative example: transformation of a grid from the balanced coordinates to the original coordinates via $\bx = \bar\bPhi(\bar\bz)$. While the polynomial approach generally introduces additional polynomial approximations compared to the Newton iteration approach, it yields the same degree of accuracy locally. }
  \label{fig:example13_gridTransformations}
\end{figure}
For the linear transformation $\bx = \bar\bT_1 \bar\bz$, the matrix $\bar\bT_1$ represents a rotation and stretching, which keeps grid lines straight.
Under the nonlinear balancing transformation $\bx = \bar\bPhi(\bar\bz)$, the amount of stretching and rotation can change as coordinates get farther from the origin, imparting curvature to the grid lines.
Observe that, locally about the origin, the nonlinear transformation is approximated by the linear transformation.

Along with the proposed polynomial approximations to the balancing transformation, we also include a comparison with the Newton iteration approach to computing the balancing transformation detailed in appendix \ref{sec:Newton-balancing} in the grid transformation plots in \cref{fig:example13_gridTransformations}.
As described in appendix \ref{sec:polynomial-vs-Newton}, both of these approaches build on the polynomial input-normal/output-diagonal transformation proposed in \cite{Corbin2025b} and should exhibit approximately the same level of accuracy.
However, the Newton iteration approach uses various nested Newton iterations to implicitly evaluate the balancing transformation, whereas the polynomial approach outlined in \cref{sec:polynomial-balancing} introduces a few additional steps of polynomial approximations to obtain an explicit approximation of the balancing transformation.
The comparisons in \cref{fig:example13_gridTransformations} confirm that both methods are capable of computing local approximations to the balancing transformation; however, the polynomial approach is much faster, as it does not involve the repeated solution of iterative equations.
Furthermore, the Newton iteration approach retains a lifting-type bottle neck due to its implicit nature, so once truncation of the model order is considered, the fully polynomial approach is the only truly reduced-order approach.
Therefore, in the remaining examples, we will restrict our attention to the fully polynomial approach.

\subsection{2D stable pendulum}\label{sec:example2}
The next example we consider is a forced damped pendulum with torque input and position measurement, as considered in \cite{Newman1999}.
The model is described by the governing equations
\begin{equation}\label{eq:pendulum-FOM}
  \begin{split}
    \dot{x}_1 & = x_2,                                                                   \\
    \dot{x}_2 & = -\frac{G}{L} \sin(x_1) - \frac{k}{mL^2}x_1 - \frac{b}{mL^2}x_2 + u(t), \\
    y         & = x_1,
  \end{split}
\end{equation}
where $G=10$, $L=20$, $m=1/40$, $b=2$, and $k=1$.

We focus in this example specifically on the topic of bijectivity of the balancing transformation \textit{before} truncation of the model dimension.
Prior to truncation, solution trajectories in the balanced realization should map identically to solutions in the original coordinates.
In \cite{Newman1999}, the authors derive analytical solutions for the energy functions for this system, which exhibits special structure, and they proceed to use a discretization-based algorithm to compute the balancing transformation.
Their results highlight that, even with analytical solutions for the energy functions, it is a significant challenge to compute a well-behaved balancing transformation for which solution trajectories in the original coordinate system exactly match the solution trajectories in the computed balanced coordinates.
Errors introduced by their approach lead to visible mismatches between the original solution trajectories and the transformed model's solution trajectories.

We begin by computing the energy functions, shown in \cref{fig:example31_energyFunctions} along with their residuals as an error metric.
For the region of the state-space shown with $-1 \leq x_i \leq 1$, a very good fit can be obtained with degree 4 approximations to the energy functions.

\begin{figure}[ht!]
  \centering
  \begin{subfigure}[b]{0.48\textwidth}
    \centering
    \includegraphics[width=.49\linewidth]{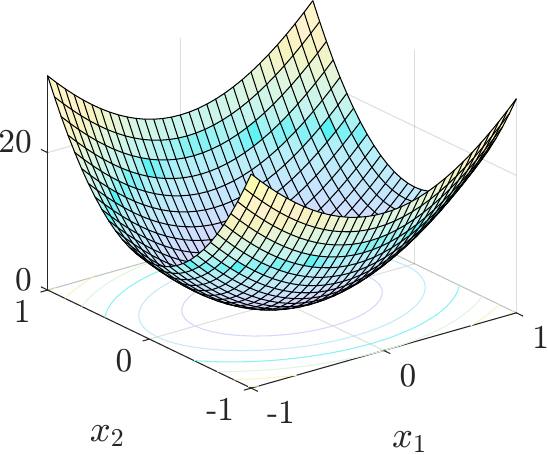}
    \includegraphics[width=.49\linewidth]{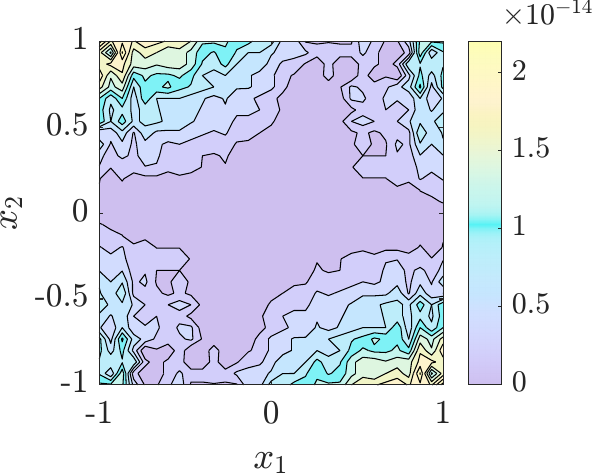}
    \caption{Controllability energy function and its residual.}
    \label{fig:example31_ctrb_res}
  \end{subfigure}
  \hfill
  \begin{subfigure}[b]{0.48\textwidth}
    \centering
    \includegraphics[width=.49\linewidth]{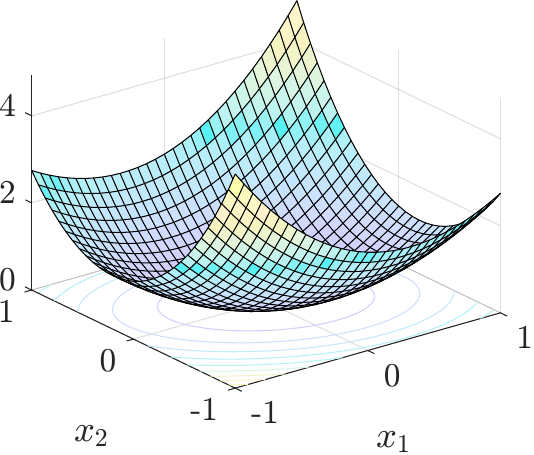}
    \includegraphics[width=.49\linewidth]{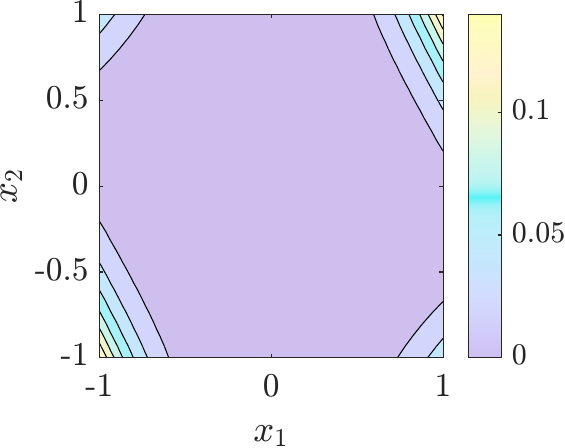}
    \caption{Observability energy function and its residual.}
    \label{fig:example31_obsv_res}
  \end{subfigure}
  \caption{2D stable pendulum: degree 4 energy function approximations and their residuals.}
  \label{fig:example31_energyFunctions}
\end{figure}

The first test case considered in \cite{Newman1999} is the system's response to the initial condition $\bx(0) = [0.1, 0.1]^\top$ with zero input.
We compare the full-order, fully nonlinear response with a cubic approximation to the balanced realization
in \cref{fig:example31_impulse}.
In \cite{Newman1999}, the solutions in the balanced and original coordinates showed some mismatch, whereas our transformation better retains equivalence of the solutions.
It is worth noting that for this relatively small initial condition, the model operates essentially in its linear regime.
This can be seen by observing that the coordinate grid overlaid in the phase-space plots rotates and stretches, but does not warp in a nonlinear fashion.
Indeed, since we do not consider any truncation at the moment, even the linear balancing transformation would work well here, as its invertibility guarantees its bijectivity.

\begin{figure}[ht!]
  \centering
  \begin{subfigure}[b]{0.44\textwidth}
    \centering
    \includegraphics[width=\linewidth]{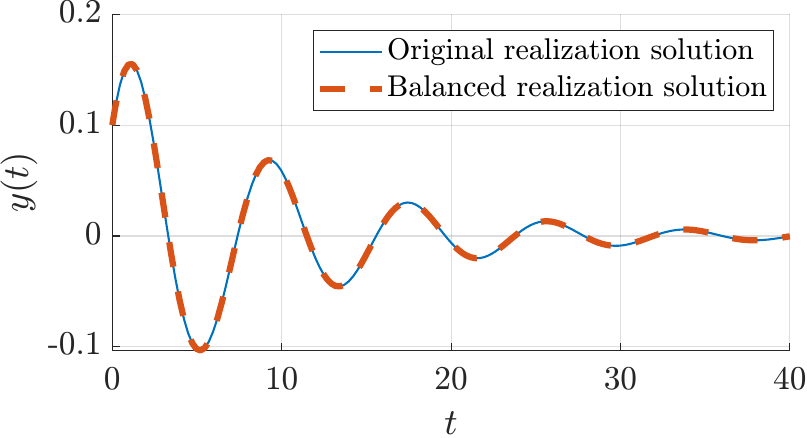}
    \caption{Angular position output $y(t) = x_1(t)$.\\\,}
    \label{sfig:example31_impulse}
  \end{subfigure}
  \hfill
  \begin{subfigure}[b]{0.5\textwidth}
    \centering
    \includegraphics[width=0.49\linewidth]{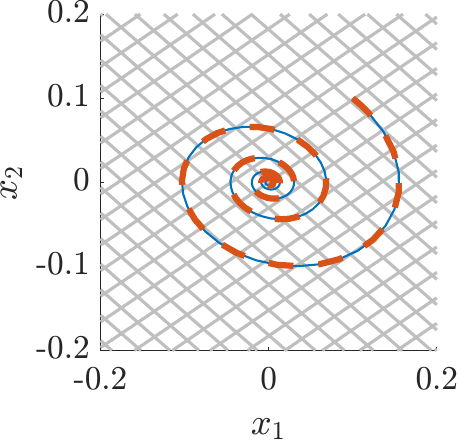}
    \includegraphics[width=0.49\linewidth]{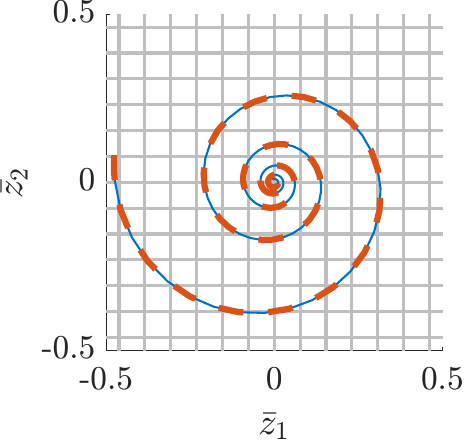}
    \caption{Solution state trajectory in the original (left) and balanced (right) coordinates.}
    \label{fig:example31_impulse_z}
  \end{subfigure}
  \caption{2D stable pendulum: unforced response for initial condition $\bx(0) = [0.1, 0.1]^\top$ with a degree 3 balanced realization approximation.}
  \label{fig:example31_impulse}
\end{figure}

The second test case considered is the response to a sinusoidal forcing $u(t) = 0.5 \sin(t/\pi)$ and the same nonzero initial condition, shown in \cref{fig:example31_sin1}.
Similar to the unforced case, our method maintains bijectivity of the balancing transformation, which again is perhaps unsurprising since the response remains in the linear regime.
Still, the results shown in \cite{Newman1999} show a small error, indicating the difficulty of computing a nonlinear balancing transformation without introducing additional error.

\begin{figure}[ht!]
  \centering
  \begin{subfigure}[b]{0.44\textwidth}
    \centering
    \includegraphics[width=\linewidth]{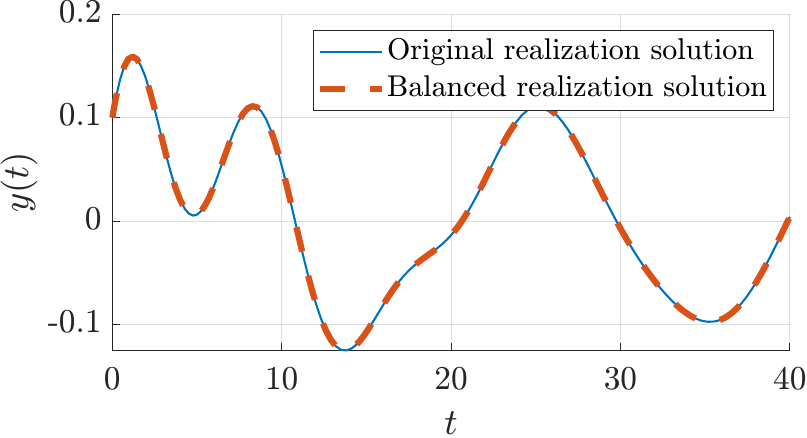}
    \caption{Angular position output $y(t) = x_1(t)$.\\\,}
    \label{sfig:example31_sin1}
  \end{subfigure}
  \hfill
  \begin{subfigure}[b]{0.5\textwidth}
    \centering
    \includegraphics[width=0.49\linewidth]{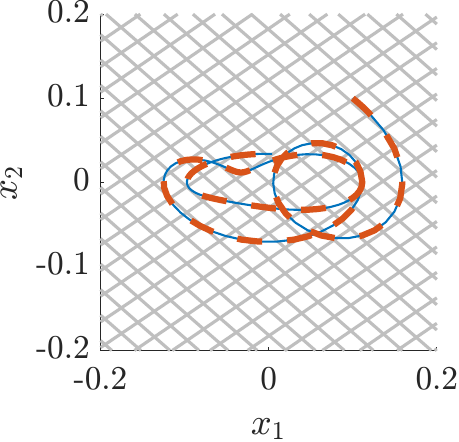}
    \includegraphics[width=0.49\linewidth]{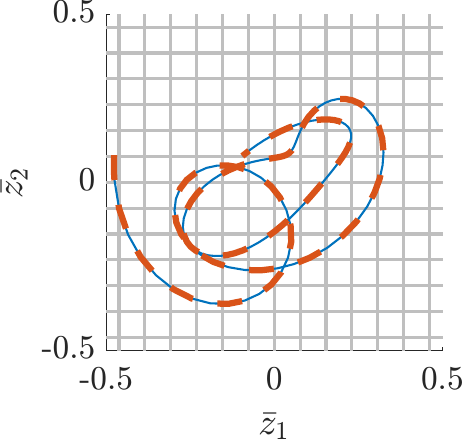}
    \caption{Solution state trajectory in the original (left) and balanced (right) coordinates.}
    \label{fig:example31_sin1_z}
  \end{subfigure}
  \caption{2D stable pendulum: response to sinusoidal input $u(t) = 0.5 \sin(t/\pi)$ for initial condition $\bx(0) = [0.1, 0.1]^\top$ with a degree 3 balanced realization approximation.}
  \label{fig:example31_sin1}
\end{figure}

As a final test, noting that the previous two test cases from \cite{Newman1999} remained in the linear regime, we increase the initial condition to $\bx(0) = [1, 1]^\top$ and the forcing to $u(t) = 5 \sin(t/\pi)$, as shown in \cref{fig:example31_sin2_d4}.
The reader should take note that the axis limits now reflect a larger portion of the state space.
The nonlinearities actually contribute over this region, as illustrated by the curvature present in the coordinate grids overlaid on the phase-space plots.
Now, we can see that the state components stray outside of the $-1 \leq x_i \leq 1$ range in which we had good energy function approximations, and correspondingly we have some error in the state trajectory.
The source of this error is the polynomial truncation (originally in the dynamics, and then in the energy functions and transformations), as the cubic approximation is failing to capture the full behavior of $\sin(x_1)$ on this domain.
The issue can be resolved by computing a higher-order approximation to the balanced realization.
Instead of the cubic balancing transformation, the results in \cref{fig:example31_sin2_d8} feature a degree 7 balancing transformation, and we see that the accuracy of the solution in the balanced coordinate system is recovered.

\begin{figure}[ht!]
  \centering
  \begin{subfigure}[b]{0.44\textwidth}
    \centering
    \includegraphics[width=\linewidth]{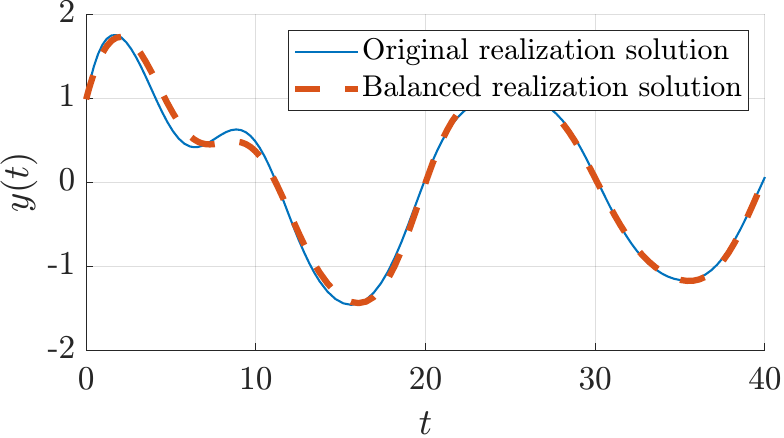}
    \caption{Angular position output $y(t) = x_1(t)$.\\\,}
    \label{sfig:example31_sin2_d4}
  \end{subfigure}
  \hfill
  \begin{subfigure}[b]{0.5\textwidth}
    \centering
    \includegraphics[width=0.45\linewidth]{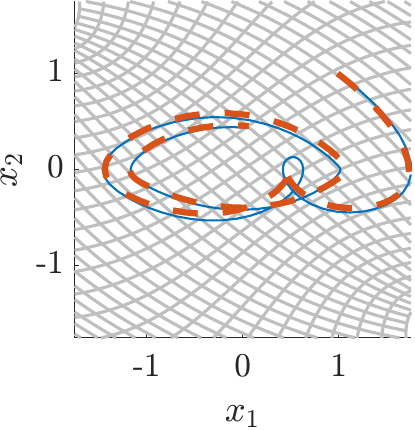}
    \includegraphics[width=0.45\linewidth]{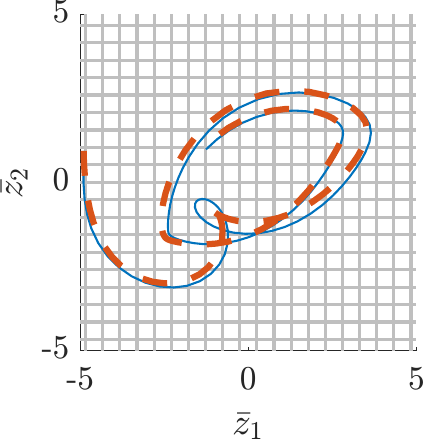}
    \caption{Solution state trajectory in the original (left) and balanced (right) coordinates.}
    \label{fig:example31_sin2_z_d4}
  \end{subfigure}
  \caption{2D stable pendulum: response to sinusoidal input $u(t) = 5 \sin(t/\pi)$ for initial condition $\bx(0) = [1, 1]^\top$ with a degree 3 balanced realization approximation.
  The cubic approximation fails to capture the full nonlinearity $\sin(x)$, leading to the mismatch between the trajectories.}
  \label{fig:example31_sin2_d4}
\end{figure}

\begin{figure}[ht!]
  \centering
  \begin{subfigure}[b]{0.44\textwidth}
    \centering
    \includegraphics[width=\linewidth]{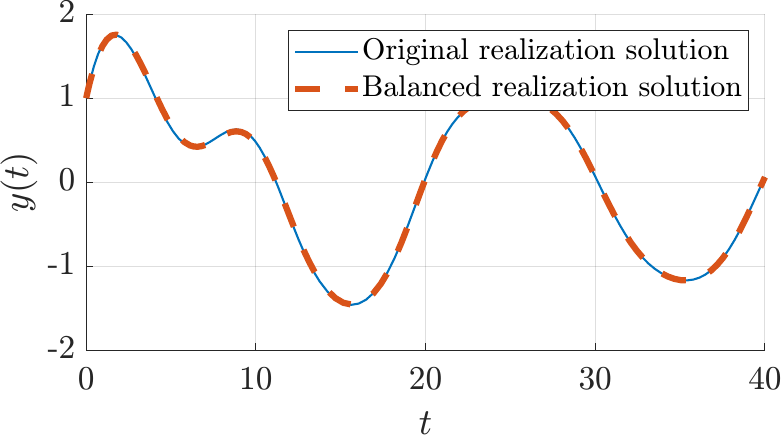}
    \caption{Angular position output $y(t) = x_1(t)$.\\\,}
    \label{sfig:example31_sin2_d8}
  \end{subfigure}
  \hfill
  \begin{subfigure}[b]{0.5\textwidth}
    \centering
    \includegraphics[width=0.45\linewidth]{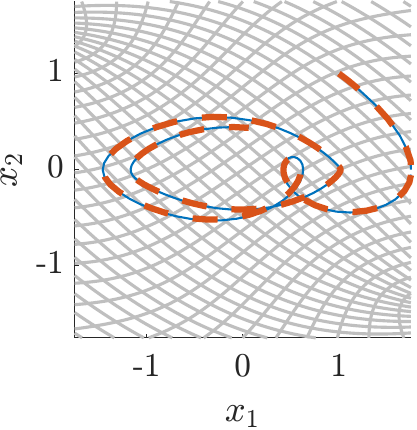}
    \includegraphics[width=0.45\linewidth]{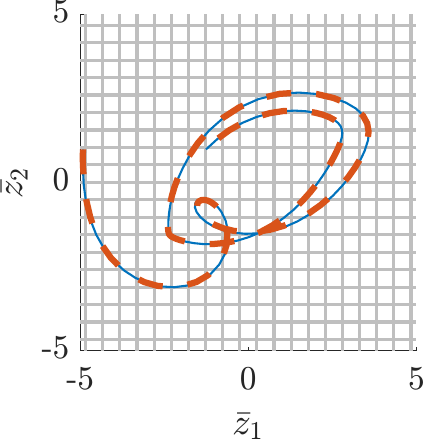}
    \caption{Solution state trajectory in the original (left) and balanced (right) coordinates.}
    \label{fig:example31_sin2_z_d8}
  \end{subfigure}
  \caption{2D stable pendulum: response to sinusoidal input $u(t) = 5 \sin(t/\pi)$ for initial condition $\bx(0) = [1, 1]^\top$ with a degree 7 balanced realization approximation.
  The degree 7 approximation is able to more accurately capture the $\sin(x)$ nonlinearity on this domain than the cubic approximation.}
  \label{fig:example31_sin2_d8}
\end{figure}

The results so far have focused on the local region near the equilibrium point, and already we have seen that truncation errors can render the approximations inherently local.
If we zoom out and look at the transformation of the coordinate grid under the balancing transformation over a larger region of the state space, we can observe another important limitation of the nonlinear approach.
In \cref{fig:example31_zoom}, we compare a linear transformation on the left with a cubic transformation on the right.
Recall that at this point, we have not performed any truncation; therefore, the linear balancing transformation is given by an invertible matrix that makes the mapping globally bijective.
However, once we introduce nonlinearity into the transformation, suddenly the inverse transformation is no longer guaranteed to exist outside of a local neighborhood of the origin.
The curvature introduced by the balancing transformation can result in the grid lines overlapping, in which case the one-to-one property that is critical for a valid coordinate transformation is lost.
As a result, trajectories that stray too far from the origin can start to exhibit anomalous behavior, and eventually numerical instabilities arise.

\begin{figure}[ht!]
  \centering
  \begin{subfigure}[b]{0.44\textwidth}
    \centering
    \includegraphics[width=\linewidth]{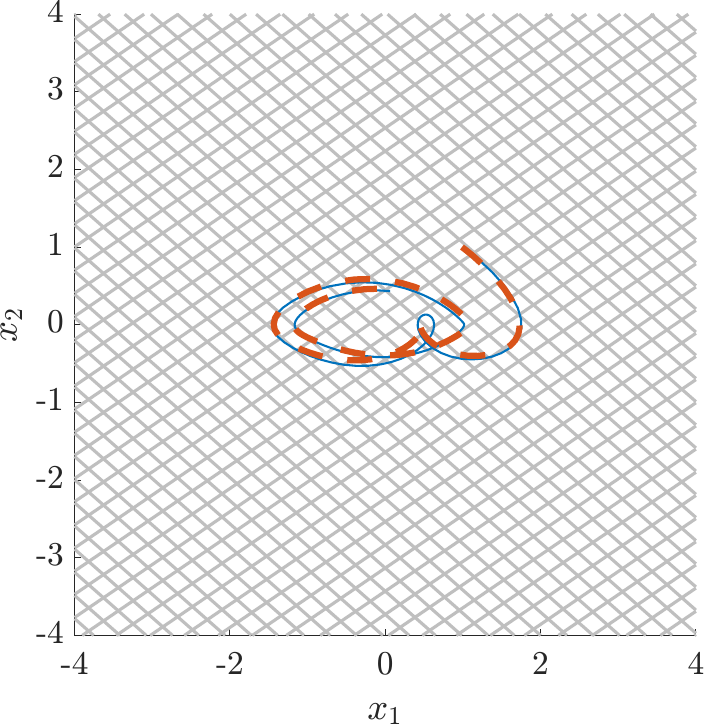}
    \caption{Linear transformation.}
    \label{fig:example31_sin2_x_d2_zoom}
  \end{subfigure}
  \hfill
  \begin{subfigure}[b]{0.44\textwidth}
    \centering
    \includegraphics[width=\linewidth]{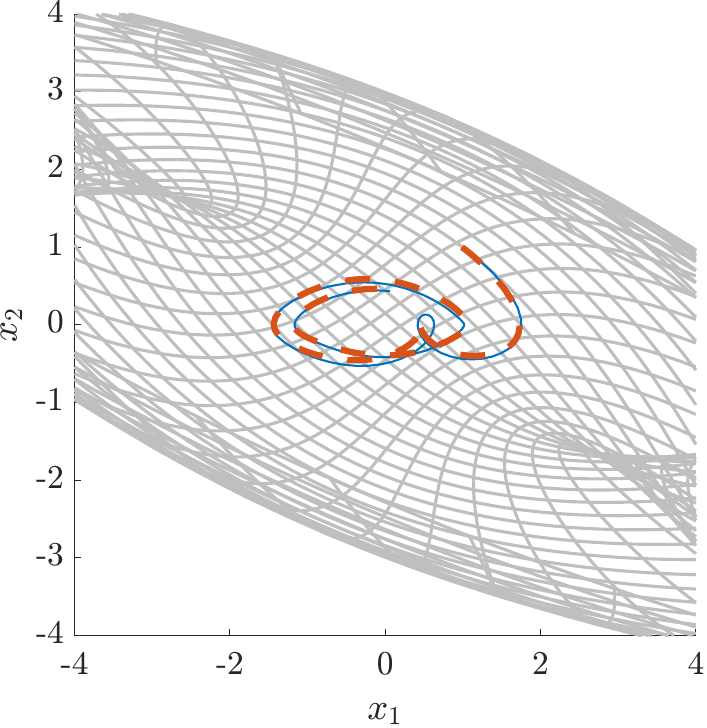}
    \caption{Cubic transformation.}
    \label{fig:example31_sin2_x_d4_zoom}
  \end{subfigure}
  \caption{2D stable pendulum: zoomed out view showing the warping and folding of the coordinate grid under the degree 3 polynomial balancing transformation approximation compared with the globally valid linear transformation. }
  \label{fig:example31_zoom}
\end{figure}

\subsection{3D illustrative example}\label{sec:example3}
In the next example, we illustrate the idea that the nonlinear ROM obtained via nonlinear balanced truncation represents projection of the dynamics onto a curved manifold, as opposed to the linear subspace that is given by linear balanced truncation.
To that end, we consider the following academic model:
\begin{align*}
  \dot{x}_1 & =  - 0.172x_1^3 - 0.172x_1^2 - 0.739x_1 - 0.172x_2^2 + 1.57x_2 - 0.172x_3 + 5.09 u(t),                \\
  \dot{x}_2 & =  1.72x_1^3 + 1.72x_1^2 - 1.57x_1 + 1.72x_2^2 - 6.26x_2 + 1.72x_3 + 4.82 u(t),                       \\
  \dot{x}_3 & = 0.515x_1^2x_2^2 - 1.72x_2 - x_3 - 0.172x_1 + 0.343x_1x_3 - 3.43x_2x_3 + 0.343x_1x_2^2               \\
            & \quad - 8.13x_1^2x_2 + 0.515x_1^2x_3 - 3.43x_1^3x_2 + 0.476x_1^2 + 1.56x_1^3 + 11.5x_2^2 + 0.859x_1^4 \\
            & \quad  - 3.43x_2^3 + 0.515x_1^5 + (0.597 + - 15.3x_1^2 - 10.2x_1 - 9.64x_2 )u(t),                     \\
  y         & = 0.597x_1^3 + 0.597x_1^2 + 5.09x_1 + 0.597x_2^2 - 4.82x_2 + 0.597x_3.
\end{align*}
The computed balancing transformation is
\begin{align*}
  \bx & = \bar\bPhi(\bar \bz) = \begin{bmatrix}
                                  -\bar{z}_1 \\
                                  -\bar{z}_2 \\
                                  \bar{z}_1^3 - \bar{z}_1^2 - \bar{z}_2^2 + \bar{z}_3
                                \end{bmatrix}.
\end{align*}
Truncation of the last state amounts to setting $\bar{z}_3=0$, which defines a manifold given by the set of points
$\cM = \{\bx\in\real^3 : \bx = \bar\bPhi\left(\begin{bmatrix}
      \bar{z}_1, \,
      \bar{z}_2, \,
      0
    \end{bmatrix}^\top\right) \}$ for $\bar{z}_1,\bar{z}_2 \in \real$.
This manifold, along with the linear subspace approximation corresponding to linear balancing, can be seen in \cref{fig:example32_d4}.
Also shown in each of these figures is a trajectory corresponding to the initial condition $\bx(0) = [-1,-2,-4]^\top$ for the FOM (green) and ROM (red).
This initial condition lies on the nonlinear balanced manifold, which indicates that it is among the states that are nonlinearly controllable and observable, and we can see that the nonlinear approach more accurately captures the trajectory.
\Cref{fig:example32_d4_noise} shows a trajectory produced in response to a white noise input, which provides an alternate way to understand which states are reachable for the model \cite{Newman1998}.
We can see that the system's response to white noise lives on the manifold, which confirms that the balanced manifold is capturing the states that are nonlinearly controllable and observable.

\begin{figure}[ht!]
  \centering
  \begin{subfigure}[b]{0.65\textwidth}
    \centering
    \includegraphics[width=.48\linewidth]{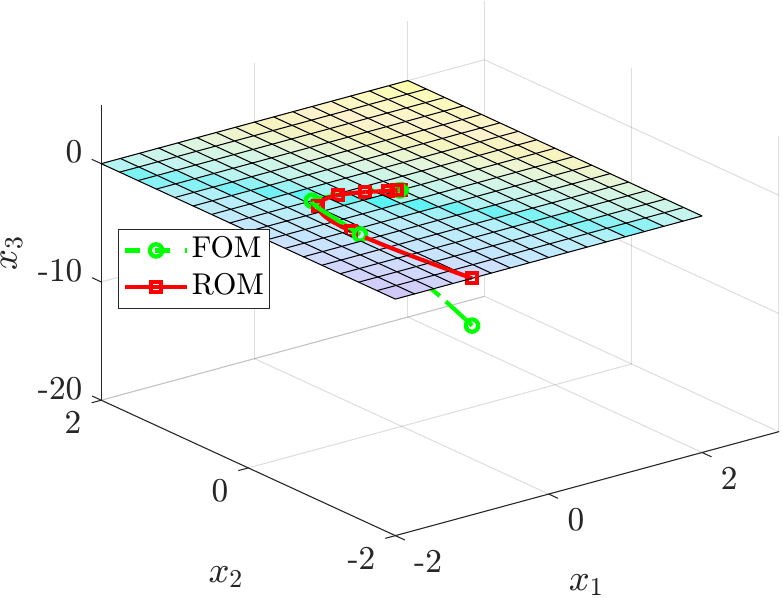}
    \includegraphics[width=.48\linewidth]{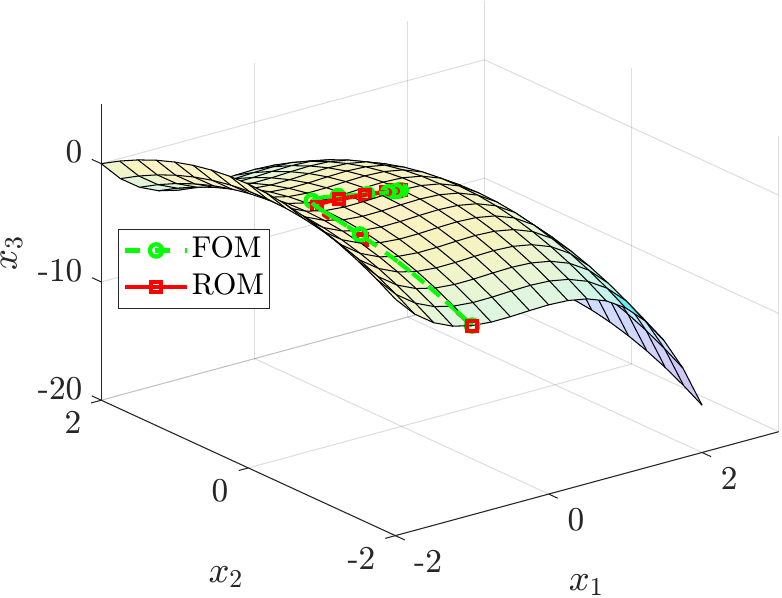}
    \caption{Linear balanced subspace vs. nonlinear balanced manifold, and a simulated unforced response to the initial condition $\bx(0) = [-1,-2,-4]^\top$.}
    \label{fig:example32_d4}
  \end{subfigure}
  \hfill
  \begin{subfigure}[b]{0.325\textwidth}
    \centering
    \includegraphics[width=\linewidth]{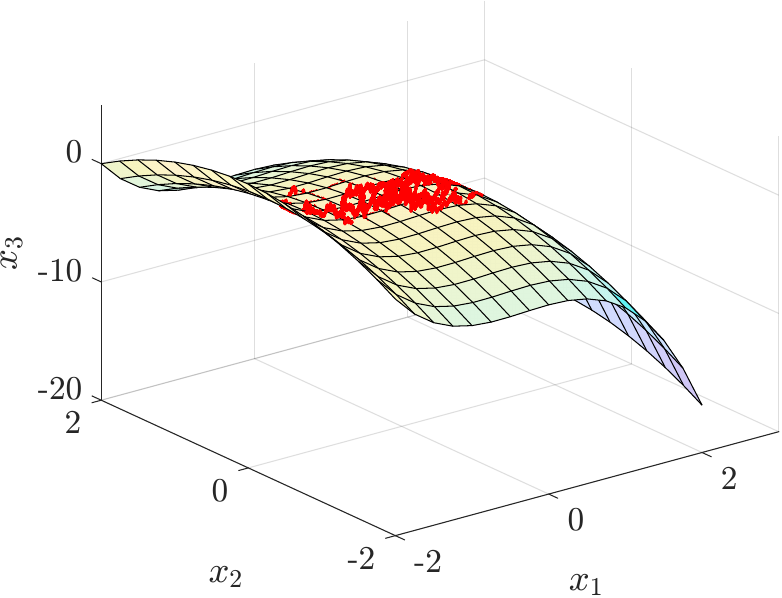}
    \caption{Nonlinear balanced manifold, and a simulated response to white noise excitation.}
    \label{fig:example32_d4_noise}
  \end{subfigure}
  \caption{3D illustrative example: comparison between linear subspace and nonlinear manifold approximations. The curvature of the manifold permits a better reduced-order representation of the controllable and observable states of a nonlinear system. Intuitively, one can think of the controllable states as those that are most readily reached under white noise excitation.}
  \label{fig:example32_manifold}
\end{figure}

In \cref{fig:example32_d4}, it is clear that the state trajectories are more accurately projected on the nonlinear balancing manifold than the linear balancing subspace;
in \cref{fig:example32_d4_y},
we compare the model outputs.
To be clear, we are not comparing
a nonlinear balanced ROM with a \textit{linearized} ROM.
Both ROMs are nonlinear: the difference is the degree of the balancing transformation approximation used, in one case being a linear transformation producing a linear subspace approximation, and in the other being a cubic transformation producing a manifold approximation.
By better capturing the observable states, the nonlinear balancing ROM has lower output error; for this example, the $L_2$ error for the nonlinear balancing ROM is
$0.54$,
whereas the $L_2$ error for the linear balancing ROM is
$2.59$.
The nonlinear manifold projection also more accurately captures the initial condition compared with the linear subspace projection.

\begin{figure}[ht!]
  \centering
  \includegraphics[width=0.85\linewidth]{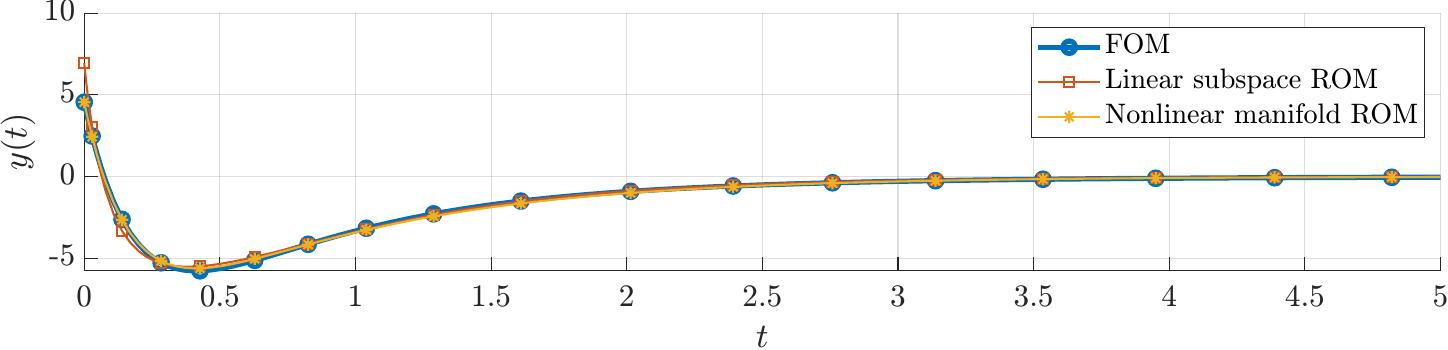}
  \caption{3D illustrative example: comparison of model outputs for linear balanced subspace and nonlinear balanced manifold ROMs.
    The trajectory starts far from the origin and outside of the local balanced subspace due to the curvature of the nonlinear balanced manifold, leading nonlinear balancing to exhibit lower output error. }
  \label{fig:example32_d4_y}
\end{figure}

Finally, we share the explicit balanced realization for the model, along with the ROM.
The balanced realization prior to truncation is
\begin{align*}
  \dot{\bar{z}}_1 & = 1.57\bar{z}_2 - 0.739\bar{z}_1 + 0.172\bar{z}_3  -5.09u(t), \\
  \dot{\bar{z}}_2 & = - 1.57\bar{z}_1 - 6.26\bar{z}_2 - 1.72\bar{z}_3  -4.82u(t), \\
  \dot{\bar{z}}_3 & =  0.172\bar{z}_1 + 1.72\bar{z}_2 - \bar{z}_3 + 0.597u(t),    \\
  y               & = 4.82\bar{z}_2 - 5.09\bar{z}_1 + 0.597\bar{z}_3,
\end{align*}
and the ROM, obtained by setting $\bar{z}_3 = 0$, is
\begin{align*}
  \dot{\bar{z}}_1 & = 1.57\bar{z}_2 - 0.739\bar{z}_1   -5.09u(t), \\
  \dot{\bar{z}}_2 & = - 1.57\bar{z}_1 - 6.26\bar{z}_2  -4.82u(t), \\
  y               & = 4.82\bar{z}_2 - 5.09\bar{z}_1.
\end{align*}
Similar to the 2D illustrative example,
the balanced realization is linear.
In fact, this example was constructed by applying the nonlinear transformation
\begin{align*}
  \bar{\bz} & = \bPsi(\bx)  = \begin{bmatrix}
                                x_1 \\
                                x_2 \\
                                x_3 + x_1^2 + x_2^2 + x_1^3
                              \end{bmatrix},
\end{align*}
to the balanced realization of the following linear time-invariant model from \cite{Holmes2012}:
\begin{align*}
  \dot{x}_1 & = -x_1 + 100 x_3 + u(t),   \\
  \dot{x}_2 & = -2 x_2 + 100 x_3 + u(t), \\
  \dot{x}_3 & = -5 x_3 + u(t),           \\
  y         & = x_1 + x_2 + x_3.
\end{align*}
Therefore, up to a sign change, the computed balancing transformation $\bx = \bar\bPhi(\bar \bz)$ is precisely the inverse of the transformation $\bar{\bz} = \bPsi(\bx)$,
so in this case the balancing transformation is exactly cubic.

\subsection{4D double pendulum}\label{sec:example4}
Our next example is the classic damped double pendulum considered in \cite{Fujimoto2008a}.
As shown in that work, the governing equations can be derived via Lagrange's equations.
The resulting state-space model is given by
\begin{align*}
  \dot{\bx} & = \begin{bmatrix}
                  x_3 \\
                  x_4 \\
                  \bM(\bx)^{-1} \left(
                  \begin{bmatrix}
        \partial L(\bx)/\partial x_1 \\
        \partial L(\bx)/\partial x_2
      \end{bmatrix}
                  - \dot{\bM}(\bx) \begin{bmatrix}
                         x_3 \\ x_4
                       \end{bmatrix}
                  -  \begin{bmatrix}
           \mu_1 x_3 \\ \mu_2 x_4
         \end{bmatrix}
                  \right)
                \end{bmatrix} +
  \begin{bmatrix}
    0 \\ 0 \\ \bM(\bx)^{-1}\begin{bmatrix}
      1 \\ 0
    \end{bmatrix}
  \end{bmatrix} u(t),                                       \\
  \by       & = \begin{bmatrix}
                  \ell_1  \sin(x_1) + \ell_2  \sin(x_1 + x_2) \\
                  \ell_1  \left(1-\cos(x_1)\right) + \ell_2  \left(1-\cos(x_1 + x_2)\right)
                \end{bmatrix},
\end{align*}
where the Lagrangian is given by the kinetic and potential energies as
\begin{align*}
  L(\bx) & = T(\bx) - V(\bx)                                        \\
         & = \frac{1}{2} \begin{bmatrix}
                           x_3 \\ x_4
                         \end{bmatrix}^\top \bM(\bx) \begin{bmatrix}
                                                       x_3 \\ x_4
                                                     \end{bmatrix}
  -
  m_1  g  \ell_1  \cos(x_1)
  - m_2  g (\ell_1  \cos(x_1) + \ell_2  \cos(x_1 + x_2)),
\end{align*}
and the state-dependent symmetric mass matrix is
\begin{align*}
  \bM(\bx) & = \begin{bmatrix}
                 m_1 \ell_1^2 + m_2 \ell_1^2 + m_2 \ell_2^2 + 2 m_2 \ell_1 \ell_2 \cos (x_2) & m_2 \ell_2^2 + m_2 \ell_1 \ell_2 \cos (x_2) \\
                 m_2 \ell_2^2 + m_2 \ell_1 \ell_2 \cos (x_2)                                 & m_2 \ell_2^2
               \end{bmatrix}.
\end{align*}
The input corresponds to a torque at the attachment point of the first link to the ceiling, whereas the outputs represent the horizontal and vertical displacements of the tip of the pendulum.
Additional details can be found in the original model derivations\footnote{Note: there is a typo in \cite{Fujimoto2008a} in which the one-half factor in the kinetic energy is missing.} in \cite{Fujimoto2008a}.
For the model parameters, we take
$g = 9.8$,
$m_1 = m_2 = 1$,
$\ell_1 = \ell_2 = 1$,
and
$\mu_1 = \mu_2 = 1$, as in \cite{Fujimoto2008a}.

As in \cite{Fujimoto2008a}, we compare the outputs
of the FOM with ROMs of order $r=2$
for a sinusoidal input signal $u(t) = \sin(2.5 t)$.
In \cite{Fujimoto2008a}, the authors compare a nonlinear ROM based on nonlinear balancing with a \textit{linearized} ROM based on linear balancing.
While this demonstrates the advantage of nonlinear modeling over linearization, it does not isolate whether the performance gain stems from the nonlinear balancing transformation itself or simply from retaining nonlinear terms in the ROM dynamics.
To decouple these effects,
we add a third ROM to the comparison: a nonlinear ROM obtained by projecting the nonlinear FOM onto the linear balanced subspace.
This permits a more nuanced assessment of the benefits provided specifically by the nonlinear manifold reduction.
\Cref{fig:example15_forcedResponse} shows the outputs of the FOM and the ROMs, where both nonlinear ROMs are of degree~5.

\begin{figure}[htbp]
  \centering
  \includegraphics[width = 0.75\textwidth]{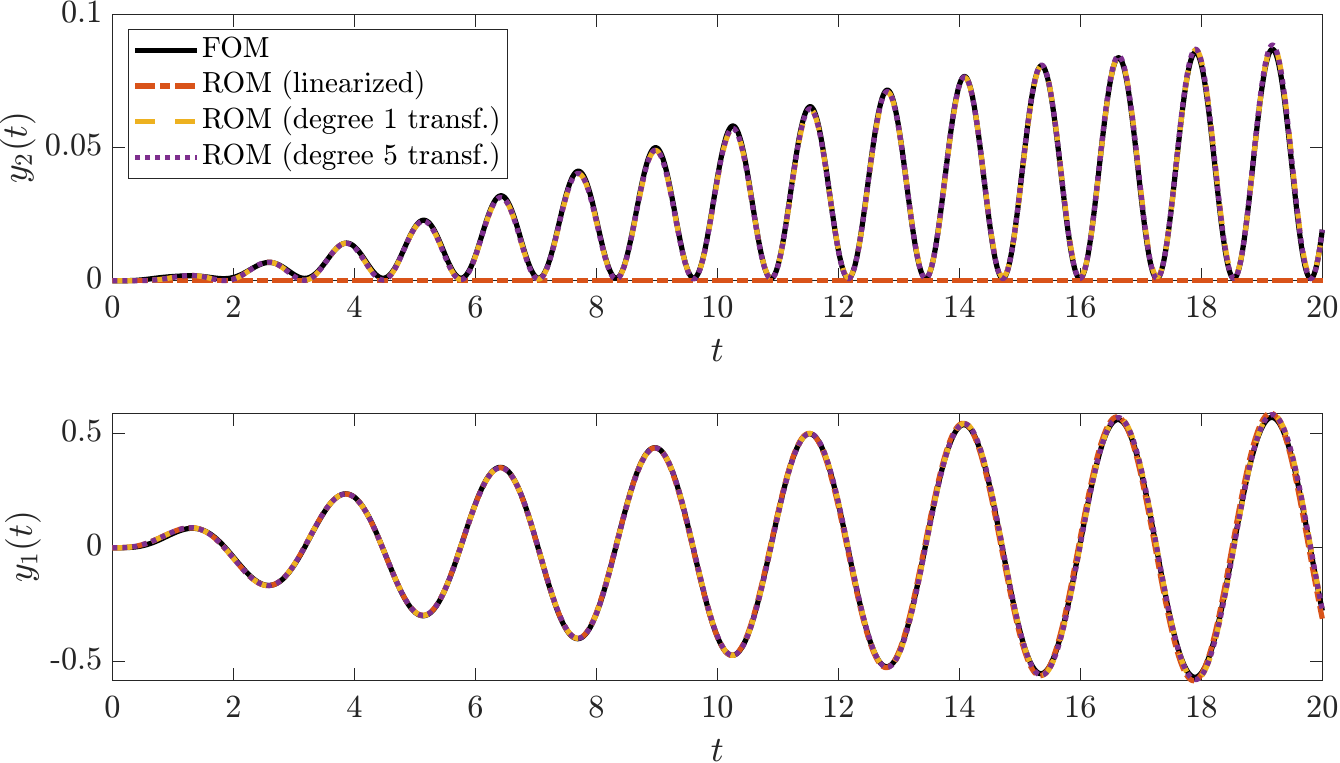}
  \caption{4D double pendulum: comparison of the horizontal ($y_1(t)$) and vertical ($y_2(t)$) tip displacement model outputs for the FOM and ROMs.
    The linearized ROM is the only model that fails to capture the output $y_2(t)$; otherwise, both the linear subspace ROM and the degree 5 balanced manifold ROM work well.}
  \label{fig:example15_forcedResponse}
\end{figure}

As shown in \cref{fig:example15_forcedResponse}, all three ROMs---even the linearized ROM---faithfully reproduce the horizontal tip displacement.
However, the linearized ROM fails entirely to capture the vertical tip displacement dynamics, as also shown in \cite{Fujimoto2008a}.
This is unsurprising, as even the full-order linearized dynamics fail to capture the vertical displacement of the tip, simply because the linearization of $1-\cos(x)$ is $0$.
By including the nonlinear ROM projected onto the linear balancing subspace (omitted in \cite{Fujimoto2008a}),
we see that the output is again well modeled,
showing that it is the linearization of the dynamics rather than the projection onto a linear subspace that hurts the ROM performance.
By visual inspection of the ROM outputs in \cref{fig:example15_forcedResponse}, or by comparing the output errors in \cref{tab:example15},
we see that the nonlinear balanced ROM
only performs marginally better than the linear subspace ROM.
However, this slight improvement in accuracy is accompanied by increased computational complexity, potential for numerical instabilities farther from the origin where the approximations break down, and the additional locality restrictions similar to those illustrated in \cref{fig:example31_zoom}.
In light of these tradeoffs, the linear subspace approximation may be preferable for this example, demonstrating that in some cases,
linear balancing can be an effective approximation of its nonlinear counterpart.
As we will show in \cref{sec:example5}, this is not always the case, and there are examples where nonlinear balancing presents a clear advantage over linear subspace projection.

\begin{table}[htbp]
  \centering
  \caption{4D double pendulum: ROM output errors for horizontal ($y_1(t)$) and vertical ($y_2(t)$) tip displacements. }
  \begin{tabular}{lll}
    \toprule
    ROM Version      & \multicolumn{1}{c}{$y_1$ error} & \multicolumn{1}{c}{$y_2$ error} \\
    \midrule
    Linearized ROM   & $5.33 \cdot 10^{-1}$            & $1.61 \cdot 10^{0}$             \\
    Linear BT ROM    & $2.84 \cdot 10^{-1}$            & $3.07 \cdot 10^{-2}$            \\
    Nonlinear BT ROM & $2.67 \cdot 10^{-1}$            & $2.62 \cdot 10^{-2}$            \\
    \bottomrule
  \end{tabular}
  \label{tab:example15}
\end{table}

\subsection{Nonlinear cantilever beam}\label{sec:example5}
We consider finite element discretization of an Euler--Bernoulli cantilever beam with von Kármán nonlinearity.
The problem setup is shown in \cref{fig:beam}.
\begin{figure}[htbp]
  \centering
  \includegraphics[width = 0.6\textwidth]{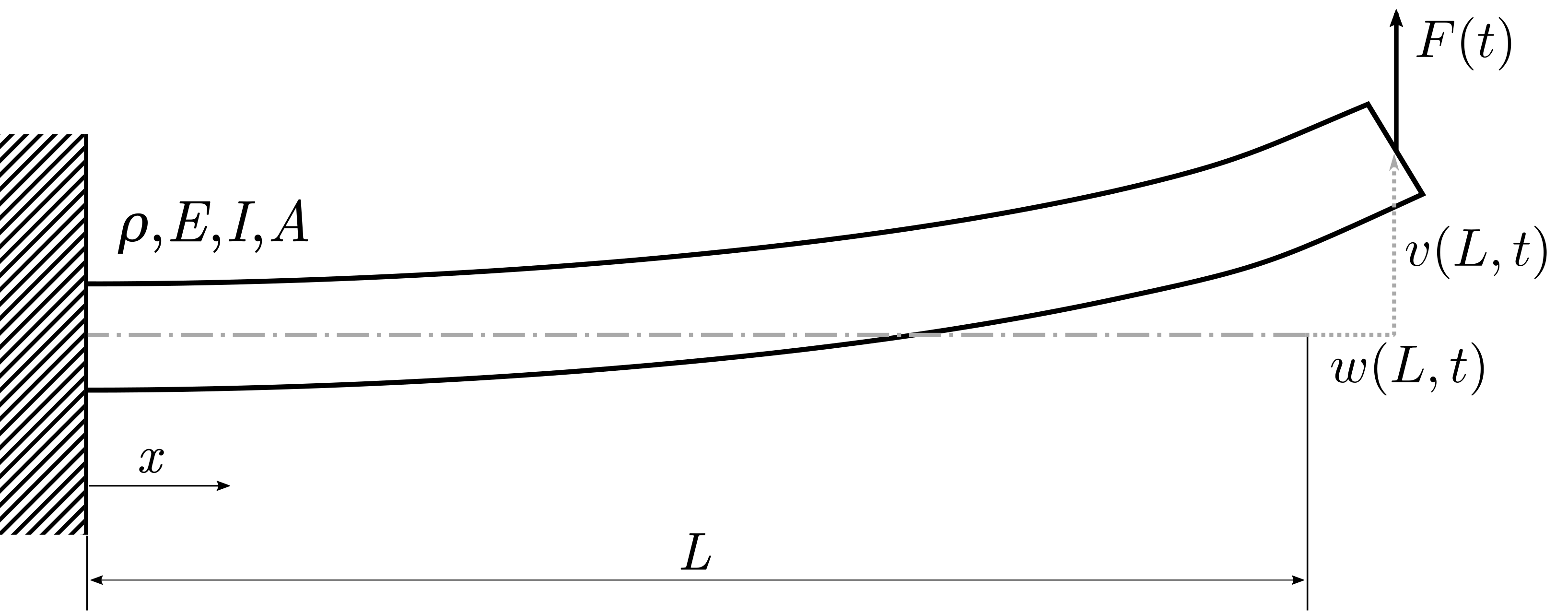}
  \caption{Nonlinear cantilever beam: the deformed beam deviates from the dot-dashed centerline by $w(x,t)$ and $v(x,t)$ in the horizontal and vertical directions, respectively. We are interested in the response of the tip to the transverse forcing $F(t)$.}
  \label{fig:beam}
\end{figure}

\subsubsection{Full-order model derivation}
The governing equations for the nonlinear Euler--Bernoulli beam are
\begin{align*}
  0                                         & = \rho A \frac{\partial^2 v}{\partial t^2} - \frac{\partial }{\partial x} \left(N_{xx} \frac{\partial v}{\partial x} \right)
  + \frac{\partial^2 M_{xx}}{\partial x^2}, &
  0                                         & = \rho A \frac{\partial^2 w}{\partial t^2}
  - \frac{\partial N_{xx}}{\partial x}    ,
\end{align*}
where
$v(x,t)$
and
$w(x,t)$
represent the beam's
transverse
and
longitudinal
deflections, respectively, as functions of position along the beam $x \in [0,L]$ and time $t > 0$.
The quantities $N_{xx}(x,t)$ and $M_{xx}(x,t)$ are the axial force and bending moment.
The von Kármán geometric nonlinearity provides a coupling between transverse deflections and longitudinal strains and is given by the following strain-displacement relations:
\begin{align*}
  N_{xx}(x,t) & = E A \left[\frac{\partial w}{\partial x}  + \frac{1}{2}\left(\frac{\partial v}{\partial x} \right)^2 \right] , &
  M_{xx}(x,t) & = E I \frac{\partial^2 v}{\partial x^2}.
\end{align*}
The model's scalar parameters are the
density $\rho$,
elastic modulus $E$,
second moment of area $I$,
length $L$,
and
cross-sectional area $A$.
The boundary conditions for the fixed end of the beam are
$v(0,t) = w(0,t) = \left[ \partial w(x,t)/\partial x \right]_{x=0} = 0.$
The boundary conditions for the forced end of the beam are
\begin{align*}
  N_{xx}(L,t)                                                                                  & = 0 ,    &
  \left[\frac{\partial v}{\partial x} N_{xx} + \frac{\partial M_{xx}}{\partial x}\right]_{x=L} & = F(t) , &
  M_{xx}(L,t)                                                                                  & = 0.
\end{align*}
Thus, the forcing $F(t)$ will enter though the secondary variables in the finite element formulation \cite{Reddy2004}.

Upon discretization with $N$ finite elements, the model takes the form of a second-order mechanical system
\begin{align*}
  \bM \ddot{\bq} + \bD \dot{\bq} + \bK_1 \bq + \bK_2 \kronF{\bq}{2} + \bK_3 \kronF{\bq}{3} & = \tilde\bB \bu(t),                      &
  \by                                                                                      & = \tilde\bC_1 \bq + \tilde\bC_2 \dot\bq,
\end{align*}
where
$\bM, \bD, \bK_1 \in \real^{3N \times 3N}$,
$\bK_2 \in \real^{3N \times (3N)^2}$,
$\bK_3 \in \real^{3N \times (3N)^3}$,
$\tilde\bB \in \real^{3N \times m}$,
and
$\tilde\bC_1, \tilde\bC_2 \in \real^{p \times 3N}$.
The number of inputs is $m$, and the number of outputs is $p$.
Defining the state vector as the positions and velocities of each node, $\bx = [\bx_1, \bx_2]^\top = [\bq, \dot\bq]^\top$, a state-space realization is then

\begin{align*}
  \dot{\bx} & = \begin{bmatrix}
                  \bx_2 \\
                  -\bM^{-1}\bK_1 \bx_1-\bM^{-1}\bK_2 \kronF{\bx_1}{2}-\bM^{-1}\bK_3 \kronF{\bx_1}{3}-\bM^{-1} \bD \bx_2 + \bM^{-1} \tilde\bB \bu(t)
                \end{bmatrix}, \\
  \by       & = \begin{bmatrix}
                  \tilde\bC_1 & \tilde\bC_2
                \end{bmatrix} \bx.
\end{align*}
This can be rearranged as a cubic polynomial system in Kronecker form as
\begin{align*}
  \dot\bx & = \bA \bx + \bF_2 \kronF{\bx}{2} + \bF_3 \kronF{\bx}{3} + \bB \bu(t), &
  \by     & = \bC \bx.
\end{align*}

The proposed nonlinear balancing implementation is a balance-\textit{then}-reduce procedure, which requires minimality (controllability and observability) of the linearized system.
For example, the computation of the scaling transformation in \cref{sec:polynomial-balancing-transformation} assumes that none of the Hankel singular values are zero in order for the squared singular value functions to be analytic.
The computation of the input-normal/output-diagonal transformation further requires that the Hankel singular values be nonzero and distinct.
This is reminiscent of the linear case, where the original formulation of balanced truncation by Moore \cite{Moore1981} required minimality.
This is a major limitation: reducibility of the model is enabled by the uncontrollable and unobservable modes.
In the linear case, the balance-\textit{and}-truncate procedure of square-root balancing enabled reduction of non-minimal systems \cite{Tombs1987}; we have at present no such formulation for nonlinear balancing.
Therefore, to avoid numerical issues, the FOM must be designed to be both controllable and observable.
We artificially choose $\bB$ and $\bC$ to be identity matrices in order to satisfy this requirement; later, for the purpose of simulating and evaluating the model, we will only apply forcing to the end of the beam and only consider the position of the tip of the beam as output.
The controllability, observability, and stability of the model are also sensitive to the material properties and damping;
we take
$\rho = 8000$ kg/m$^3$,
$E = 210$ GPa,
$I = 0.01$ m$^4$,
$L=1$ m,
and $A = 5$ m$^2$,
and we apply proportional damping given by
$\bD = 0.0001 \bM + 0.0001 \bK_1$.

\subsubsection{One-element model to show the benefit of manifold projection}
First, we consider the case of a single finite element, leading to a state-space model of dimension $n=6$.
The full-order model is
\begin{align*}
  \dot{x}_1 & = u_1(t) +  x_4                                                                                                     ,\qquad
  \dot{x}_2  = u_2(t) +  x_5                                                                                                     ,\qquad
  \dot{x}_3  = u_3(t) +  x_6  ,                                                                                                                               \\
  \dot{x}_4 & = u_4(t)    - 7.88\cdot 10^{7} x_1 - 7880 x_4 - 4.72\cdot 10^{7} x_2^2 + 7.88\cdot 10^{6} x_2 x_3 - 5.25\cdot 10^{6} x_3^2  ,                   \\
  \dot{x}_5 & = u_5(t) +   1.32\cdot 10^{7} x_2 - 1.01\cdot 10^{7} x_3 + 1320 x_5 - 1010 x_6 - 2.05\cdot 10^{8} x_1 x_2                                       \\
            & \quad - 2\cdot 10^{8} x_1 x_3 - 5.91\cdot 10^{7} x_2 x_3^2 - 1.01\cdot 10^{8} x_2^2 x_3 - 1.01\cdot 10^{8} x_2^3 - 5.06\cdot 10^{7} x_3^3 ,     \\
  \dot{x}_6 & = u_6(t) + 1.06\cdot 10^{8} x_2 - 7.75\cdot 10^{7} x_3 + 1.06\cdot 10^{4} x_5 - 7750 x_6 - 8.5\cdot 10^{8} x_1 x_2                              \\
            & \quad  - 1.46\cdot 10^{9} x_1 x_3 - 3.54\cdot 10^{8} x_2 x_3^2 - 9.11\cdot 10^{8} x_2^2 x_3 - 2.02\cdot 10^{8} x_2^3 - 3.57\cdot 10^{8} x_3^3 , \\
  y_1       & = x_1                                                                                                           , \qquad
  y_2       = x_2                                                                                                           , \qquad
  y_3       = x_3                                                                                                           , \qquad
  y_4       = x_4                                                                                                           , \qquad
  y_5       = x_5                                                                                                           , \qquad
  y_6       = x_6.
\end{align*}

The states are, in order: the horizontal, vertical, and rotational positions of the tip, followed by the horizontal, vertical, and rotational velocities of the tip.
The transverse loading $F(t)$ applied to the tip, shown in \cref{fig:beam}, is represented by the control input $u_5(t)$ here.
The primary output of interest $y_1(t) = w(L,t)$ represents the longitudinal displacement of the tip of the cantilever beam, whereas $y_2(t) = v(L,t)$ represents the transverse displacement  of the tip.
Notice that in the linearized dynamics, the states $x_1$ and $x_4$ become decoupled from the other states; this is because longitudinal and transverse deflections are decoupled in linear beam theory.

In \cref{fig:example6_linearized}, we show the natural response of the full-order nonlinear model for an initial condition corresponding to the static deflection shape due to a transverse tip loading.
The figure also shows the linearized response for the same initial condition.
The linearization decays quickly and fails to capture the longitudinal oscillations
resulting from and at exactly twice the frequency of
the transverse oscillations.
This is precisely due to lack of the von Kármán strain coupling in the linearized dynamics.
We seek a ROM that retains this nonlinear effect.
\begin{figure}[htbp]
  \centering
  \includegraphics[width = 0.75\textwidth]{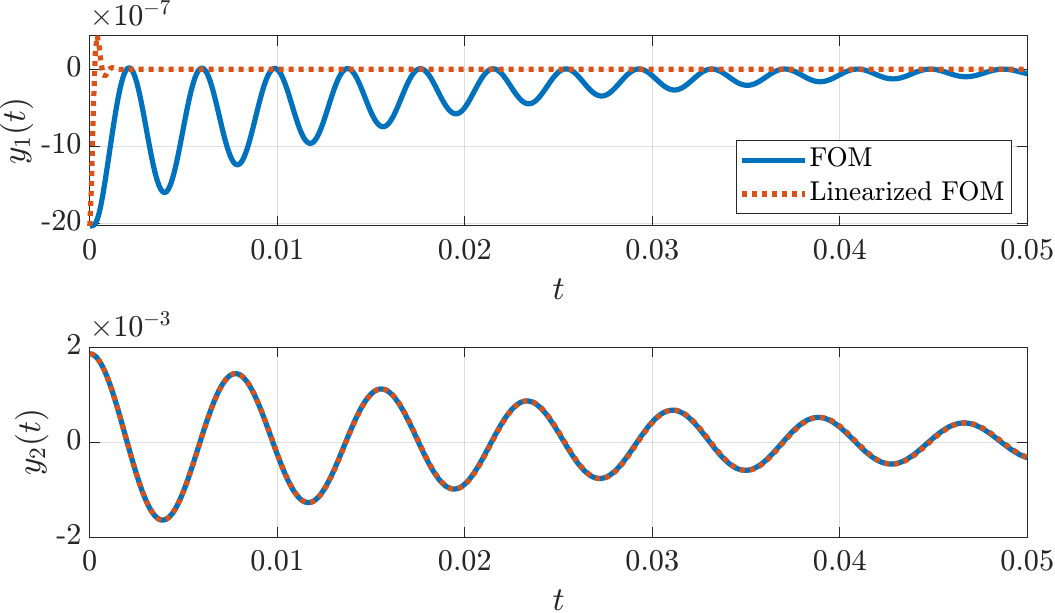}
  \caption{Nonlinear cantilever beam: full-order nonlinear model vs. linearization. Both of these models are formulated in the original coordinate system, but the linearization omits the nonlinear coupling between the longitudinal and transverse dynamics, resulting in the loss of longitudinal displacement information.}
  \label{fig:example6_linearized}
\end{figure}

First, we consider the nonlinear ROM derived by applying linear balancing to the nonlinear model.
The linear balancing transformation is given by the matrix
\begin{align*}
  \bar\bT_1 & = \begin{bmatrix}
                  0     & 0        & 0         & 0                    & -0.011 & 0.00467 \\
                  0.022 & - 0.0202 & - 0.00231 & - 7.96 \cdot 10^{-4} & 0      & 0       \\
                  0.030 & -0.0274  & -0.0164   & -0.00724             & 0      & 0       \\
                  0     & 0        & 0         & 0                    & 63.7   & 63.7    \\
                  -16.6 & -17.7    & 10.3      & -11.9                & 0      & 0       \\
                  -19.5 & -27.9    & 85.4      & -87.7                & 0      & 0
                \end{bmatrix}.
\end{align*}
Since the output map is linear, the composition of the linear output with the linear balancing transformation remains linear, even though the dynamics are nonlinear.
In fact, since the output was chosen to be identity, the output of the balanced realization is given precisely by the balancing transformation.
The first two outputs, representing the longitudinal and transverse tip displacements respectively, are
\begin{equation}\label{eq:linear-output}
  \begin{split}
    y_1 & =    - 0.011 \bar{z}_5 + 0.00467 \bar{z}_6 ,                                               \\
    y_2 & =  0.022 \bar{z}_1 - 0.0202 \bar{z}_2 - 0.00231 \bar{z}_3 - 7.96 \cdot 10^{-4} \bar{z}_4 .
  \end{split}
\end{equation}
In this coordinate system, the longitudinal displacement of the tip of the beam, $y_1$, only depends on the states $\bar{z}_5$ and $\bar{z}_6$;
prior to truncation, these states are still nonlinearly coupled to the other states in the state equation of the dynamics, so the transformed FOM is capable of exhibiting non-zero longitudinal deflections in response to transverse forcing.
However, once we truncate the states $\bar{z}_5$ and $\bar{z}_6$, the output $y_1$ becomes zero and the information is lost, \textit{even though the dynamics remain nonlinear}.
Thus, unlike the pendulum example, the limitation here is in the truncation of a linear balancing transformation that fails to properly encode the redundancy of state components.

If instead we compute a quadratic nonlinear balancing transformation,
the output of the balanced realization is
\begin{align*}
  y_1 & =  - 0.011 \bar{z}_5 + 0.00467 \bar{z}_6 - 2.65 \cdot 10^{-4}  \bar{z}_1^2 + 5.17 \cdot 10^{-4}  \bar{z}_1 \bar{z}_2 - 2.1 \cdot 10^{-4}  \bar{z}_1 \bar{z}_3                   \\
      & \quad  + 0.00269 \bar{z}_1 \bar{z}_4  - 2.6 \cdot 10^{-4}  \bar{z}_2^2 + 1.15 \cdot 10^{-4}  \bar{z}_2 \bar{z}_3 - 0.00213 \bar{z}_2 \bar{z}_4 + 6.2 \cdot 10^{-4}  \bar{z}_3^2 \\
      & \quad - 0.00158 \bar{z}_3 \bar{z}_4 - 0.00342 \bar{z}_4^2,                                                                                                                      \\
  y_2 & =  0.022 \bar{z}_1 - 0.0202 \bar{z}_2 - 0.00231 \bar{z}_3 - 7.96 \cdot 10^{-4}  \bar{z}_4 + 0.00131 \bar{z}_1 \bar{z}_5 - 0.00322 \bar{z}_1 \bar{z}_6                           \\
      & \quad  + 0.00159 \bar{z}_2 \bar{z}_5 - 0.00447 \bar{z}_2 \bar{z}_6 - 0.00151 \bar{z}_3 \bar{z}_5 + 0.00334 \bar{z}_3 \bar{z}_6                                                  \\
      & \quad + 0.00286 \bar{z}_4 \bar{z}_5 + 0.00107 \bar{z}_4 \bar{z}_6,
\end{align*}
where we see that $y_1$ now contains many nonlinear coupling terms.
These additional terms in the balancing transformation are precisely capturing the von Kármán strain relation, encoding the coupling between transverse and longitudinal displacements directly into the output equation.
Therefore, when we truncate the states $\bar{z}_5$ and $\bar{z}_6$, the output $y_1$ retains information.

In \cref{fig:example6_FOMvsROMs}, we compare $T =0.05$ seconds of simulated FOM and ROM responses to initial conditions corresponding to static-deflection shapes of two different tip loads: 0.1 MPa, and 3 MPa.
The smaller initial condition due to smaller loading remains closer to the model's linear regime, whereas the larger initial condition excites the nonlinearities more.
In each of the two cases, the plots on the left compare the original cubic FOM with full-order balanced realization approximations obtained by different degree transformations.
The balanced realizations are all truncated to retain at most cubic dynamics.
On the right, the last two states are truncated from each balanced realization to produce different ROMs.

\begin{figure}[ht!]
  \centering
  \begin{subfigure}[b]{0.49\textwidth}
    \centering
    \includegraphics[width=\linewidth]{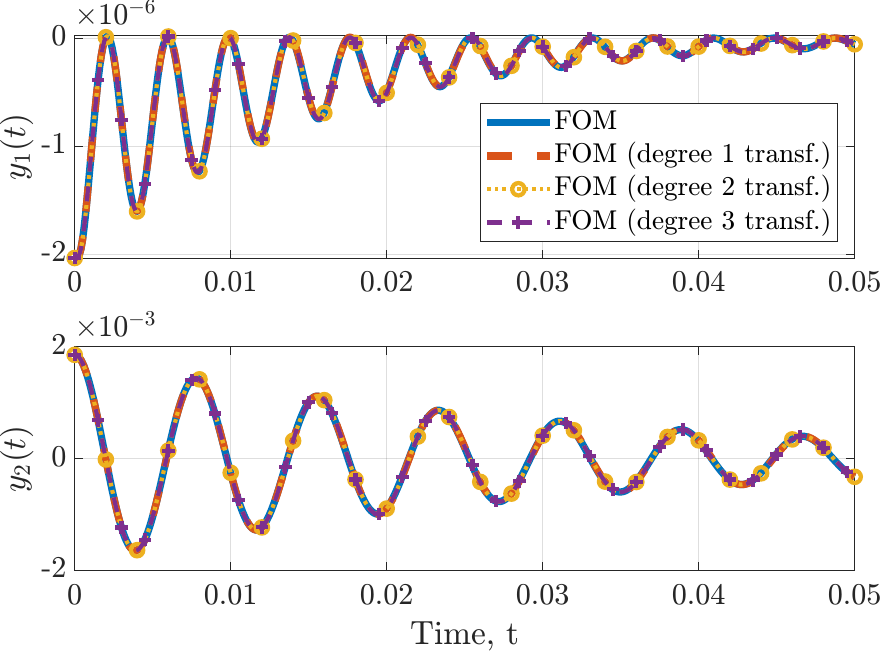}
    \caption{Initial condition 1: FOMs.}
    \label{fig:example6_n6_r6_d4_U0100000_y}
  \end{subfigure}
  \hfill
  \begin{subfigure}[b]{0.49\textwidth}
    \centering
    \includegraphics[width=\linewidth]{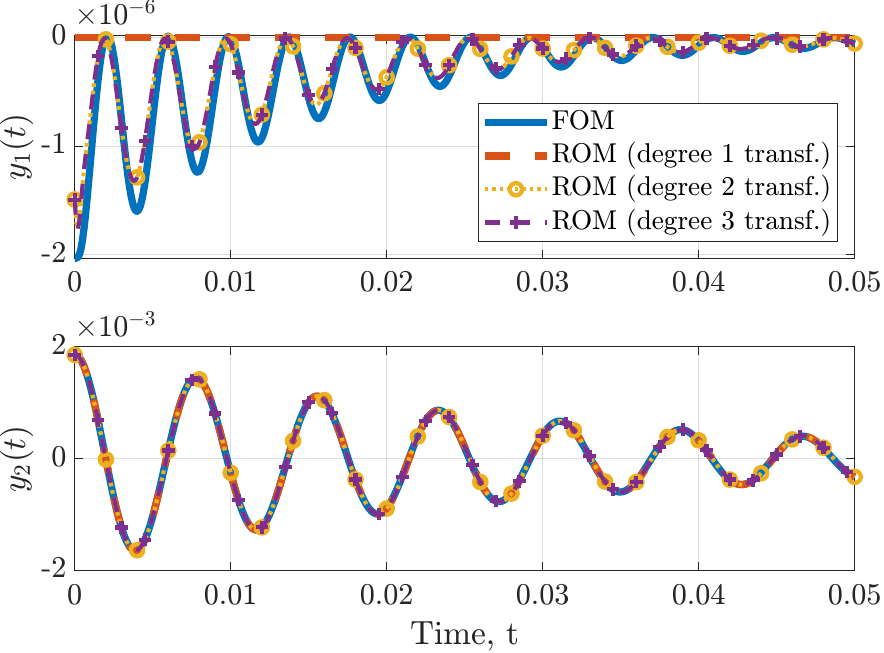}
    \caption{Initial condition 1: FOM vs ROMs.}
    \label{fig:example6_n6_r4_d4_U0100000_y}
  \end{subfigure}

  \begin{subfigure}[b]{0.49\textwidth}
    \centering
    \includegraphics[width=\linewidth]{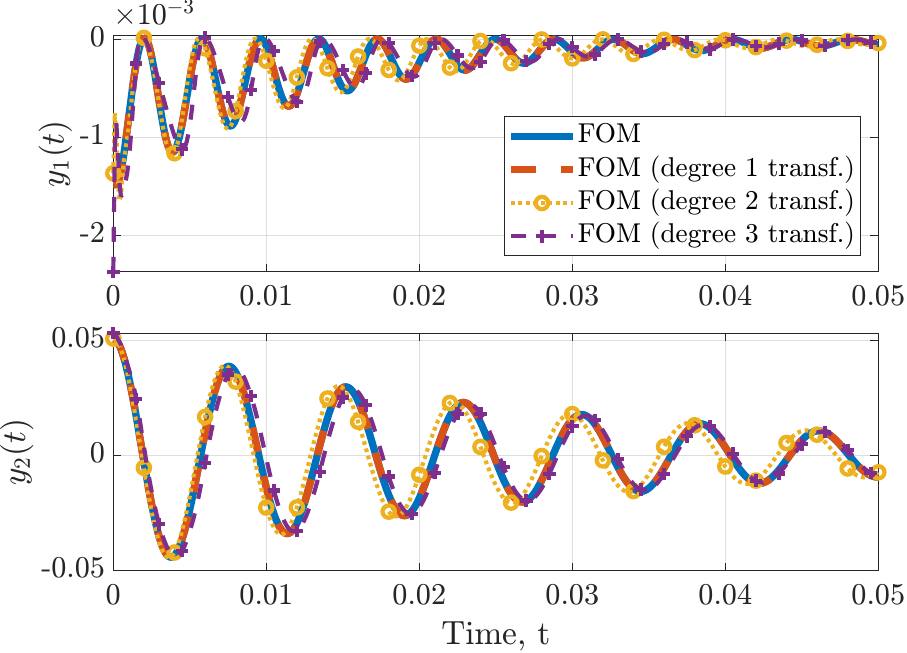}
    \caption{Initial condition 2: FOMs.}
    \label{fig:example6_n6_r6_d4_U03000000_y}
  \end{subfigure}
  \hfill
  \begin{subfigure}[b]{0.49\textwidth}
    \centering
    \includegraphics[width=\linewidth]{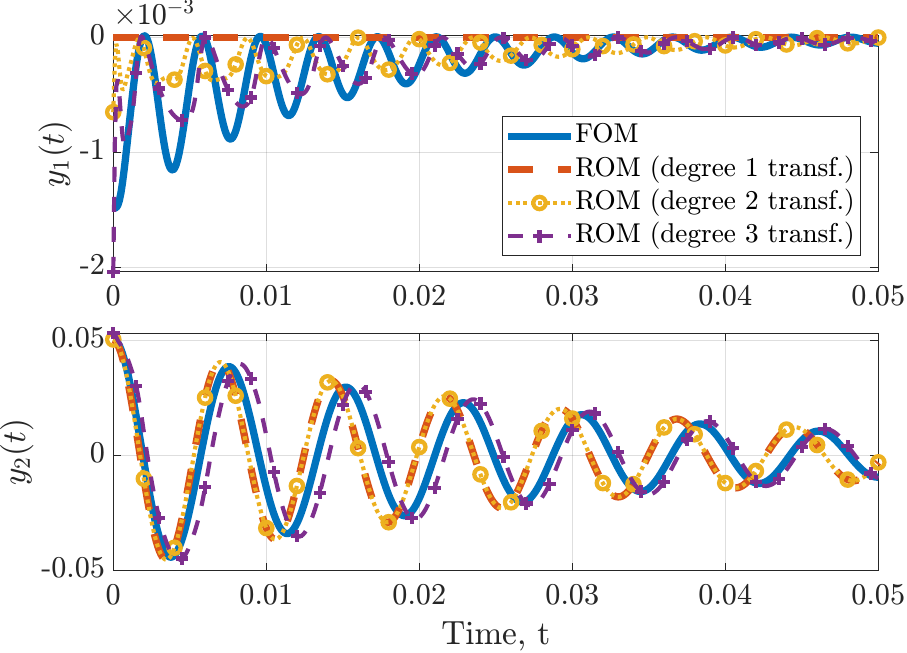}
    \caption{Initial condition 2: FOM vs ROMs.}
    \label{fig:example6_n6_r4_d4_U03000000_y}
  \end{subfigure}
  \caption{Nonlinear cantilever beam: comparison between model outputs for the full-order nonlinear model and various other approximations.
    On the left, the FOM is compared with models obtained by applying linear, quadratic, and cubic balancing transformation approximations \textit{without} truncation.
    On the right, the FOM is compared with the ROMs obtained by truncating the last two states from each of the approximate balanced realizations.
    The output $y_1(t)$ represents the longitudinal displacement of the tip and is more challenging to reproduce, whereas all of the models reproduce the transverse deflection modeled by $y_2(t)$.}
  \label{fig:example6_FOMvsROMs}
\end{figure}

Looking first at \cref{fig:example6_n6_r6_d4_U0100000_y}, we see that for the initial condition closer to the equilibrium point, all three full-order balanced realizations are in agreement.
This is expected, as locally, the nonlinear balancing transformation behaves linearly, and prior to truncation, the balancing transformation should be bijective and should simply represent a coordinate transformation without any introduction of error.
In \cref{fig:example6_n6_r4_d4_U0100000_y}, we see that once states are truncated, the results begin to differ.
As shown previously in \cref{eq:linear-output}, the linear balancing transformation fails to encode the coupling between transverse and longitudinal deflections, so truncating the last two states leads to the output $y_1$ becoming zero.
The nonlinear transformations, which encode some information about the coupling of the dynamics, result in truncated ROMs that still produce an output mirroring the full-order output.

As the initial condition gets further from the origin, we expect to see some differentiation between the truncated linear balancing transformation and the nonlinear approximations of different degrees.
As can be seen in \cref{fig:example6_n6_r6_d4_U03000000_y}, the full-order balanced realizations begin to exhibit some disagreement, particularly in the initial output corresponding to the initial condition.
Unlike linear coordinate transformations, the higher-order polynomial coordinate transformations introduce additional errors even before truncation of states.
First, we always truncate the transformed dynamics to degree 3; thus, for the linear transformation, the dynamics remain exactly degree 3 and do not incur any truncation errors.
However, a cubic transformation of degree 3 dynamics would result in terms up to degree 9, so truncating these higher-order terms can introduce some error further away from the origin.
Second, the polynomial transformations involve more computations which can produce additional rounding errors.
Lastly, and most significantly, for any particular transformation,
translating the initial condition from the original coordinates to the balanced coordinates involves approximating the inverse of the balancing transformation \cref{eq:inverse-balancing-transformation}.
For linear transformations, the exact inverse transformation is readily obtained via the inverse of the linear transformation matrix.
However, the exact inverse of a polynomial transformation is not so readily obtained, nor is it even guaranteed to exist globally.
The approximation we resort to in \cref{thm:poly-inverse} is inherently local, so as we depart from the region of validity of the approximation, we can expect initial condition errors like the ones seen in \cref{fig:example6_n6_r6_d4_U03000000_y}.
When these errors begin to appear, it can be taken as an indication that the balanced realization is near the boundary of its region of validity, and consequently the ROM may not be trustworthy.
In both the FOM simulations in \cref{fig:example6_n6_r6_d4_U03000000_y} and the ROM simulations in \cref{fig:example6_n6_r4_d4_U03000000_y}, we see that the quadratic transformation approximation performs slightly worse than the cubic transformation.

For even larger initial conditions, the stability of the FOM may not even be preserved, as the energy function approximations and balancing transformation approximations all can lie outside of their regions of convergence and lose any rigorous justification.
This represents the major drawback of nonlinear balancing: while improved accuracy may be obtained in a neighborhood of the expansion point at the equilibrium, this comes at the cost of potentially large errors outside of the unknown region of validity of the approximations.

\subsubsection{Scalability with respect to mesh refinement}
In \cref{tab:example6_cpu-scaling,fig:example6_cpu-scaling}, we illustrate the numerical scalability of the proposed approach by scaling up the number of finite elements in the same $T=0.05$ second simulation scenarios shown previously.
In \cite{Corbin2025a,Corbin2025b}, we rigorously proved the computational complexity of the efficient Kronecker-based algorithms
for computing the energy functions and the input-normal/output-diagonal transformation, and
the computations in the final step of obtaining the balanced realization are of the same degree of complexity.
The balanced realization computations do, however, require additional computer memory, so while we were able to compute energy function approximations into the thousands of states in \cite{Corbin2025a}, here we are only able to go into the high hundreds of states, with $n=768$ being the highest for degree~3 energy function approximations and a quadratic balancing transformation approximation, and $n=96$ for degree~4 energy function approximations and a cubic balancing transformation.
Since all of the ROMs computed are nonlinear ROMs with cubic dynamics, one of the most expensive steps is actually applying the balancing transformation to the model operators to obtain the balanced realization expressions.
This step scales with respect to the degree \textit{of the dynamics} in addition to the degree of the transformation, so we see similar scaling regardless of whether a linear, quadratic, or cubic transformation was used to obtain the ROM in \cref{sfig:ROM-CPU-time}.
While the scaling is similar, the overall complexity increases significantly with respect to the polynomial degree.

In \cref{sfig:FOM-ROM-scaling}, we compare the FOM simulation time with each ROM.
The FOM was only simulated up to order $n=48$, as the simulations became too lengthy for larger model sizes.
We see that, as expected, once it comes to online cost, all of the ROMs are significantly faster.
Furthermore, since they are all of order $r=6$ and all feature up to cubic nonlinearities, the simulation times are almost identical, regardless of whether linear, quadratic, or cubic balancing was used in the offline stage.

\pgfplotstableread[col sep=&]{figures/example6_balancingscaling_d3.dat}\tableDataA
\pgfplotstableread[col sep=&]{figures/example6_balancingscaling_d4.dat}\tableDataB
\pgfplotstablecreatecol[copy column from table={\tableDataB}{Balancing CPU-sec}]{Balancing CPU-sec-d4}{\tableDataA}
\pgfplotstablecreatecol[copy column from table={\tableDataB}{ROM Sim. CPU-sec}]{ROM Sim. CPU-sec-d4}{\tableDataA}

\begin{table}[h]
  \centering
  \caption{Nonlinear cantilever beam: scalability results showing the time (in seconds) to simulate the FOM, the time to compute the ROMs, and the time to simulate the ROMs for quadratic and cubic balancing approximations for $T=0.05$ seconds of simulation.}
  \label{tab:example6_cpu-scaling}
  \pgfplotstabletypeset[
  col sep = &,
  every head row/.style={
      before row={
          \toprule
          & & \multicolumn{2}{c}{$d=3$} & \multicolumn{2}{c}{$d=4$} \\ \cmidrule(lr){3-4} \cmidrule(lr){5-6}
        },
      after row=\midrule
    },
  every last row/.style={after row=\bottomrule},
  every column/.style={column type={l}},
  assign column name/.style={/pgfplots/table/column name=\makecell{#1}},
  columns/n/.style={column name=$n$,column type={c}},
  columns/n^4/.style={column name=$n^4$, sci, sci zerofill, precision=2},
  columns/{FOM Sim. CPU-sec}/.style={column name={FOM Sim. \\CPU Time}, sci, sci zerofill, precision=2},
  columns/{Balancing CPU-sec}/.style={column name={Balancing \\CPU Time}, sci, sci zerofill, precision=2},
  columns/{ROM Sim. CPU-sec}/.style={column name={ROM Sim. \\CPU Time}, sci, sci zerofill, precision=2},
  columns/{Balancing CPU-sec-d4}/.style={column name={Balancing \\CPU Time}, sci, sci zerofill, precision=2},
  columns/{ROM Sim. CPU-sec-d4}/.style={column name={ROM Sim. \\CPU Time}, sci, sci zerofill, precision=2},
  columns={{n},{FOM Sim. CPU-sec},{Balancing CPU-sec},{ROM Sim. CPU-sec},{Balancing CPU-sec-d4},{ROM Sim. CPU-sec-d4}},
  ]{\tableDataA}
\end{table}

\pgfdeclareplotmark{emptyhalfcircle}{
  \pgfpathcircle{\pgfpointorigin}{\pgfplotmarksize}
  \pgfusepathqstroke
  \pgfpathmoveto{\pgfpoint{\pgfplotmarksize}{0pt}}
  \pgfpatharc{0}{-180}{\pgfplotmarksize}
  \pgfpathclose
  \pgfusepathqfill
}

\begin{figure}[htb]
  \centering
  \begin{subfigure}[h]{0.485\textwidth}
    \centering
    \begin{tikzpicture}
      \begin{loglogaxis}[
          xlabel={$n$},
          ylabel={CPU sec},ylabel style={yshift=-5pt},
          width=\columnwidth, height=4.5cm,
          xmin=4, xmax=1.2e3,
          ymin=1e-3, ymax=1e4,
          legend style={draw=none,font=\scriptsize\sffamily,legend columns=2,legend to name=legendposcustom1},
          xlabel style={font=\small\sffamily},
          ylabel style={font=\small\sffamily},
        ]
        \addplot[domain=5:2e3, samples=10, color=lightgray, thick, forget plot] {6*10^(-8) * x^3} node [pos=.8, fill=white, sloped] {$n^{3}$};
        \addplot[domain=5:6e2, samples=10, color=lightgray, thick, forget plot] {6*10^(-8) * x^4} node [pos=.8, fill=white, sloped] {$n^{4}$};
        \addplot[domain=5:6e2, samples=10, color=lightgray, thick, forget plot] {10^(-7) * x^5} node [pos=.6, fill=white, sloped] {$n^{5}$};

        \addplot[color=Paired-B,mark=*,mark size=2] table[x={n}, y={Balancing CPU-sec},col sep=&] {figures/example6_balancingScaling_d2.dat};
        \addlegendentry{\makecell{ROM (deg. 1 \\ bal. transf.)}}

        \addplot[color=Paired-A,mark=halfcircle*,mark size=2,mark options={solid, rotate=180}] table[x={n}, y={Balancing CPU-sec},col sep=&] {figures/example6_balancingScaling_d3.dat};
        \addlegendentry{\makecell{ROM (deg. 2 \\ bal. transf.)}}

        \addplot[color=Paired-F,mark=star,mark size=2] table[x={n}, y={Balancing CPU-sec},col sep=&] {figures/example6_balancingScaling_d4.dat};
        \addlegendentry{\makecell{ROM (deg. 3 \\ bal. transf.)}}

      \end{loglogaxis}
      \node[anchor=south] at (current axis.above north) {\ref*{legendposcustom1}};
    \end{tikzpicture}
    \caption{Balanced ROM construction CPU time scaling.}
    \label{sfig:ROM-CPU-time}
  \end{subfigure}
  \hfill
  \begin{subfigure}[h]{0.485\textwidth}
    \centering
    \vspace{9pt}
    \begin{tikzpicture}
      \begin{loglogaxis}[
          xlabel={$n$},
          ylabel={CPU sec},ylabel style={yshift=-5pt},
          width=\columnwidth, height=4.5cm,
          xmin=4, xmax=1.2e3, ymin=1e-3, ymax=1e4,axis on top,
          ytickten={-3,0,3},
          legend style={legend columns=2,legend to name=legendposcustom,
              draw=none,fill=none,font=\sffamily\scriptsize,cells={align=center}},
          xlabel style={font=\small\sffamily},
          ylabel style={font=\small\sffamily},
        ]
        \addplot[domain=5:2e3, samples=10, color=lightgray, thick, forget plot] {6*10^(-8) * x^3} node [pos=.8, fill=white, sloped] {$n^{3}$};
        \addplot[domain=5:6e2, samples=10, color=lightgray, thick, forget plot] {6*10^(-8) * x^4} node [pos=.8, fill=white, sloped] {$n^{4}$};
        \addplot[domain=5:6e2, samples=10, color=lightgray, thick, forget plot] {10^(-7) * x^5} node [pos=.6, fill=white, sloped] {$n^{5}$};

        \addplot[thick,color=black,mark=*,mark size=2] table[x index=1, y index=4,col sep=&] {figures/example6_balancingScaling_d3.dat};
        \addlegendentry{FOM}

        \addplot[color=Paired-B,mark=*,mark size=2] table[x index=1, y index=5,col sep=&] {figures/example6_balancingScaling_d2.dat};
        \addlegendentry{\makecell{ROM (deg. 1 \\ bal. transf.)}}

        \addplot[color=Paired-A,mark=emptyhalfcircle,mark size=2] table[x index=1, y index=5,col sep=&] {figures/example6_balancingScaling_d3.dat};
        \addlegendentry{\makecell{ROM (deg. 2 \\ bal. transf.)}}

        \addplot[color=Paired-F,mark=star,mark size=2] table[x index=1, y index=5,col sep=&] {figures/example6_balancingScaling_d4.dat};
        \addlegendentry{\makecell{ROM (deg. 3 \\ bal. transf.)}}

      \end{loglogaxis}
      \node[anchor=south] at (current axis.above north) {\ref*{legendposcustom}};

    \end{tikzpicture}
    \caption{FOM vs. ROM simulation CPU time scaling. }
    \label{sfig:FOM-ROM-scaling}
  \end{subfigure}

  \caption{Nonlinear cantilever beam: CPU time scaling for computing the balanced ROMs and simulating the FOM and ROMs.
    On the left, we see that the computational scaling of computing the balanced ROMs scales roughly as $O(dn^{d+1})$
    Since all of the ROMs are of order $r=6$, they all scale roughly as the FOMs of order $n=6$. Furthermore, the degree of balancing transformation used does not contribute significantly to the scaling of simulating the model, as the models are all truncated to retain cubic dynamics.
  }
  \label{fig:example6_cpu-scaling}
\end{figure}
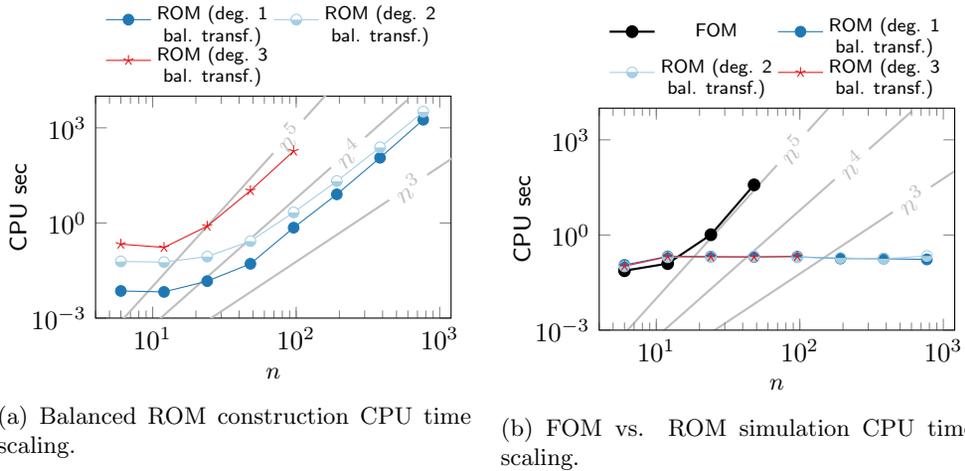

The maximal state dimension considered here of $n=768$ is larger than the largest example considered in \cite{Corbin2025b}, where we stopped at around $n=50$ due to warnings of ill-conditioning.
The same warnings arose in the examples considered herein, but we did not observe a corresponding impact on the ROM performance.
As before, we continued to see ROM performance degrade when solution trajectories departed too far from the local region near the origin, be it through a large initial condition or through strong forcing.
In light of the rest of our results, we present these scalability results not necessarily to recommend applying these methods to practical examples of these state dimensions, but rather to illustrate the full picture that
the requirement of nonzero, distinct Hankel singular values---in the algorithms, but also in the theory itself---is the main obstacle for usability of nonlinear balancing, rather than computational effort.

\section{Conclusions}\label{sec:conclusions}
We have developed new mathematical results and algorithms based on Taylor-series expansions to enable scalable, general purpose implementation of the theory of nonlinear balanced truncation.
Building on our previous works for computing polynomial approximations to the energy functions and the input-normal/output-diagonal transformation \cite{Corbin2025a,Corbin2025b}, in this work, we carry out the additional approximations necessary to fully compute the balancing transformation.
We then derive explicit formulas for the polynomial approximations of the dynamics in the balanced realization.
Upon truncating the transformation, this yields explicit nonlinear reduced-order models.
In our numerical examples, we illustrated the scalability of the approach to systems with moderate state dimensions in the hundreds.
We provide the general purpose \textsc{Matlab} implementation in the \texttt{cnick1/NLbalancing} repository \cite{Corbin2025c}, for the first time enabling the application of nonlinear balancing model order reduction to general problems.

Our results paint a nuanced picture of the benefits and limitations of nonlinear balancing.
We illustrated on low-dimensional examples that while the linear balancing transformation represents a uniform rotation and stretching of the state space, the nonlinear transformation introduces additional curvature and warping of the state space that, in theory, more faithfully captures the dynamics by projecting onto a curved manifold near the expansion point.
Through subsequent examples, we highlighted challenges that arise in accurately capturing this curved manifold.
In particular, we demonstrated that while linear transformations are valid globally, the nonlinear ones are restricted to a neighborhood of the origin, making the balanced realization valid only locally \textit{even prior to truncation}.
This local restriction of the balanced realization makes it difficult to find cases where the nonlinear theory demonstrates marked improvement over the linear theory, which itself is by definition a good approximation of the nonlinear theory locally.
The nonlinear beam represented the best result we were able to construct, in which the linearly balanced ROM failed to capture nonlinear dynamic coupling in the states, leading to a complete loss of information on some outputs of interest.
Conversely, many of the other examples we considered, such as the double pendulum, showed nearly indistinguishable results when comparing linearly vs nonlinearly balanced ROMs, making it difficult to justify the additional complexity of the nonlinear method.

The algorithms proposed in this work represent a balance-\textit{then}-truncate approach that, similar to the case in linear balancing, is ill-conditioned in the case of a minimal or nearly minimal realization featuring zero or near-zero Hankel singular values.
This presents a major limitation, as zero and near-zero singular values are what indicate reducibility.
In our testing, we found that model parameters and, in particular, control and measurement locations had to be artificially chosen in order to ensure that singular values were nonzero and distinct in order to avoid numerical issues.
The development of a balance-\textit{and}-reduce approach akin to square-root balancing in the linear case could enable nonlinear balancing of non-minimal systems in a way that alleviates this numerical ill-conditioning.
Similar to the linear case, this could also open the door to a low-rank adaptation to enable scalability to very high dimensions.

\appendix

\section{Implicit balanced realization via Newton iterations}\label{sec:Newton-balancing}
In this section, we describe an alternative approach for evaluating the nonlinear balanced realization described in \cref{sec:balanced-realization}.
We take the same starting point as \cref{sec:polynomial-balancing} of the main text:
using the methods described in
\cite{Corbin2025b},
we assume access to polynomial approximations for the input-normal/output-diagonal transformation $\bx = \bPhi(\bz)$ and the squared singular value functions $\bsigma^2(\bz)$.
It is possible to evaluate the balanced realization with just these quantities and without applying additional polynomial approximations,
although it requires the use of Newton iterations.
To simulate the balanced realization \cref{eq:FOM-balanced}, we need to evaluate:
\begin{enumerate}
  \item The inverse transformation $\bar\bz_0 = \bar\bPhi^{-1}(\bx_0)$,
        e.g. to find the initial condition $\bar\bz_0$ in the transformed coordinates corresponding to $\bx_0$.
  \item The forward transformation $\bx = \bar\bPhi(\bar\bz)$,
        e.g. to find the $\bx$ corresponding to $\bar\bz$ at each timestep to evaluate the dynamics using $\bf(\bx)$, $\bg(\bx)$, and $\bh(\bx)$.
  \item The Jacobian $\frac{\partial \bar\bPhi(\bar\bz)}{\partial \bar\bz}$, e.g. to evaluate
        $ \dot{\bar\bz} = \left[\frac{\partial \bar\bPhi(\bar\bz)}{\partial \bar\bz}\right]^{-1} \dot{\bx}$.
\end{enumerate}
In the following subsections, we will break down how these tasks are carried out using Newton iterations

\subsection{Evaluating the forward balancing transformation}
The balancing transformation is the composition of two transformations, so we can write it as $\bx = \bar\bPhi(\bar\bz) = \bPhi(\bvarphi(\bar\bz))$, in which case it becomes clear that we must evaluate this in two steps: a) evaluate the scaling transformation $\bz = \bvarphi(\bar\bz)$ (for a given $\bar\bz$), and then b) evaluate the input-normal/output-diagonal transformation $\bx = \bPhi(\bz)$.
The first task is described in \cref{sec:scaling-transformation}, and the second is described in \cref{sec:input-normal-transformation}.

\subsubsection{Evaluating the scaling transformation}\label{sec:scaling-transformation}
The scaling transformation and its inverse, as described in \cref{cor:scaling-1}, are
\begin{align*}
  \bar\bz & = \bvarphi^{-1}(\bz) = \bz \odot \sqrt{\bsigma(\bz)} ,                &
  \bz     & = \bvarphi(\bar\bz) = \bar\bz \odot \sqrt{\bar\bsigma(\bar\bz)}^{-1}.
\end{align*}
Since we only have access to the squared singular value functions in the input-normal/output-diagonal coordinates $\bsigma^2(\bz)$, we can evaluate the inverse scaling transformation $\bvarphi^{-1}(\bz)$ more readily than the scaling transformation $\bvarphi(\bz)$ itself.
The inverse scaling transformation can be evaluated for a fixed $\bz$ straightforwardly by rewriting it as
$\bar\bz = \bvarphi^{-1}(\bz) = \bz \odot ({\bsigma^2(\bz)})^{1/4}$.

Therefore, we must solve the
following nonlinear equation:
\begin{align*}
  f(\bz) \coloneqq \bz \odot \sqrt{\bsigma(\bz)} - \bar\bz = \bzero.
\end{align*}
Solving $f(\bz) = \bzero$ via Newton iteration is done by iterating the equation
\begin{align}\label{eq:Newton-iteration}
  \bz_{n+1} & = \bz_{n} - \bJ(\bz_{n})^{-1} f(\bz_{n}),
\end{align}
so in addition to $f(\bz)$ we need to evaluate the Jacobian $\bJ(\bz) = \frac{\partial f(\bz)}{\partial \bz} $.
The Jacobian for $f(\bz) \coloneqq \bz \odot \sqrt{\bsigma(\bz)} - \bar\bz$ is the same as the Jacobian for $\bvarphi^{-1}(\bz)$, since $\bar\bz$ is fixed.
Knowing $\bvarphi^{-1}(\bz)$, we can compute its Jacobian at $\bz=\bz_n$ by the product rule as
\begin{align}
  \bJ(\bz_n) & = \left.\frac{\partial \bvarphi^{-1}(\bz)}{\partial \bz} \right\vert_{\bz=\bz_n}  = \text{diag} \left(\sqrt{\bsigma(\bz_n)} + \frac{1}{2}\bz_n \odot \sqrt{\bsigma(\bz_n)}^{-1} \odot \bsigma'(\bz_n) \right). \label{eq:jacobian-inverse-scaling}
\end{align}
Since the scaling transformation is decoupled, i.e. the $i$th component of the transformation only depends on the $i$th state, this Jacobian matrix is a diagonal matrix.
All of these quantities are known:
$\bz_n$ is the current iterate, the first choice being some initial guess such as $\bz_1 = \bzero$.
Given $\bz_n$, we can compute $\sqrt{\bsigma(\bz_n)}$ as the fourth root of our polynomial approximation to the squared singular value function $\bsigma^2(\bz_n)$, and $\sqrt{\bsigma(\bz_n)}^{-1}$ is $1/\sqrt{\bsigma(\bz_n)}$ evaluated entry-wise.
Finally, using the available polynomial expansion for $\bsigma^2(\bz_n)$, its derivative  $\frac{\rd}{\rd z} \bsigma^2(\bz_n)$ can be used to evaluate
$\bsigma'(\bz_n)$ as
$\frac{\rd}{\rd z} \bsigma(\bz_n) = \frac{\rd}{\rd z} \sqrt{\bsigma^2(\bz_n)} = \frac{1}{2\bsigma(\bz_n)}\frac{\rd}{\rd z} \bsigma^2(\bz_n)$.
With the Jacobian, the current iterate $\bz_n$ can be used to compute the next iterate $\bz_{n+1}$ via the Newton iteration described by \cref{eq:Newton-iteration}.
Subject to stopping and convergence criteria, this provides a means for evaluating the scaling transformation.

\subsubsection{Evaluating the input-normal/output-diagonal transformation}\label{sec:input-normal-transformation}
After computing $\bz = \bvarphi(\bar\bz)$, evaluating $\bx = \bPhi(\bz)$
can be done efficiently Kronecker product identities,
since the input-normal/output-diagonal transformation coefficients are known explicitly.
Combining the scaling transformation and the input-normal/output-diagonal transformation, the full balancing transformation can be evaluated at each time step to aid in evaluating the balanced realization.

\subsection{Evaluating the inverse balancing transformation}

The approach for evaluating the forward balancing transformation described in the previous section can be used to aid in evaluating the inverse balancing transformation, namely to be able to transform the original initial condition to the balanced coordinates.
Similar to the scaling transformation, we can invert the balancing transformation by solving the following nonlinear equation:
\begin{align*}
  f(\bar\bz) \coloneqq \bar\bPhi(\bar\bz) - \bx = \bzero.
\end{align*}
The solution to this equation is obtained using Newton iteration:
\begin{align}
  \bar\bz_{n+1} & = \bar\bz_{n} - \bJ(\bar\bz_{n})^{-1} f(\bar\bz_{n}). \label{eq:Newton-iteration-2}
\end{align}
The Jacobian for $f(\bar\bz) \coloneqq \bar\bPhi(\bar\bz) - \bx$ is the same as the Jacobian for the balancing transformation $\bar\bPhi(\bar\bz)$, since $\bx$ is fixed.
Hence, the last challenge is that of computing the Jacobian of the balancing transformation, described next in \cref{sec:balancing-jacobian}.

\subsection{Evaluating the Jacobian of the balancing transformation}\label{sec:balancing-jacobian}
The Jacobian of the balancing transformation can be computed via the chain rule as
\begin{align*}
  \bJ(\bar\bz_{n}) & = \frac{\partial \bar\bPhi(\bar\bz)}{\partial \bar\bz} = \frac{\partial \bPhi(\bvarphi(\bar\bz))}{\partial \bar\bz}                                                      = \left[\frac{\partial \bPhi(\bz)}{\partial \bz} \right]_{\bz = \bvarphi(\bar\bz)} \left[\frac{\partial \bvarphi(\bar\bz)}{\partial \bar\bz} \right]_{\bar\bz=\bar\bz}.
\end{align*}
This can be approached in two steps: a) compute the matrix $\left[\frac{\partial \bvarphi(\bar\bz)}{\partial \bar\bz} \right]_{\bar\bz=\bar\bz}$, and then b) compute the matrix $\left[\frac{\partial \bPhi(\bz)}{\partial \bz} \right]_{\bz = \bvarphi(\bar\bz)}$.
The balancing transformation Jacobian is then the product of the two matrices.

\subsubsection{Evaluating the Jacobian of the scaling transformation}
\Cref{eq:jacobian-inverse-scaling} describes how to compute the
Jacobian of the inverse scaling transformation $\left[\frac{\partial \bvarphi^{-1}(\bz)}{\partial \bz} \right]_{\bz = \bvarphi(\bar\bz)}$,
and we can compute $\bz$ given $\bar\bz$ from \cref{sec:scaling-transformation}.
Since the Jacobian of the inverse scaling transformation is the inverse of the Jacobian of the scaling transformation,
the Jacobian of the scaling transformation can be computed as the inverse of the Jacobian of the inverse scaling transformation.
These Jacobians are diagonal matrices, so they are relatively easy to invert; nonetheless, this process still involves Newton iterations.

\subsubsection{Evaluating the Jacobian of the input-normal/output-diagonal transformation}
Once we have $\bz = \bvarphi(\bar\bz)$, evaluating $\left[\frac{\partial \bPhi(\bz)}{\partial \bz} \right]_{\bz = \bvarphi(\bar\bz)}$ is straightforward since the polynomial transformation coefficients are known.
The Jacobian of the input-normal/output-diagonal transformation is given by
\begin{align}
  \left[\frac{\partial \bPhi(\bz)}{\partial \bz} \right]_{\bz = \bvarphi(\bar\bz)} = \bT_1 + 2\bT_2 \left(\bI \otimes \bz \right) + \dots \label{eq:input-normal-jacobian}
\end{align}
which can be evaluated somewhat efficiently using Kronecker product identities.

\subsection{Model order reduction by truncation}
The full-order balanced realization
\cref{eq:FOM-balanced}
is given by the model operators
\cref{eq:balanced-realization-operators},
which are evaluated implicitly via Newton iterations as described in the previous section.
At each time step in the simulation, the balancing transformation $\bx = \bar\bPhi(\bar\bz)$ is evaluated so that the drift and input maps $\bf(\bx)$ and $\bg(\bx)$ can be evaluated, and then the inverse of the Jacobian is evaluated using Newton iterations and used to pre-multiply the state equation to obtain the time derivative of the balanced state $\dot{\bar\bz}$.
The reduced-order balanced realization is given by
\begin{align*}
  \dot{\bar\bz}^a & = \bI_{r\times n}
  \left[\begin{array}{@{}c@{}}\displaystyle\frac{\partial \bar\bPhi\left(\begin{bmatrix} \bar\bz^a \\ \bzero \end{bmatrix}\right)}{\partial \bar\bz} \end{array} \right]
  ^{-1} \bf(\bar\bPhi\left(\begin{bmatrix}
                                 \bar\bz^a \\
                                 \bzero
                               \end{bmatrix}\right)) + \bI_{r\times n}
  \left[\begin{array}{@{}c@{}}\displaystyle\frac{\partial \bar\bPhi\left(\begin{bmatrix} \bar\bz^a \\ \bzero \end{bmatrix}\right)}{\partial \bar\bz} \end{array} \right]
  ^{-1} \bg(\bar\bPhi\left(\begin{bmatrix}
                                 \bar\bz^a \\
                                 \bzero
                               \end{bmatrix}\right)) \bu,           \\
  \by             & = \bh(\bar\bPhi\left(\begin{bmatrix}
                                               \bar\bz^a \\
                                               \bzero
                                             \end{bmatrix}\right)),
\end{align*}
which still retains the implicit structure, since explicit expressions for the balancing transformation and its Jacobian were not derived in this approach.
Among the many inefficiencies of the Newton iteration approach, the lifting-type bottleneck is the most severe, as
evaluation of the dynamics involves the transformation of the reduced state to the full-order state dimension, evaluation of the full-order drift and input maps,
evaluation and inversion of the full-order state-dependent Jacobian,
and then projection back down to the reduced-order dimension.
Even for linear dynamics, this approach does not produce an explicit representation of the reduced-order model.
\begin{algorithm}[h!]
  \caption{Evaluation of the balanced realization via Newton iteration}\label{alg:alg3}
  \begin{algorithmic}[1]
    \Require
    FOM dynamics $\bf(\bx)$, $\bg(\bx)$, $\bh(\bx)$ in \cref{eq:FOM-NL};
    balanced realization state $\bar\bz$ at a particular time step.
    \Ensure
    Balanced realization state derivative $\dot{\bar\bz}$;
    output $\by$.
    \State Evaluate the forward balancing transformation $\bx = \bar\bPhi(\bar\bz)$ to get $\bx$:
    \begin{algsubstates}
      \State Compute $\bz$ given $\bar\bz$ by iterating \cref{eq:Newton-iteration}.
      \State Compute $\bx$ given $\bz$ by evaluating \cref{eq:input-normal}.
    \end{algsubstates}
    \State Compute the Jacobian of the balancing transformation at $\bx$
    \begin{algsubstates}
      \State Compute the Jacobian of the scaling transformation. This can be computed by inverting the Jacobian of the inverse of the scaling transformation \cref{eq:jacobian-inverse-scaling}.
      \State Compute the Jacobian of the input-normal/output-diagonal transformation \cref{eq:input-normal-jacobian}
      \State Multiply the two Jacobians.
    \end{algsubstates}
    \State Evaluate the dynamics using $\bf(\bx)$ and $\bg(\bx)$, then multiply by the inverse of the Jacobian of the balancing transformation to get $\dot{\bar\bz}$.
    \State Evaluate $\bh(\bx)$ in the original coordinates to get the output.
  \end{algorithmic}
\end{algorithm}

\section{Accuracy of the polynomial approach vs. the Newton iteration approach}\label{sec:polynomial-vs-Newton}
On its surface, the polynomial approach in \cref{sec:polynomial-balancing} appears to introduce additional layers of approximation, so it may appear that the Newton iteration approach is more accurate.
However, in this section, we aim to make the argument that both methods exhibit the same degree of accuracy due to the truncation of higher-order terms.

\begin{proposition}
  The polynomial approach proposed in \cref{sec:polynomial-balancing} and the Newton iteration approach proposed in appendix \ref{sec:Newton-balancing} both have errors of $O(\kronF{\bx}{d})$, where $d$ is the degree chosen for the energy function approximation.
\end{proposition}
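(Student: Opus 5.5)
The approach is to trace how the degree-$d$ truncation of the energy functions propagates through each stage of both pipelines. The argument is that neither method can recover information beyond degree $d-1$ in the transformation, because both start from the same truncated data. Throughout, ``error of $O(\kronF{\bx}{d})$'' means the computed quantity agrees with the exact analytic one in all Taylor coefficients of degree less than $d$.

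\textbf{Step 1: the shared starting point.} I would show that both methods begin from the same inputs: the degree-$d$ approximations of $\cE_c$ and $\cE_o$, and the resulting degree-$(d-1)$ approximations of $\bPhi(\bz)$ and $\bsigma^2(\bz)$ from \cite{Corbin2025b}. Because the Hamilton--Jacobi equation \cref{eq:HJ-Equation} and the equation \cref{eq:Nonlin-Lyap} are solved degree by degree, the approximate energy functions match the exact ones through degree $d$. The input-normal/output-diagonal conditions \cref{eq:in-od-definition} are then matched coefficient by coefficient. The degree-$k$ term of $\bPhi$ is determined by energy-function coefficients of degree at most $k+1$. Hence $\bPhi$ and $\bsigma^2$ are exact through degree $d-1$, and the error entering both methods is $O(\kronF{\bz}{d})$.

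\textbf{Step 2: the Newton iteration approach.} The Newton approach evaluates $\bvarphi$, $\bar\bPhi=\bPhi\circ\bvarphi$, its Jacobian, and $\bar\bPhi^{-1}$ to solver tolerance. It therefore computes the exact compositions and inverses of the \emph{truncated} maps. Compare the scaling built from the truncated $\bsigma^2$ with the one built from the exact $\bsigma^2$. The map $z_i\mapsto z_i(\sigma_i^2)^{1/4}$ has nonzero linear coefficient (since $c_0^{(i)}\neq0$), so a degree-$d$ perturbation of its input changes it only at degree $\geq d$. By the inverse function theorem, its local inverse is likewise perturbed only at degree $\geq d$. Composition with $\bPhi$ preserves this order, and so does the Jacobian inverse in \cref{eq:balanced-realization-operators}, up to the shift caused by differentiation. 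So the Newton approach is accurate to $O(\kronF{\bx}{d})$ and no better.

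\textbf{Step 3: the polynomial approach.} Here I would check that each additional truncation discards only terms of degree $\geq d$. This covers the series reversion for $\bA_k$, the composition in \cref{thm:poly-balancing-transformation} via \cref{lem:poly-transformation}, and the coefficient recursions in \cref{thm:poly-balanced-realization,thm:poly-inverse}. The key structural fact is that every recursion is triangular. In \cref{eq:balancing-transformation-coefficients}, $\bar\bT_i$ depends only on $\bT_j,\bA_k$ with $j,k\le i$. In \cref{eq:poly-balanced-drift}, $\bar\bF_k$ depends only on $\bar\bT_i$ with $i\le k$ and on earlier $\bar\bF$. The same holds for $\bP_i$. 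Truncating at degree $d-1$ therefore reproduces exactly the degree-$<d$ Taylor coefficients of the maps that the Newton approach evaluates implicitly. The two methods thus differ from each other, and from the exact maps, only by $O(\kronF{\bx}{d})$. Combining Steps 2 and 3 gives the claim.

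\textbf{Expected obstacle.} The delicate point is the bookkeeping of the degree shift. The Jacobian in \cref{eq:balanced-realization-operators} lowers degree by one, while the HJB-to-transformation step also consumes one degree. To avoid an off-by-one error, I would track the order of validity explicitly at each stage. If it turns out that the balanced drift is only accurate to $O(\kronF{\bx}{d-1})$, the argument remains sound, because it applies equally to both methods, and the claim of \emph{equal} accuracy survives. I would then read the stated order as referring to the transformation $\bar\bPhi$ itself.
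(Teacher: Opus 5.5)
Your proof is correct and rests on the same idea as the paper's: the degree-$d$ truncation of the energy functions caps the accuracy of everything downstream. The execution differs, however.

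The paper assumes the dynamics are given to degree $d-1$ (with $\bg$ to degree $d-2$). It counts degrees through the energy functions, $\bar\bPhi$, and the balanced realization, and argues that computing further terms cannot remove the $O(\kronF{\bx}{d})$ error. It covers the Newton approach only by noting that both methods start from the same truncated data.

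You justify each pipeline separately. For the Newton approach you use the perturbation bound for local inverses (via $c_0^{(i)}\neq 0$) and the order lost under differentiation. For the polynomial approach you use the triangular structure of the series reversion, \cref{eq:balancing-transformation-coefficients}, \cref{eq:poly-balanced-drift}, and the $\bP_i$ recursion. This is what actually shows that the additional truncations of \cref{sec:polynomial-balancing} are harmless and that the two methods agree with each other to $O(\kronF{\bx}{d})$, which the paper only asserts. You should, however, state the paper's hypothesis on the degree of the dynamics, since your Step 1 silently relies on it.

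There is one slip in your bookkeeping, exactly where you anticipated trouble. $\bsigma^2$ is exact only through degree $d-2$, not $d-1$, because the degree-$k$ coefficient of $\sigma_i^2$ is read off from the degree-$(k+2)$ part of $\cE_o\circ\bPhi$. The factor $z_i$ in $\varphi_i^{-1}(z_i)=z_i\,(\sigma_i^2(z_i))^{1/4}$ restores order $d$, so nothing downstream changes.

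Your fallback reading is also unnecessary. The $O(\kronF{\bar\bz}{d-1})$ error in the inverse Jacobian multiplies $\bf(\bar\bPhi(\bar\bz))=O(\bar\bz)$, so $\bar\bf$ is accurate to $O(\kronF{\bar\bz}{d})$. Only $\bar\bg$ loses an order, which is consistent with the paper keeping $\bar\bG_p$ only up to $p=d-2$.
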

\begin{proof}
  Assume a control-affine polynomial system is given of degree $d-1$:
  \begin{align*}
    \dot{\bx} & = \bA\bx + \dots + \bF_{d-1}\kronF{\bx}{d-1} + \left( \bB + \dots + \bG_{d-2} \left(\bI_m \otimes \kronF{\bx}{d-2}\right) \right) \bu(t), \\
    \by       & =  \bC\bx + \dots + \bH_{d-1}\kronF{\bx}{d-1}.
  \end{align*}
  At the first step of balancing, we compute polynomial approximations to the energy functions, which we generally must truncate to a degree $d$:
  \begin{align*}
    \cE_c(\bx) \approx \frac{1}{2} \sum_{i=2}^d \bv_i^\top \kronF{\bx}{i}, \qquad
    \cE_o(\bx) \approx \frac{1}{2} \sum_{i=2}^d \bw_i^\top \kronF{\bx}{i}.
  \end{align*}
  Higher-degree terms in the dynamics do not affect the approximations of the energy functions, which have error of $O(\kronF{\bx}{d+1})$.
  Critically, if higher-degree terms exist in the dynamics but are not included, any energy function approximation of degree higher than $d$ will still retain errors of $O(\kronF{\bx}{d+1})$,
  so continuing to compute higher-order approximations without all of the necessary information does not improve the accuracy from a Taylor approximation perspective.
  Continuing with the approximations, the input-normal/output-diagonal and balancing transformations can be computed to degree $d-1$ as
  \begin{align*}
    \bar\bPhi(\bar\bz) & \approx \bT_1 \bar\bz + \bT_2 \kronF{\bar\bz}{2} + \dots  +  \bT_{d-1} \kronF{\bar\bz}{d-1},
  \end{align*}
  where, again, computing any higher-order terms will not eliminate the $O(\kronF{\bx}{d})$
  error if the corresponding terms in the dynamics were truncated.
  Thus, when we finally reach the balanced realization, we can compute terms up to degree $d-1$ to obtain
  \begin{align*}
    \dot{\bar{\bz}} & = \bar\bA\bar{\bz} + \dots + \bar\bF_{d-1}\kronF{\bar{\bz}}{d-1} + \left( \bar\bB + \dots + \bar\bG_{d-2} \left(\bI_m \otimes \kronF{\bar{\bz}}{d-2}\right) \right) \bu(t), \\
    \by             & =  \bar\bC\bar{\bz} + \dots + \bar\bH_{d-1}\kronF{\bar{\bz}}{d-1}.
  \end{align*}
  and, again, computing any higher-order terms will not eliminate the $O(\kronF{\bx}{d})$ error.

  Now, assume the dynamics are of a fixed polynomial degree $k$ strictly less than $d-1$; in this case, we can continue computing higher-order terms to the energy functions to continue getting a more accurate answer.
  However, once the degree $d$ of the energy functions is fixed, the maximum level of accuracy for the transformations and for the balanced realization is fixed, as a more accurate approximation will always require access to the degree $d+1$ term in the energy function expansions.
  Thus, from the polynomial perspective, the approximation we compute is as good as possible based on the preceding truncated polynomial approximations.

  Therefore, the approximation computed using the polynomial realization approach in the main body of the text and the implicit realization computed using Newton iterations both have errors of $O(\kronF{\bx}{d})$, where $d$ is the degree chosen for the energy function approximation.
  It is possible that different approximations may be better behaved in terms of numerical stability, etc., but the local accuracy of both methods is of the same order.
\end{proof}

\bibliographystyle{abbrv}
\bibliography{references}
\end{document}